\newtheorem{proposition}{Proposition}[section]
\newtheorem{theorem}{Theorem}[section]
\newtheorem{remark}{Remark}[section]
\newtheorem{corollary}{Corollary}[section]
\newtheorem{lemma}{Lemma}[section]
\begin{document}

\author{Nicoleta Aldea and Gheorghe Munteanu}
\title{On two - dimensional complex Finsler manifolds}
\date{}
\maketitle

\begin{abstract}
In this paper, we investigate the two-dimensional complex Finsler manifolds.
The tools of this study are the complex Berwald frames $\{l,$ $m,$ $\bar{l},$
$\bar{m}\}$, $\{\lambda ,\mu ,\bar{\lambda},\bar{\mu}\}$ and the
Chern-Finsler connection with respect to these frames.

The geometry of two-dimensional complex Finsler manifolds is controlled by
three real invariants which live on $T^{\prime }M$: two horizontal curvature
invariants $\mathbf{K}$ and $\mathbf{W}$ and one vertical curvature
invariant $\mathbf{I}.$ By means of these invariants are defined both the
horizontal and the vertical holomorphic sectional curvatures in directions $%
\lambda $, $\mu $ and $m$, respectively.

The complex Landsberg and Berwald spaces are of particular interest. Complex
Berwald spaces coincide with K\"{a}hler spaces, in the two - dimensional
case. We establish the necessary and sufficient condition so that $\mathbf{K}
$ is a constant and we obtain a characterization for the K\"{a}hler purely
Hermitian spaces by the fact $\mathbf{K}=\mathbf{W}=$ constant and $\mathbf{I%
}=0$. For the class of complex Berwald spaces we have $\mathbf{K}=\mathbf{W}%
=0$. Finally, a classification of two - dimensional complex Finsler spaces
for which the horizontal curvature satisfies a special property is obtained.
\end{abstract}

\begin{flushleft}
\strut \textbf{2000 Mathematics Subject Classification:} 53B40, 53C60.

\textbf{Key words and phrases:} Berwald frame, complex Landsberg space, complex Berwald space, holomorphic sectional curvature.
\end{flushleft}

\section{Introduction}

\setlength\arraycolsep{3pt} \setcounter{equation}{0}A great contribution to
the geometry of two-dimensional real Finsler spaces is due to L. Berwald (%
\cite{Bw}, see also \cite{Ma}). His theory is developed based on the choice
of an orthonormal frame consisting of the normalized Liouville field and a
unit field orthogonal to it. Many remarkable results are known for
two-dimensional real Finsler spaces (\cite{Ma,As,B-S,Be,Sh}).

Part of the general themes from real Finsler geometry can be approached in
complex Finsler geometry, the complex setting having the advantage of a
powerful instrument, namely the Chern-Finsler connection (cf. \cite{A-P}).
This connection is Hermitian, of $(1,0)$ - type and with other special
properties, but as we expect, there are some uncomfortable computations on
account of extending the theory to the complexified holomorphic tangent
bundle $T^{\prime }M.$

In a previous paper \cite{Mu-Al}, we constructed the vertical Berwald frame
in which the orthogonality is, with respect to the Hermitian structure,
defined by the fundamental metric tensor of a 2 - dimensional complex
Finsler space, on the holomorphic tangent manifold $T^{\prime }M.$

The main purpose of this paper is to give a partial classification of the 2
- dimensional complex Finsler manifolds using its Chern-Finsler curvatures.
We do not give a general complete classification, but we emphasize some
important particular classes of the $2$ - dimensional complex Finsler spaces.

Subsequently, we have made an overview of the paper's content.

In \S 2, we recall some preliminary properties of the $n$- dimensional
complex Finsler spaces and complete with some others needed.

In \S 3, we prepare the tools for our aforementioned study. After we review
from \cite{Mu-Al} the construction of the Berwald frame of a complex Finsler
manifold of dimension two, we prefer to work in a fixed local chart in which
there is obtained a local complex Berwald frame, which is extended to one on
the horizontal part. We also find the expression of the complex
Chern-Finsler connection with respect to these local frames. The
independence of the obtained results from chosen chart is incessantly
studied.

The local Berwald frames are not only a local geometrical machinery, but
they also satisfy important properties which contain three main real
invariants which live on $T^{\prime }M:$ one vertical curvature invariant $%
\mathbf{I}$ and two horizontal curvature invariants $\mathbf{K}$ and $%
\mathbf{W}.$ By means of these invariants we are able to define and compute
the horizontal and vertical holomorphic sectional curvatures in directions $%
\lambda ,$ $\mu $ and $m$, respectively. A first classification of the
complex Finsler manifold of dimension two comes from the exploration of the $%
v\bar{v}-,$ $h\bar{v}-$ and $v\bar{h}-$ Riemann type tensors, (Theorem 4.2).
An immediate interest for the $2$ - dimensional complex Berwald spaces is
induced by the properties of the $h\bar{v}-$ and $v\bar{h}-$ Riemann type
tensors. We prove that the two dimensional complex Berwald spaces are
reduced to the K\"{a}hler spaces (Theorems 4.3, 4.4). We show that for the
complex Berwald spaces $\mathbf{I}_{|k}=0$, (Proposition 4.5). But this
property is not peculiar only to the complex Berwald spaces. An example of
the $2$ - dimensional complex Finsler metric with $\mathbf{I}_{|k}=0,$ which
is not Berwald, is given by the complex version of the Antonelli-Shimada
metric. The necessary and sufficient conditions for $2$ - dimensional
complex Landsberg spaces are given in Theorem 4.6. Next, we derive the
Bianchi identities which specify the relations among the covariant
derivatives of the three invariants and then use these relations to explore
the holomorphic sectional curvatures. With some additional conditions of
symmetry for the $h\bar{h}-$ Riemann type tensor, we find the necessary and
sufficient conditions that $\mathbf{K}$ should be a constant (Theorems 4.8,
4.9). Moreover, we characterize the spaces with $\mathbf{K}=0$ and $\mathbf{W%
}\leq 0,$ (Theorem 4.11). The complex Berwald spaces with this symmetry have
$\mathbf{K}=\mathbf{W}=0,$ (Theorem 4.12). It results that the K\"{a}hler
purely Hermitian spaces are characterized by $\mathbf{K}=\mathbf{W}=$
constant and $\mathbf{I}=0$, (Theorem 4.13). Finally, a special approach is
devoted to the spaces with the $h\bar{h}-$ Riemann type tensor in the form $%
R_{\bar{r}j\bar{h}k}=\mathcal{K}(g_{j\bar{r}}g_{k\bar{h}}+g_{k\bar{r}}g_{j%
\bar{h}})$. We obtain two classes of such spaces, namely the K\"{a}hler
purely Hermitian with $\mathcal{K}$ a constant and the complex spaces with $%
\mathcal{K}=0$ and $\dot{\partial}_{\bar{h}}G^i=0$, (Theorem 4.15). All
these results are in \S 4.

\section{Preliminaries}

\setcounter{equation}{0} In the beginning, we will make a survey of complex
Finsler geometry and we will set the basic notions and terminology. For
more, see \cite{A-P,Mub}.

Let $M$ be a $n-$dimensional complex manifold, $z=(z^k)_{k=\overline{1,n}}$
are the complex coordinates in a local chart.

The complexified of the real tangent bundle $T_CM$ splits into the sum of
holomorphic tangent bundle $T^{\prime }M$ and its conjugate $T^{\prime
\prime }M$. The bundle $T^{\prime }M$ is itself a complex manifold, and the
local coordinates in a local chart will be denoted by $u=(z^k,\eta ^k)_{k=%
\overline{1,n}}.$ They are changed into $(z^{\prime k},\eta ^{\prime k})_{k=%
\overline{1,n}}$ by the rules $z^{\prime k}=z^{\prime k}(z)$ and $\eta
^{\prime k}=\frac{\partial z^{\prime k}}{\partial z^l}\eta ^l.$

A \textit{complex Finsler space} is a pair $(M,F)$, where $F:T^{\prime
}M\rightarrow \mathbb{R}^{+}$ is a continuous function satisfying the
conditions:

\textit{i)} $L:=F^2$ is smooth on $\widetilde{T^{\prime }M}:=T^{\prime
}M\backslash \{0\};$

\textit{ii)} $F(z,\eta )\geq 0$, the equality holds if and only if $\eta =0;$

\textit{iii)} $F(z,\lambda \eta )=|\lambda |F(z,\eta )$ for $\forall \lambda
\in \mathbb{C}$;

\textit{iv)} the Hermitian matrix $\left( g_{i\bar{j}}(z,\eta )\right) $ is
positive defined, where $g_{i\bar{j}}:=\frac{\partial ^2L}{\partial \eta
^i\partial \bar{\eta}^j}$ is the fundamental metric tensor. Equivalently, it
means that the indicatrix is strongly pseudo-convex.

Consequently, from $iii$) we have $\frac{\partial L}{\partial \eta ^k}\eta
^k=\frac{\partial L}{\partial \bar{\eta}^k}\bar{\eta}^k=L,$ $\frac{\partial
g_{i\bar{j}}}{\partial \eta ^k}\eta ^k=\frac{\partial g_{i\bar{j}}}{\partial
\bar{\eta}^k}\bar{\eta}^k=0$ and $L=g_{i\bar{j}}\eta ^i\bar{\eta}^j.$

Roughly speaking, the geometry of a complex Finsler space consists of the
study of the geometric objects of the complex manifold $T^{\prime }M$
endowed with the Hermitian metric structure defined by $g_{i\bar{j}}.$

Therefore, the first step is to study the sections of the complexified
tangent bundle of $T^{\prime }M,$ which is decomposed in the sum $%
T_C(T^{\prime }M)=T^{\prime }(T^{\prime }M)\oplus T^{\prime \prime
}(T^{\prime }M)$. Let $VT^{\prime }M\subset T^{\prime }(T^{\prime }M)$ be
the vertical bundle, locally spanned by $\{\frac \partial {\partial \eta
^k}\}$, and $VT^{\prime \prime }M$ its conjugate.

At this point, the idea of complex nonlinear connection, briefly $(c.n.c.),$
is an instrument in 'linearization' of this geometry. A $(c.n.c.)$ is a
supplementary complex subbundle to $VT^{\prime }M$ in $T^{\prime }(T^{\prime
}M)$, i.e. $T^{\prime }(T^{\prime }M)=HT^{\prime }M\oplus VT^{\prime }M.$
The horizontal distribution $H_uT^{\prime }M$ is locally spanned by $\{\frac
\delta {\delta z^k}=\frac \partial {\partial z^k}-N_k^j\frac \partial
{\partial \eta ^j}\}$, where $N_k^j(z,\eta )$ are the coefficients of the $%
(c.n.c.)$. The pair $\{\delta _k:=\frac \delta {\delta z^k},\dot{\partial}%
_k:=\frac \partial {\partial \eta ^k}\}$ will be called the adapted frame of
the $(c.n.c.)$ which obey to the change rules $\delta _k=\frac{\partial
z^{\prime j}}{\partial z^k}\delta _j^{\prime }$ and $\dot{\partial}_k=\frac{%
\partial z^{\prime j}}{\partial z^k}\dot{\partial}_j^{\prime }.$ By
conjugation, everywhere is obtained an adapted frame $\{\delta _{\bar{k}},%
\dot{\partial}_{\bar{k}}\}$ on $T_u^{\prime \prime }(T^{\prime }M).$ The
dual adapted bases are $\{dz^k,\delta \eta ^k\}$ and $\{d\bar{z}^k,\delta
\bar{\eta}^k\}.$

Certainly, a main problem in this geometry is to determine a $(c.n.c.)$
related only to the fundamental function of the complex Finsler space $%
(M,F). $

The next step is the action of a derivative law $D$ on the sections of $%
T_C(T^{\prime }M).$ First, let us consider the \textit{Sasaki} type lift of
the metric tensor $g_{i\bar{j}},$%
\begin{equation}
\mathcal{G}=g_{i\bar{j}}dz^i\otimes d\bar{z}^j+g_{i\bar{j}}\delta \eta
^i\otimes \delta \bar{\eta}^j.  \label{1.2}
\end{equation}

A Hermitian connection $D$, of $(1,0)-$ type, which satisfies in addition $%
D_{JX}Y=JD_XY,$ for all $X$ horizontal vectors and $J$ the natural complex
structure of the manifold, is the so called Chern-Finsler connection (cf.
\cite{A-P}), in brief $C-F.$ The $C-F$ connection is locally given by the
following coefficients (cf. \cite{Mub}):
\begin{equation}
N_j^k=g^{\overline{m}k}\frac{\partial g_{l\overline{m}}}{\partial z^j}\eta
^l=L_{lj}^k\eta ^l\;;\;L_{jk}^i=g^{\overline{l}i}\delta _kg_{j\overline{l}%
}\;\;;\;C_{jk}^i=g^{\overline{l}i}\dot{\partial}_kg_{j\overline{l}}\;\;;\;L_{%
\overline{j}k}^{\overline{\imath }}=C_{\overline{j}k}^{\overline{\imath }}=0,
\label{1.3}
\end{equation}
where here and further on $\delta _k$ is the adapted frame of the $C-F$ $%
(c.n.c.)$ and $D_{\delta _k}\delta _j=L_{jk}^i\delta _i,$ $D_{\dot{\partial}%
_k}\dot{\partial}_j=C_{jk}^i\dot{\partial}_i,$ etc. The $C-F$ connection is
the main tool in this study.

Denoting by $"\shortmid "$ , $"\mid "$ , $"\bar{\shortmid}"$ and $"\bar{\mid}%
",$ the $h-$, $v-$, $\overline{h}-$, $\overline{v}-$ covariant derivatives
with respect to $C-F$ connection, respectively, for any $X^i$ it results
\begin{eqnarray}
X_{|k}^i &:&=\delta _kX^i+X^lL_{lk}^i\;;\;X^i|_k:=\stackrel{.}{\partial }%
_kX^i+X^lC_{lk}^i;  \label{II.11} \\
X_{|\overline{k}}^i &:&=\delta _{\overline{k}}X^i\;;\;X^i|_{\overline{k}}:=%
\stackrel{.}{\partial }_{\overline{k}}X^i;  \nonumber
\end{eqnarray}
and
\begin{eqnarray}
\eta _{|k}^i &=&\eta _{|\overline{k}}^i=\eta ^i|_{\overline{k}}=0\;;\;\;\eta
^i|_k=\delta _k^i;  \label{1.3.} \\
g_{i\overline{j}|k} &=&g_{i\overline{j}|\overline{k}}=g_{i\overline{j}%
}|_k=g_{i\overline{j}}|_{\overline{k}}=0;  \nonumber \\
(g_{i\overline{j}}\bar{\eta}^j)_{|k} &=&(g_{i\overline{j}}\bar{\eta}^j)_{|%
\bar{k}}=(g_{i\overline{j}}\bar{\eta}^j)|_k=0\;;\;\;(g_{i\overline{j}}\bar{%
\eta}^j)|_{\bar{k}}=g_{i\bar{k}.}  \nonumber
\end{eqnarray}

The nonzero curvatures of the $C-F$ connection are denoted by
\begin{eqnarray*}
R(\delta _h,\delta _{\bar{k}})\delta _j &=&R_{j\bar{k}h}^i\delta _i\;;\;R(%
\dot{\partial}_h,\delta _{\bar{k}})\delta _j=\Xi _{j\bar{k}h}^i\delta
_i\;;\;R(\delta _h,\dot{\partial}_{\bar{k}})\delta _j=P_{j\bar{k}h}^i\delta
_i \\
R(\delta _h,\delta _{\bar{k}})\dot{\partial}_j &=&R_{j\bar{k}h}^i\dot{%
\partial}_i\;;\;R(\dot{\partial}_h,\delta _{\bar{k}})\dot{\partial}_j=\Xi _{j%
\bar{k}h}^i\dot{\partial}_i\;;\;R(\delta _h,\dot{\partial}_{\bar{k}})\dot{%
\partial}_j=P_{j\bar{k}h}^i\dot{\partial}_i \\
R(\dot{\partial}_h,\dot{\partial}_{\bar{k}})\delta _j &=&S_{j\bar{k}%
h}^i\delta _i\;;\;\;R(\dot{\partial}_h,\dot{\partial}_{\bar{k}})\dot{\partial%
}_j=S_{j\bar{k}h}^i\dot{\partial}_i\;,
\end{eqnarray*}
where
\begin{eqnarray}
R_{j\overline{h}k}^i &=&-\delta _{\overline{h}}L_{jk}^i-\delta _{\overline{h}%
}(N_k^l)C_{jl}^i\;\;;\;\Xi _{j\overline{h}k}^i=-\delta _{\overline{h}%
}C_{jk}^i=\Xi _{k\overline{h}j}^i;\;  \label{1.4'} \\
P_{j\overline{h}k}^i &=&-\dot{\partial}_{\overline{h}}L_{jk}^i-\dot{\partial}%
_{\overline{h}}(N_k^l)C_{jl}^i\;\;;\;S_{j\overline{h}k}^i=-\dot{\partial}_{%
\overline{h}}C_{jk}^i=S_{k\overline{h}j}^i.  \nonumber
\end{eqnarray}

Considering the Riemann tensor
\begin{eqnarray}
\mathbf{R}(W,\overline{Z},X,\overline{Y}) &:&=G(R(X,\overline{Y})W,\overline{%
Z}),  \label{1.4''} \\
\mathbf{R}(W,\overline{Z},X,\overline{Y}) &=&\overline{\mathbf{R}(Z,%
\overline{W},Y,\overline{X})}  \nonumber
\end{eqnarray}
for $W,X$, $\overline{Z},\overline{Y}$ horizontal or vertical vectors, it
results the $h\bar{h}-,$ $h\bar{v}-,$ $v\bar{h}-,$ $v\bar{v}-$ Riemann type
tensors: $R_{\bar{j}i\bar{h}k}=g_{l\bar{j}}R_{i\bar{h}k}^l;$ $P_{\bar{j}i%
\bar{h}k}=g_{l\bar{j}}P_{i\bar{h}k}^l;$ $\Xi _{\bar{j}i\bar{h}k}=g_{l\bar{j}%
}\Xi _{i\bar{h}k}^l;$ $S_{\bar{j}i\bar{h}k}=g_{l\bar{j}}S_{i\bar{h}k}^l$,
which have properties $R_{i\overline{j}k\overline{h}}=R_{\overline{j}i%
\overline{h}k}\;\;;\;\Xi _{i\overline{j}k\overline{h}}=P_{\overline{j}i%
\overline{h}k};\;P_{i\overline{j}k\overline{h}}=\Xi _{\overline{j}i\overline{%
h}k}\;\;;\;S_{i\overline{j}k\overline{h}}=S_{\overline{j}i\overline{h}k}=S_{%
\overline{h}i\overline{j}k},$ where $R_{i\overline{j}k\overline{h}}:=%
\overline{R_{\bar{\imath}j\bar{k}h}},$ etc., (see \cite{Mub}, p. 77).

Further on, everywhere the index $0$ means the contraction by $\eta ,$ for
example $R_{0\overline{h}k}^i:=R_{j\overline{h}k}^i\eta ^j$.

\begin{proposition}
i) $R_{0\overline{h}k}^i=-\delta _{\overline{h}}N_k^i\;;\;\;$ $R_{\bar{r}0%
\overline{h}k}=-g_{i\bar{r}}\delta _{\overline{h}}N_k^i;$

ii) $P_{0\overline{h}k}^i=-g^{\bar{m}i}C_{0\bar{m}\bar{h}|k}\;;\;\;P_{\bar{r}%
0\overline{h}k}=-C_{0\bar{r}\bar{h}|k}\;;\;P_{0\overline{0}k}^i=0;$

iii) $\Xi _{j\overline{h}k}^i=-C_{jk|\bar{h}}^i\;;\;$ $S_{j\overline{h}%
k}^i=-C_{jk}^i|_{\bar{h}}\;;\;\Xi _{0\overline{h}k}^i=\Xi _{k\overline{h}%
0}^i=S_{0\overline{h}k}^i=S_{k\overline{h}0}^i\;;$

$\Xi _{\bar{r}j\overline{h}k}=-C_{j\bar{r}k|\bar{h}}\;;\;$ $S_{\bar{r}j%
\overline{h}k}=-C_{j\bar{r}k}|_{\bar{h}},$ where we denoted $C_{j\bar{r}%
k}:=C_{jk}^ig_{i\bar{r}}$ and $C_{r\bar{j}\bar{k}}$ is its conjugate$;$

iv) $C_{l\bar{r}\bar{h}|k}=(\dot{\partial}_{\bar{h}}L_{lk}^i)g_{i\bar{r}}+(%
\dot{\partial}_{\bar{h}}N_k^i)C_{i\bar{r}l};$

v) $C_{l\bar{r}h|k}=(\dot{\partial}_hL_{lk}^i)g_{i\bar{r}};$

vi) $P_{j\overline{h}k}^i-P_{0\overline{h}k}^i|_j-P_{0\overline{h}%
r}^iC_{kj}^r=0.$
\end{proposition}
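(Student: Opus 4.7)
All six assertions will follow from a small toolkit: homogeneity ($\eta^j C_{jk}^i=0$ and $\eta^p\dot\partial_l g_{p\bar m}=0$), metric consistency $\delta_k g_{j\bar l}=L_{jk}^i g_{i\bar l}$ arising from $g_{j\bar l|k}=0$, the flatness-type relations $\delta_{\bar h}\eta^j=\dot\partial_{\bar h}\eta^j=\eta^j{}_{|k}=0$ together with $\delta_k\eta^j=-N_k^j$, and the commutator $[\delta_k,\dot\partial_\alpha]=(\dot\partial_\alpha N_k^s)\dot\partial_s$ for $\alpha\in\{h,\bar h\}$. In addition, I will need the Chern-Finsler identity $\dot\partial_h N_k^i=L_{hk}^i$, equivalent to $\eta^p(\dot\partial_h L_{pk}^i)=0$; the plan is to prove this separately by expanding $\eta^p\dot\partial_h(g^{\bar l i}\delta_k g_{p\bar l})$ and collapsing the result via the commutator $[\delta_k,\dot\partial_h]$, the homogeneity relation $\eta^p C_{p\bar l j}=0$, and $\delta_k\eta^p=-N_k^p$.

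With this toolkit, parts (iv) and (v) reduce to a direct expansion of $C_{l\bar r\alpha|k}=\delta_k C_{l\bar r\alpha}-L_{lk}^i C_{i\bar r\alpha}-\varepsilon L_{\alpha k}^i C_{l\bar r i}$, with $\varepsilon=0$ when $\alpha=\bar h$ and $\varepsilon=1$ when $\alpha=h$. Writing $C_{l\bar r\alpha}=\dot\partial_\alpha g_{l\bar r}$ and commuting $\delta_k$ past $\dot\partial_\alpha$ produces $(\dot\partial_\alpha L_{lk}^i)g_{i\bar r}+(\dot\partial_\alpha N_k^s)C_{l\bar r s}$ together with a term that cancels the first Christoffel correction. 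In (iv) the absence of the $\varepsilon$ correction leaves exactly the two terms in the statement, after the symmetry $C_{l\bar r j}=C_{j\bar r l}$ is used. In (v) the $\varepsilon$ correction contributes the residue $(\dot\partial_h N_k^s-L_{hk}^s)C_{l\bar r s}$, which vanishes by the auxiliary identity.

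For (i), contracting the defining formula of $R_{j\bar h k}^i$ with $\eta^j$ kills the second term by $\eta^j C_{jl}^i=0$, and the first collapses to $-\delta_{\bar h}(L_{jk}^i\eta^j)=-\delta_{\bar h}N_k^i$. For (ii), the analogous contraction yields $P_{0\bar h k}^i=-(\dot\partial_{\bar h}L_{lk}^i)\eta^l$; on the other hand, contracting (iv) with $\eta^l$ and using $\eta^l C_{i\bar r l}=0$ and $\eta^l{}_{|k}=0$ gives $C_{0\bar r\bar h|k}=(\dot\partial_{\bar h}L_{lk}^i)\eta^l g_{i\bar r}$, so raising the index produces $P_{0\bar h k}^i=-g^{\bar m i}C_{0\bar m\bar h|k}$. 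The vanishing $P_{0\bar 0 k}^i=0$ follows because $\bar\eta^h C_{0\bar m\bar h}=0$ by homogeneity and $\bar\eta^h{}_{|k}=0$.

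Part (iii) is formal: since $D$ is of $(1,0)$ type, $L_{\bar j k}^{\bar i}=C_{\bar j k}^{\bar i}=0$, so $|\bar h$ and $|_{\bar h}$ applied to a tensor with only unbarred upper indices reduce to $\delta_{\bar h}$ and $\dot\partial_{\bar h}$ respectively; the identities for $\Xi_{j\bar h k}^i$ and $S_{j\bar h k}^i$ then coincide with their definitions, and the lowered versions follow from $g_{i\bar r|\bar h}=g_{i\bar r}|_{\bar h}=0$. For the chain involving a $0$ subscript, each of $\Xi_{0\bar h k}^i$, $\Xi_{k\bar h 0}^i$, $S_{0\bar h k}^i$, $S_{k\bar h 0}^i$ vanishes, using $\eta^j C_{jk}^i=0$ together with the stated symmetry $\Xi_{j\bar h k}^i=\Xi_{k\bar h j}^i$. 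Finally for (vi), expanding $P_{0\bar h k}^i|_j=\dot\partial_j P_{0\bar h k}^i+C_{lj}^i P_{0\bar h k}^l-C_{kj}^l P_{0\bar h l}^i$ shows that the last correction exactly cancels $P_{0\bar h r}^i C_{kj}^r$; substituting $P_{0\bar h k}^i=-\dot\partial_{\bar h}N_k^i$ and using $\dot\partial_j N_k^i=L_{jk}^i$ then recovers the defining expression of $P_{j\bar h k}^i$. The whole scheme is mechanical once the identity $\dot\partial_h N_k^i=L_{hk}^i$ is in place; establishing that identity and tracking signs consistently through the many contractions is the main obstacle.
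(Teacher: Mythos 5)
Your proposal is correct; every step checks out against the definitions (\ref{1.3}), (\ref{II.11}), (\ref{1.3.}), (\ref{1.4'}) and the homogeneity identities, and it proves the same statements, but it is organized differently from the paper. The paper proves ii) by a direct expansion: it computes $P_{\bar r0\overline hk}=-g_{i\bar r}\dot{\partial}_{\bar h}N_k^i$ explicitly and compares it with a separate expansion of $C_{0\bar r\bar h|k}=\delta_k(C_{l\bar r\bar h}\eta^l)$, while iv) and v) are obtained by differentiating the relation $N_k^ig_{i\bar r}=\frac{\partial g_{j\bar r}}{\partial z^k}\eta^j$ first in $\eta^l$ and then in $\bar\eta^h$ (resp.\ $\eta^h$), quietly invoking the known Chern--Finsler property $\dot{\partial}_jN_k^i=L_{jk}^i$ cited from Munteanu's book. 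You instead prove iv) and v) in one stroke by expanding the $h$-covariant derivative of $C_{l\bar r\bar h}$, resp.\ $C_{l\bar rh}$, and commuting $\delta_k$ past $\dot{\partial}_{\bar h}$, resp.\ $\dot{\partial}_h$, via $[\delta_k,\dot{\partial}_\alpha]=(\dot{\partial}_\alpha N_k^s)\dot{\partial}_s$ and $\delta_kg_{l\bar r}=L_{lk}^ig_{i\bar r}$, then obtain ii) as a corollary of iv) by contraction with $\eta^l$ (using $\eta^l{}_{|k}=0$ and $\eta^lC_{i\bar rl}=0$), and you supply a proof of the auxiliary identity $\dot{\partial}_hN_k^i=L_{hk}^i$ rather than quoting it; your verification of its equivalence with $\eta^p\dot{\partial}_hL_{pk}^i=0$ and the proposed collapse of $\eta^p\dot{\partial}_h(g^{\bar li}\delta_kg_{p\bar l})$ do work. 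Parts i), iii) and vi) are handled exactly as in the paper. The net effect is that your route is more self-contained (the key identity is proved, and ii) is not recomputed separately), at the cost of introducing the commutator formalism, whereas the paper's computations are slightly longer but rely only on differentiating the defining relations; note also that your vanishing of $\Xi_{0\bar hk}^i$, $\Xi_{k\bar h0}^i$, $S_{0\bar hk}^i$, $S_{k\bar h0}^i$ is a (correct) strengthening of the chain of equalities stated in iii).
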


\begin{proof}
i) and iii) results by (\ref{II.11}), (\ref{1.3.}), (\ref{1.4'}) and $C_{0k}^i=C_{k0}^i=0$.

For ii) we have

$P_{\bar{r}0\overline{h}k}=g_{i\bar{r}}P_{0\overline{h}k}^i=g_{i\bar{r}}\dot{%
\partial}_{\bar{h}}N_k^i=-g_{i\bar{r}}\dot{\partial}_{\bar{h}}\left( g^{\bar{%
m}i}\frac{\partial g_{j\bar{m}}}{\partial z^k}\eta ^j\right) $

$=g_{i\bar{r}}g^{\bar{m}l}g^{\bar{s}i}\left( \dot{\partial}_{\bar{h}}g_{l%
\bar{s}}\right) \frac{\partial g_{j\bar{m}}}{\partial z^k}\eta ^j-g_{i\bar{r}%
}g^{\bar{m}i}\dot{\partial}_{\bar{h}}\left( \frac{\partial g_{j\bar{m}}}{%
\partial z^k}\right) \eta ^j$

$=g^{\bar{m}l}\left( \dot{\partial}_{\bar{h}}g_{l\bar{r}}\right) \frac{%
\partial g_{j\bar{m}}}{\partial z^k}\eta ^j-\frac \partial {\partial
z^k}\left( \dot{\partial}_{\bar{h}}g_{j\bar{r}}\right) \eta ^j=C_{l\bar{r}%
\bar{h}}N_k^l-\frac \partial {\partial z^k}\left( C_{j\bar{r}\bar{h}}\eta
^j\right) .$

Because $C_{0\overline{rh}}:=C_{l\overline{rh}}\eta ^l$ it leads to

$C_{0\overline{rh}|k}=(C_{l\overline{rh}}\eta ^l)_{|k}=\delta _k(C_{l%
\overline{rh}}\eta ^l)=\frac \partial {\partial z^k}\left( C_{l\overline{rh}%
}\eta ^l\right) -N_k^s\dot{\partial}_s\left( (\dot{\partial}_{\bar{h}}g_{l%
\bar{r}})\eta ^l\right) $

$=\frac \partial {\partial z^k}\left( C_{l\overline{rh}}\eta ^l\right) -N_k^s%
\dot{\partial}_{\bar{h}}\left( (\dot{\partial}_sg_{l\bar{r}})\eta ^l\right)
-N_k^sC_{l\overline{rh}}\delta _s^l=\frac \partial {\partial z^k}\left( C_{l%
\overline{rh}}\eta ^l\right) -N_k^sC_{s\overline{rh}}.$ From here, result
the second relation of ii).$\ $The others immediately result by this.

Now, differentiating $N_k^ig_{i\bar{r}}=\frac{\partial g_{j\bar{r}}}{\partial z^k}\eta ^j$ with respect to $\eta ^l$ yields $L_{lk}^ig_{i\bar{r}}=\frac{\partial g_{l\bar{r}}}{\partial z^k}-N_k^iC_{i\bar{r}l},$ which differentiated by $\bar{\eta}^h$ leads to iv).

Differentiating $L_{lk}^ig_{i\bar{r}}=\frac{\partial g_{l\bar{r}}}{\partial z^k}-N_k^iC_{i\bar{r}l},$ by $\bar{\eta}^h$ it results v).

It is obvious that $P_{0\overline{h}k}^i=-\dot{\partial}_{\overline{h}}N_k^i$. Hence,

$P_{j\overline{h}k}^i=-\dot{\partial}_{\overline{h}}L_{jk}^i-\dot{\partial}_{%
\overline{h}}(N_k^l)C_{jl}^i=-\dot{\partial}_{\overline{h}}(\dot{\partial}%
_jN_k^i)+P_{0\overline{h}k}^lC_{jl}^i$

$=-\dot{\partial}_j(\dot{\partial}_{\overline{h}}N_k^i)+P_{0\overline{h}%
k}^lC_{jl}^i=\dot{\partial}_jP_{0\overline{h}k}^i+P_{0\overline{h}%
k}^lC_{jl}^i$

$=P_{0\overline{h}k}^i|_j+P_{0\overline{h}r}^iC_{kj}^r,$ i.e. vi).
\end{proof}

\begin{proposition}
For any $X\in \Gamma ^0(T^{\prime }M)$ the following properties hold true:

i) $X|_{k|j}-X_{|j}|_k=C_{jk}^iX_{|i};$

ii) $X|_{\bar{k}|j}-X_{|j}|_{\bar{k}}=-P_{0\bar{k}j}^iX|_i.$
\end{proposition}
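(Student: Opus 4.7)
The setup is to treat $X\in\Gamma^{0}(T^{\prime}M)$ as a smooth (scalar) function on $\widetilde{T^{\prime}M}$, so that by (\ref{II.11}) the first covariant derivatives are simply $X_{|j}=\delta_{j}X$ and $X|_{k}=\dot{\partial}_{k}X$. These are $(0,1)$-covectors, so the second covariant derivatives acquire $L$- and $C$-correction terms with the covector sign convention:
\[
(X|_{k})_{|j}=\delta_{j}\dot{\partial}_{k}X-(\dot{\partial}_{i}X)\,L_{kj}^{i},\qquad (X_{|j})|_{k}=\dot{\partial}_{k}\delta_{j}X-(\delta_{i}X)\,C_{jk}^{i},
\]
and analogously with $k$ replaced by $\bar{k}$ in the second case. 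The plan is then to compute each commutator as the Lie bracket of adapted frame fields and to identify the outcome with the tensors on the right-hand sides.

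Part ii) is the easier. Because the Chern-Finsler connection is of $(1,0)$-type, (\ref{1.3}) gives $L_{\bar{k}j}^{\bar{l}}=0$ and, equally, $C_{j\bar{k}}^{i}=0$, so the covector corrections disappear and one is left with $X|_{\bar{k}|j}-X_{|j}|_{\bar{k}}=[\delta_{j},\dot{\partial}_{\bar{k}}]X$. From $\delta_{j}=\partial_{j}-N_{j}^{l}\dot{\partial}_{l}$ a short calculation gives $[\delta_{j},\dot{\partial}_{\bar{k}}]=(\dot{\partial}_{\bar{k}}N_{j}^{l})\dot{\partial}_{l}$. Specializing (\ref{1.4'}) to $j=0$, with $L_{0k}^{i}=N_{k}^{i}$ and $C_{0l}^{i}=0$, yields $P_{0\bar{k}j}^{l}=-\dot{\partial}_{\bar{k}}N_{j}^{l}$, so the commutator equals $-P_{0\bar{k}j}^{l}X|_{l}$, as asserted.

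For part i) the same expansion produces
\[
X|_{k|j}-X_{|j}|_{k}=\bigl(\dot{\partial}_{k}N_{j}^{l}-L_{kj}^{l}\bigr)\dot{\partial}_{l}X+C_{jk}^{i}\,X_{|i},
\]
so everything reduces to the Chern-Finsler identity $\dot{\partial}_{k}N_{j}^{l}=L_{kj}^{l}$. I would derive this exactly as inside the proof of Proposition~2.1~iv): differentiate the relation $N_{k}^{i}g_{i\bar{r}}=(\partial g_{j\bar{r}}/\partial z^{k})\eta^{j}$ with respect to $\eta^{l}$, use $C_{0k}^{i}=0$ to discard the $\frac{\partial}{\partial z^{k}}(C_{j\bar{r}l}\eta^{j})$ contribution, and invoke the symmetry $C_{jk}^{i}=C_{kj}^{i}$ (inherited from $g_{i\bar{j}}=\dot{\partial}_{i}\dot{\partial}_{\bar{j}}L$) to match the right-hand side with $L_{lk}^{i}g_{i\bar{r}}$. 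This cancels the bracketed term and leaves $C_{jk}^{i}X_{|i}$, which is the claimed formula. The only genuinely non-routine step in the whole argument is this last Chern-Finsler identity; everything else is careful bookkeeping of covector signs and of the vanishing mixed pieces of the connection.
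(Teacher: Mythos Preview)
Your proof is correct and follows essentially the same approach as the paper: both compute the Lie brackets $[\delta_j,\dot{\partial}_k]$ and $[\delta_j,\dot{\partial}_{\bar{k}}]$ acting on the scalar $X$, and both rely on the Chern--Finsler identities $\dot{\partial}_k N_j^{\,l}=L_{kj}^{\,l}$ and $\dot{\partial}_{\bar{k}}N_j^{\,l}=-P_{0\bar{k}j}^{\,l}$. The only difference is organizational---the paper writes the bracket two ways and equates them, while you collect terms directly---and the identity $\dot{\partial}_j N_k^i=L_{jk}^i$ is in fact stated outright in the paper (just before Proposition~3.1) rather than needing to be rederived.
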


\begin{proof}
We have

$\left[ \delta _j,\dot{\partial}_k\right] X=L_{kj}^i\left( \dot{\partial}%
_iX\right) =L_{kj}^iX|_i$ and

$\left[ \delta _j,\dot{\partial}_{\bar{k}}\right] X=-P_{0\bar{k}j}^i\dot{%
\partial}_iX=-P_{0\bar{k}j}^iX|_i.$

On the other hand,

$\left[ \delta _j,\dot{\partial}_k\right] X=\delta _j\left( \dot{\partial}%
_kX\right) -\dot{\partial}_k(\delta _jX)=\delta _j\left( X|_k\right) -\dot{%
\partial}_k(X_{|j})$

$=X|_{k|j}+L_{kj}^iX|_i-X_{|j}|_k-C_{jk}^iX_{|i}$ and

$\left[ \delta _j,\dot{\partial}_{\bar{k}}\right] X=\delta _j\left( \dot{%
\partial}_{\bar{k}}X\right) -\dot{\partial}_{\bar{k}}(\delta _jX)=\delta
_j\left( X|_{\bar{k}}\right) -\dot{\partial}_{\bar{k}}(X_{|j})$

$=X|_{\bar{k}|j}-X_{|j}|_{\bar{k}}.$

From the above relations it results i) and ii).
\end{proof}

For the vertical section $\mathcal{L}=\eta ^k\dot{\partial}_k,$ called the
\textit{Liouville} complex field (or the vertical radial vector field in
\cite{A-P}), we consider its horizontal lift $\chi :=\eta ^k\delta _k.$

According to \cite{A-P}, p. 108, \cite{Mub}, p. 81, the horizontal
holomorphic curvature of the complex Finsler space $(M,F)$ in direction $%
\eta $ is
\begin{equation}
K_F(z,\eta )=\frac{2\mathcal{G}(\mathbf{R}(\chi ,\bar{\chi})\chi ,\bar{\chi})%
}{[\mathbf{G}(\chi ,\bar{\chi})]^2}=\frac 2{L^2}\mathcal{G}(\mathbf{R}(\chi ,%
\bar{\chi})\chi ,\bar{\chi}).  \label{1.8}
\end{equation}

Let us recall that in \cite{A-P}'s terminology, the complex Finsler space $%
(M,F)$ is \textit{strongly K\"{a}hler} iff $T_{jk}^i=0,$ \textit{K\"{a}hler}$%
\;$iff $T_{jk}^i\eta ^j=0$ and \textit{weakly K\"{a}hler }iff\textit{\ } $%
g_{i\overline{l}}T_{jk}^i\eta ^j\overline{\eta }^l=0,$ where $%
T_{jk}^i:=L_{jk}^i-L_{kj}^i.$ In \cite{C-S} it is proved that strongly
K\"{a}hler and K\"{a}hler notions actually coincide. We notice that in the
particular case of complex Finsler metrics which come from Hermitian metrics
on $M,$ so-called \textit{purely Hermitian metrics} in \cite{Mub}, (i.e. $%
g_{i\overline{j}}=g_{i\overline{j}}(z)$)$,$ all those nuances of K\"{a}hler
coincide.

It is well known by \cite{A-P,Mub} that the complex geodesics curves are
defined by means of Chern-Finsler $(c.n.c.).$ But this $(c.n.c.)$ derives
from a complex spray if the complex metric only is weakly K\"{a}hler. On the
other hand, its local coefficients $N_j^k=g^{\bar{m}k}\frac{\partial g_{l%
\bar{m}}}{\partial z^j}\eta ^l$, always determine a complex spray with
coefficients $G^i=\frac 12N_j^i\eta ^j.$ Further on, $G^i$ induce a $%
(c.n.c.) $ by $\stackrel{c}{N_j^i}:=\dot{\partial}_jG^i$ called \textit{%
canonical} in \cite{Mub}, where it is proved that it coincides with
Chern-Finsler $(c.n.c.)$ if and only if the complex Finsler metric is
K\"{a}hler. Using canonical $(c.n.c.)$ we associate to it the next complex
linear connections: one of Berwald type
\[
B\Gamma :=\left( \stackrel{c}{N_j^i},\stackrel{B}{L_{jk}^i}:=\dot{\partial}_k%
\stackrel{c}{N_j^i}=\stackrel{B}{L_{kj}^i},\stackrel{B}{L_{j\bar{k}}^i}:=%
\dot{\partial}_{\bar{k}}\stackrel{c}{N_j^i},0,0\right)
\]
and another of Rund type
\[
R\Gamma :=\left( \stackrel{c}{N_j^i},\stackrel{c}{L_{jk}^i}:=\frac 12g^{%
\overline{l}i}(\stackrel{c}{\delta _k}g_{j\overline{l}}+\stackrel{c}{\delta
_j}g_{k\overline{l}}),\stackrel{c}{L_{j\bar{k}}^i}:=\frac 12g^{\overline{l}%
i}(\stackrel{c}{\delta _{\bar{k}}}g_{j\overline{l}}-\stackrel{c}{\delta _{%
\bar{l}}}g_{j\overline{k}}),0,0\right) ,
\]
where $\stackrel{c}{\delta _k}:=\frac \partial {\partial z^k}-\stackrel{c}{%
N_k^j}\dot{\partial}_j.$ $R\Gamma $ is only $h-$ metrical and $B\Gamma $ is
neither $h-$ nor $v-$ metrical, (for more details see \cite{Mub}). Note that
$2G^i=N_j^i\eta ^j=\stackrel{c}{N_j^i}\eta ^j=\stackrel{B}{L_{jk}^i}\eta
^j\eta ^k.$ Moreover, in the K\"{a}hler case we have $\stackrel{c}{\delta _k}%
=\delta _k$ and so, $L_{jk}^i=\stackrel{c}{L_{jk}^i}=\stackrel{B}{L_{jk}^i}$
and $\stackrel{c}{L_{j\bar{k}}^i}=0.$

Further on, everywhere in this paper the Berwald and Rund connections will
be specified by a super-index, like above (e.g. $\stackrel{c}{\delta _k}$, $%
\stackrel{B}{L_{jk}^i}$, $\stackrel{c}{L_{jk}^i}$, $X_{\stackrel{B}{|}k}$,
etc.), while for the Chern-Finsler connection will be kept the initial
generic notation without super-index (e.g. $\delta _k$, $L_{jk}^i$, $%
X_{|\;k} $, etc.).

In the real case, a Finsler space is Landsberg if the Berwald and Rund
connections coincide. Nevertheless, in complex Finsler geometry some
differences appear. We speak about complex \textit{Landsberg} space iff $%
\stackrel{B}{L_{jk}^i}=\stackrel{c}{L_{jk}^i}$, and about $G$\textit{\ -
Landsberg} space iff $\stackrel{B}{L_{jk}^i}=\stackrel{c}{L_{jk}^i}$ and the
spray coefficients are holomorphic functions with respect to $\eta $, i.e. $%
\dot{\partial}_{\bar{k}}G^i=0$, (see \cite{Al-Mu1}).

\begin{theorem}
(\cite{Al-Mu1}) Let $(M,F)$ be a $n$ - dimensional complex Finsler space.
Then the following assertions are equivalent:

i) $(M,F)$ is a $G$ - Landsberg space;

ii) $\stackrel{B}{L_{jk}^i}=\stackrel{c}{L_{jk}^i}(z);$

iii) $C_{l\bar{r}h\stackrel{B}{|}0}=0$ and $C_{r\bar{0}h\stackrel{B}{|}\bar{0%
}}=0,$ where $\stackrel{B}{\shortmid }$ is $h-$covariant derivative with
respect to $B\Gamma $ connection.
\end{theorem}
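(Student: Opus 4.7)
The plan is to prove the triangle by the implications $(ii)\Rightarrow(i)$, $(i)\Rightarrow(iii)$, and $(iii)\Rightarrow(ii)$. The key enabling observation, used throughout, is that $\stackrel{B}{L_{jk}^i}$ is the symmetric second $\eta$-derivative of the spray: because $G^i$ is homogeneous of degree $(2,0)$ in $(\eta,\bar{\eta})$ and $\stackrel{c}{N_j^i}\eta^j=2G^i$, Euler's identity yields $\stackrel{c}{N_j^i}=\dot{\partial}_jG^i$, whence $\stackrel{B}{L_{jk}^i}=\dot{\partial}_j\dot{\partial}_kG^i$. Hence $\stackrel{B}{L_{jk}^i}$ is $(0,0)$-homogeneous, and a Liouville-type argument on the compact $\mathbb{P}^{n-1}$ shows that a holomorphic-in-$\eta$ function with this homogeneity is necessarily independent of $\eta$. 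Consequently, $\dot{\partial}_{\bar{h}}G^i=0\Leftrightarrow\stackrel{B}{L_{jk}^i}(z,\eta)=\stackrel{B}{L_{jk}^i}(z)$.

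With this at hand, $(ii)\Rightarrow(i)$ is immediate: hypothesis (ii) contains the Landsberg equality $\stackrel{B}{L_{jk}^i}=\stackrel{c}{L_{jk}^i}$ verbatim, and the $\eta$-independence built into (ii) is equivalent, via the key observation, to $\dot{\partial}_{\bar{h}}G^i=0$. For $(i)\Rightarrow(iii)$: the spray-holomorphicity in (i) combined with the observation gives $\stackrel{B}{L_{jk}^i}=\stackrel{B}{L_{jk}^i}(z)$, so the canonical $(c.n.c.)$ reduces to $\stackrel{c}{N_j^i}(z,\eta)=\stackrel{B}{L_{jk}^i}(z)\eta^k$, linear in $\eta$. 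I would then expand
\[
C_{l\bar{r}h\stackrel{B}{|}k}=\stackrel{c}{\delta_k}C_{l\bar{r}h}-C_{s\bar{r}h}\stackrel{B}{L_{lk}^s}-C_{l\bar{r}s}\stackrel{B}{L_{hk}^s}-C_{l\bar{s}h}\overline{\stackrel{B}{L_{r\bar{k}}^s}}
\]
and its $\bar{h}$-analogue, note that the factors $\dot{\partial}_{\bar{s}}\stackrel{c}{N_j^i}$ and $\dot{\partial}_s\stackrel{B}{L_{jk}^i}$ vanish under (i), and after contracting by $\eta^k$ (respectively by $\bar{\eta}^r,\bar{\eta}^k$), the surviving terms collapse to give the two vanishings in (iii).

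The main obstacle is the reverse implication $(iii)\Rightarrow(ii)$, in which two scalar contracted conditions must recover a fully tensorial statement. My plan is to again write out the full expansions of $C_{l\bar{r}h\stackrel{B}{|}0}$ and $C_{r\bar{0}h\stackrel{B}{|}\bar{0}}$, then exploit the identity $\stackrel{B}{L_{lk}^i}\eta^k=\stackrel{c}{N_l^i}$, the $\eta$-homogeneity of $C_{l\bar{r}h}$ (degree $(0,0)$, giving the Euler relations $\eta^l C_{l\bar{r}h}=\eta^h C_{l\bar{r}h}=0$), and the invertibility of $g_{i\bar{r}}$ to extract from the first hypothesis the Landsberg equality $\stackrel{B}{L_{jk}^i}=\stackrel{c}{L_{jk}^i}$, and from the second the vanishing $\dot{\partial}_{\bar{h}}\stackrel{B}{L_{jk}^i}=0$ (which by the key observation yields $\stackrel{B}{L_{jk}^i}(z)$), thus producing (ii). The delicate step is the final algebraic identification: the contracted scalar hypothesis $C_{l\bar{r}h\stackrel{B}{|}0}=0$ must be upgraded to the full $(l,k,h,i)$-tensorial Landsberg equality, which requires careful use of the symmetry $\stackrel{B}{L_{jk}^i}=\stackrel{B}{L_{kj}^i}$ and of $\dot{\partial}$-differentiation of the contracted identities.
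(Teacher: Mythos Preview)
The paper does not supply its own proof of this theorem: it is quoted verbatim from the external reference \cite{Al-Mu1} (Aldea--Munteanu, \emph{On complex Landsberg and Berwald spaces}), so there is nothing in the present paper against which to compare your attempt.

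That said, your strategy is reasonable and the key lemma is correct: since $2G^i=\stackrel{B}{L_{jk}^i}\eta^j\eta^k$, the equivalence $\dot{\partial}_{\bar h}G^i=0\Leftrightarrow \stackrel{B}{L_{jk}^i}=\stackrel{B}{L_{jk}^i}(z)$ follows exactly as you say (holomorphic in $\eta$, $(0,0)$-homogeneous, hence descending to compact $\mathbb{P}^{n-1}$ and constant along fibres). The implications $(ii)\Rightarrow(i)$ and $(i)\Rightarrow(iii)$ then go through. One small slip: $C_{l\bar r h}=\dot{\partial}_h g_{l\bar r}$ has $(\eta,\bar\eta)$-homogeneity degree $(-1,0)$, not $(0,0)$; however, the Euler relations you actually need, $\eta^l C_{l\bar r h}=\eta^h C_{l\bar r h}=0$, remain valid by the $(0,0)$-homogeneity of $g_{l\bar r}$ and the symmetry $C_{l\bar r h}=C_{h\bar r l}$. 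For $(iii)\Rightarrow(ii)$ you have correctly identified the crux: upgrading the contracted condition $C_{l\bar r h\stackrel{B}{|}0}=0$ to the full Landsberg identity requires $\dot{\partial}_k$-differentiation together with the symmetry $\stackrel{B}{L_{jk}^i}=\stackrel{B}{L_{kj}^i}$ and the Euler relations; this is routine but should be written out, not merely asserted. Also check the connection term acting on the barred index in your expansion of $C_{l\bar r h\stackrel{B}{|}k}$: for the $h$-derivative the relevant coefficient on $\bar r$ is the conjugate of $\stackrel{B}{L_{rk}^s}$ (or zero, depending on the convention adopted in \cite{Al-Mu1}), not $\overline{\stackrel{B}{L_{r\bar k}^s}}$.
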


We note that any complex Finsler space which is K\"{a}hler is Landsberg,
too. So, by replacing the Landsberg condition from definition of the $G$ -
Landsberg space with the K\"{a}hler condition, we have obtained another
class of complex Finsler spaces, called us $G$\textit{- K\"{a}hler}. On the
other hand, keeping with Aikou's work, a complex Finsler space which is
K\"{a}hler and $L_{jk}^i=L_{jk}^i(z)$ is named complex \textit{Berwald}
space. Some tensorial characterizations for these classes of complex Finsler
spaces are contained in the next theorem.

\begin{theorem}
(\cite{Al-Mu1}) Let $(M,F)$ be a $n$ - dimensional complex Finsler space.
Then the following assertions are equivalent:

i) $(M,F)$ is $G$ - K\"{a}hler;

ii) $\stackrel{B}{L_{j\bar{k}}^i}=\stackrel{c}{L_{j\bar{k}}^i}$;

iii) $(M,F)$ is a complex Berwald space;

iv) $(M,F)$ is a K\"{a}hler and either $C_{l\bar{r}h|\bar{k}}=0$ or $C_{l%
\bar{r}h|k}=0$.
\end{theorem}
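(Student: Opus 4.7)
The plan is to prove the equivalences via the cycle (i) $\Leftrightarrow$ (iii) and (iii) $\Leftrightarrow$ (ii), (iv), exploiting throughout that the complex spray $G^i$, being $2$-homogeneous in $\eta$, satisfies $\dot\partial_{\bar k}G^i=0$ if and only if $G^i=\tfrac12\Gamma^i_{jk}(z)\eta^j\eta^k$ is a quadratic polynomial in $\eta$. This polynomial reduction, together with the K\"ahler identifications $N_j^i=\stackrel{c}{N_j^i}$, $L_{jk}^i=\stackrel{B}{L_{jk}^i}$, and $\stackrel{c}{L_{j\bar k}^i}=0$ recorded in the preliminaries, will do most of the work.

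For (i) $\Leftrightarrow$ (iii), assuming (i) I would write the spray in the polynomial form above; then $\stackrel{c}{N_j^i}=\Gamma^i_{jk}(z)\eta^k$, $\stackrel{B}{L_{jk}^i}=\Gamma^i_{jk}(z)$, and the K\"ahler identifications give $L_{jk}^i(z)$, i.e.\ complex Berwald. Conversely, $L_{jk}^i(z)$ in the K\"ahler case yields $G^i=\tfrac12 L_{jl}^i(z)\eta^j\eta^l$, holomorphic in $\eta$. For (iii) $\Leftrightarrow$ (ii), the K\"ahler preliminaries give $\stackrel{c}{L_{j\bar k}^i}=0$, so condition (ii) becomes $\dot\partial_{\bar k}\stackrel{c}{N_j^i}=0$; combined with degree-$1$ homogeneity in $\eta$, $\stackrel{c}{N_j^i}$ must be linear in $\eta$, forcing $L_{jk}^i=\stackrel{B}{L_{jk}^i}$ to depend only on $z$. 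The converse is immediate.

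For (iii) $\Leftrightarrow$ (iv) I would invoke Proposition 2.1 (v): $C_{l\bar r h|k}=(\dot\partial_h L_{lk}^i)g_{i\bar r}$. Nondegeneracy of $g$ turns $C_{l\bar r h|k}=0$ into $\dot\partial_h L_{lk}^i=0$, and the K\"ahler-enforced $(\eta,\bar\eta)$ degree-$0$ homogeneity of $L_{lk}^i$ then forces $L_{jk}^i(z)$. For the alternative $C_{l\bar r h|\bar k}=0$ one uses Proposition 2.1 (iv), $C_{l\bar r\bar h|k}=(\dot\partial_{\bar h}L_{lk}^i)g_{i\bar r}+(\dot\partial_{\bar h}N_k^i)C_{i\bar r l}$, and the reverse implication (iii) $\Rightarrow$ (iv) is again immediate since $L_{jk}^i(z)$ makes both $C$-covariant derivatives vanish.

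The main obstacle is precisely the extra term $(\dot\partial_{\bar h}N_k^i)C_{i\bar r l}$ in the $C_{l\bar r h|\bar k}=0$ branch of (iv): it prevents reading off $\dot\partial_{\bar h}L_{lk}^i=0$ directly. I would clear this by using the K\"ahler form $N_k^i=L_{lk}^i\eta^l$ so that $\dot\partial_{\bar h}N_k^i=(\dot\partial_{\bar h}L_{lk}^i)\eta^l$, producing a closed linear relation in the single unknown tensor $\dot\partial_{\bar h}L_{lk}^i$ whose only solution compatible with the degree-$0$ homogeneity in $(\eta,\bar\eta)$ is zero, yielding $L_{jk}^i(z)$ and closing the cycle back to (i).
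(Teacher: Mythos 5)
The paper itself offers no proof of this theorem: it is imported verbatim from the cited manuscript \cite{Al-Mu1}, so there is nothing in-paper to compare your route against; your proposal has to stand on its own, and as written it has one genuine gap and one step that is too quick. The genuine gap is your treatment of condition ii). Condition ii) is the bare equality $\stackrel{B}{L_{j\bar k}^i}=\stackrel{c}{L_{j\bar k}^i}$, with no K\"ahler hypothesis attached; the whole content of the implication ii) $\Rightarrow$ iii) (or ii) $\Rightarrow$ i)) is that this single equality already forces the K\"ahler property. You begin that direction with ``the K\"ahler preliminaries give $\stackrel{c}{L_{j\bar k}^i}=0$'', i.e.\ you assume exactly what must be derived, so your argument only establishes iii) $\Rightarrow$ ii) and the cycle never closes through ii). A correct proof has to work with the actual expressions $\stackrel{B}{L_{j\bar k}^i}=\dot\partial_{\bar k}\dot\partial_j G^i$ and $\stackrel{c}{L_{j\bar k}^i}=\frac12 g^{\bar l i}(\stackrel{c}{\delta_{\bar k}}g_{j\bar l}-\stackrel{c}{\delta_{\bar l}}g_{j\bar k})$ and extract the (weakly) K\"ahler condition from suitable contractions; nothing of that kind is sketched.

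The second problem is your repeated appeal to homogeneity alone. In the branch $C_{l\bar r h|k}=0$ you get $\dot\partial_h L_{lk}^i=0$ and then claim that ``degree-$0$ homogeneity forces $L_{jk}^i(z)$''; this is false as stated, since there are plenty of nonconstant $(0,0)$-homogeneous functions (e.g.\ $\eta^1\bar\eta^1/L$), so homogeneity does not exclude residual $\bar\eta$-dependence. What does work is the combination anti-holomorphic in $\eta$ plus $0$-homogeneous: such a function descends to a holomorphic function on the compact projectivized fibre and is therefore fibrewise constant (or use a Hartogs-extension argument); the same Liouville-type step is silently used when you assert that $\dot\partial_{\bar k}G^i=0$ makes $G^i$ a quadratic polynomial with $z$-only coefficients. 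Finally, in the branch $C_{l\bar r h|\bar k}=0$ (which is the conjugate of the tensor appearing in Proposition 2.1 iv) — worth saying explicitly), your substitution $\dot\partial_{\bar h}N_k^i=(\dot\partial_{\bar h}L_{lk}^i)\eta^l$ is the right move, but the relation is not closed by homogeneity: contract with $\eta^l$ and use $C_{jl}^i\eta^j=0$ to get first $\dot\partial_{\bar h}N_k^i=0$ and then $\dot\partial_{\bar h}L_{lk}^i=0$. With these repairs the portions i) $\Leftrightarrow$ iii) $\Leftrightarrow$ iv) of your plan go through; the equivalence with ii) still needs an actual argument.
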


From Proposition 2.1 iii) and by $\Xi _{i\overline{j}k\overline{h}}=P_{%
\overline{j}i\overline{h}k}$, a complex Berwald space is a K\"{a}hler space
with either $\Xi _{i\overline{j}k\overline{h}}=0$ or $P_{\overline{j}i%
\overline{h}k}=0.$ Between above classes of complex Finsler spaces we have
the inclusions: complex Berwald space $\subset $ $G$ - Landsberg $\subset $
complex Landsberg space.

\section{The complex Berwald frame}

\setcounter{equation}{0}Let $(M,F)$ be a $2$ - dimensional complex Finsler
space, $(z^k,\eta ^k)_{k=\overline{1,2}}$ be complex coordinates on $%
T^{\prime }M$ and $VT^{\prime }M$ be the vertical bundle spanned by $\{\dot{%
\partial}_k\}.$ Further on, the indices $i,j,k,...$ run over $\{1,2\}.$ Let $%
g_{i\bar{j}}$ be the fundamental metric tensor of the space and $\mathcal{G}$
the Hermitian metric structure (\ref{1.2}), defined on $T_C(T^{\prime }M),$
with respect to the adapted frames of Chern-Finsler $(c.n.c.).$

We set $l:=l^i\dot{\partial}_i$ and its dual form is $\omega =l_i\delta \eta
^i,$ where
\begin{equation}
l^i=\frac 1F\eta ^i\;\;\mbox{and}\;\;l_i=\frac 1Fg_{i\bar{j}}\bar{\eta}%
^j=g_{i\bar{j}}l^{\bar{j}}.  \label{2.1}
\end{equation}

Now, our aim is to construct an orthonormal frame in the vertical bundle $%
VT^{\prime }M$, which is $2$ - dimensional in any point. Therefore, it is
decomposed into $VT^{\prime }M=\{l\}\oplus \{l\}^{\perp },$ where $%
\{l\}^{\perp }$ is spanned by a complex vector $m$. Requiring the
orthogonality condition $\mathcal{G}(l,\bar{m})=0$ and $\mathcal{G}(m,\bar{m}%
)=1$, i.e. $m$ is a unit vector and, using $m_i:=g_{i\bar{j}}m^{\bar{j}},$
the above two conditions get the linear system $\left\{
\begin{array}{c}
l_1m^1+l_2m^2=0 \\
m_1m^1+m_2m^2=1
\end{array}
\right. .$

We try to solve this system following the same technique from \cite{Be} for
real case. Nevertheless, let us pay more attention to this system. Passing
in real coordinates, it contains three real equations with four real
unknowns. So that it doesn't admit an unique solution. Formally, solving
this system as one linear, it is obtained the 'solutions' $m^1=\frac{-l_2}%
\Delta $, $m^2=\frac{l_1}\Delta ,$ $m_1=-\Delta l^2$ and $m_2=\Delta l^1,$
where $\Delta =l_1m_2-l_2m_1,$ which indeed are not completely determined
because $\Delta $ depends on $m_i.$ We can say more about these 'solutions'.
A straightforward computation proves that $|\Delta |=\sqrt{g}$ and $\Delta
^{\prime }=\mathcal{T}\Delta $ under a change of the local coordinates $%
(z^k,\eta ^k)_{k=\overline{1,2}}$ into $(z^{\prime k},\eta ^{\prime k})_{k=%
\overline{1,2}}$, where $g:=\det (g_{i\bar{j}})$ and $\mathcal{T}:=\det
\left( \frac{\partial z^i}{\partial z^{\prime j}}\right) $. Therefore, a
natural question is if there exists at least $\Delta $ with the above
mentioned properties. The answer will come below, when we find two distinct
particular solutions for $\Delta .$

Subsequently, our statement will be made for a fixed choice of $\Delta $ and
then $\{l,m,\bar{l},\bar{m}\}$ with
\begin{equation}
m=\frac 1\Delta (-l_2\dot{\partial}_1+l_1\dot{\partial}_2)  \label{2.1'}
\end{equation}
will be called the \textit{complex Berwald frame}. Surely, the dependence of
the chosen for $\Delta $ will be analyzed everywhere.

But when we work in a fixed local chart, we can choose $\Delta =\sqrt{g},$
i.e. $\Delta $ is real, which produces the unique solutions $m^1=\frac{-l_2}{%
\sqrt{g}},$ $m^2=\frac{l_1}{\sqrt{g}},$ $m_1=-\sqrt{g}l^2$ and $m_2=\sqrt{g}%
l^1.$ Thus, we have
\begin{equation}
m=\frac 1{\sqrt{g}}(-l_2\dot{\partial}_1+l_1\dot{\partial}_2),  \label{2.2}
\end{equation}
in this fixed chart.

Then $\{l,m,\bar{l},\bar{m}\}$, with $m$ given by (\ref{2.2}) will be called
the \textit{local complex Berwald frame} of the space.

Note that (\ref{2.2}) provides only a local frame, because the set of
natural local basis in every chart does not have tensorial character. For
this reason, considering a change of the local coordinates, we obtain
\[
m^{\prime }=\frac{\mathcal{T}}{|\mathcal{T}|}m\;;\;m^{^{\prime }i}=\frac{%
\mathcal{T}}{|\mathcal{T}|}\frac{\partial z^{\prime i}}{\partial z^k}%
m^k\;;\;m_i^{\prime }=\frac{\overline{\mathcal{T}}}{|\mathcal{T}|}\frac{%
\partial z^r}{\partial z^{\prime i}}m_r,
\]
which show that $m$ is not a vector, but it depends on the local change.
Therefore, it will say that $m$ from (\ref{2.2}) is a pseudo-vector.

Although $m$ from (\ref{2.2}) depends on the local changes of the
coordinates, it is very important in our study, in a fixed chart. Certainly,
further on we will be very careful with the global validity of our
assertions. We will see that together with its horizontal extension it gives
rise to some invariants which will characterize two dimensional complex
Finsler spaces. A first and useful remark is that the quantities $m_im^j$, $%
m^im^{\bar{j}},$ $m_im_{\bar{j}}$ and $m_im$ are independent of the chosen
local chart, and hence they have global meaning.

With respect to the local complex Berwald frame, $\dot{\partial}_k$ and $g_{i%
\bar{j}}$ are decomposed as follows
\begin{equation}
\dot{\partial}_i=l_il+m_im\;\;\;\mbox{and hence}\;\;\;g_{i\bar{j}}=l_il_{%
\bar{j}}+m_im_{\bar{j}}.  \label{2.3}
\end{equation}
From here we deduce that
\begin{equation}
C_{jk}^i=g^{\bar{m}i}\dot{\partial}_kg_{j\bar{m}}=Al^im_km_j+Bm^im_km_j,
\label{2.4}
\end{equation}
where we set
\[
A:=m^jm^kl_hC_{kj}^h\;\;;\;\;B:=m_hm^km^jC_{jk}^h.
\]

The dependence of the vertical terms $A$ and $B$ of the local charts is
obvious, $A^{\prime }=\frac{\mathcal{T}^2}{|\mathcal{T}|^2}A\;;\;B^{\prime }=%
\frac{\mathcal{T}}{|\mathcal{T}|}B$. Thus, $A$ and $B$ are not invariants,
but if they are zero in a local chart, then they are zero in any local
chart. Moreover, by means of $A$ and $B$ and setting $\Delta =B\sqrt{g}$
with $|B|^2=1$ or $\Delta =\sqrt{Ag}$ with $|A|^2=1,$ we obtain two
particular solutions for $m$ from (\ref{2.1'}) which certify the existence
of the complex Berwald frames.

Further on, all our work will be with respect to the local complex Berwald
frame, where $m$ is given by (\ref{2.2}).

Therefore, the formulas from Proposition 3.2, in \cite{Mu-Al}, become
\begin{eqnarray}
l(l_{i}) &=&\frac{-1}{2F}l_{i}\;;\;\bar{l}(l_{i})=\frac{1}{2F}%
l_{i}\;;\;\;l(m_{i})=\frac{1}{2F}m_{i}\;;\;\bar{l}(m_{i})=\frac{-1}{2F}m_{i};
\label{2.4'} \\
m(l_{i}) &=&Am_{i}\;;\;\;\bar{m}(l_{i})=\frac{1}{F}m_{i}\;;\;\;m(m_{i})=%
\frac{1}{2}Bm_{i}-\frac{1}{F}l_{i};\;\bar{m}(m_{i})=\frac{1}{2}\bar{B}m_{i};
\nonumber \\
l(l^{i}) &=&\frac{1}{2F}l^{i}\;;\;\;\bar{l}(l^{i})=-\frac{1}{2F}%
l^{i}\;;\;\;l(m^{i})=-\frac{1}{2F}m^{i}\;\;;\;\;\bar{l}(m^{i})=\frac{1}{2F}%
m^{i};  \nonumber \\
m(l^{i}) &=&\frac{1}{F}m^{i}\;;\;\;\bar{m}(l^{i})=0\;;\;  \nonumber \\
m(m^{i}) &=&-\frac{1}{2}Bm^{i}-Al^{i}\;\;;\;\;\bar{m}(m^{i})=-\frac{1}{F}%
l^{i}-\frac{1}{2}\bar{B}m^{i}.  \nonumber
\end{eqnarray}

By a direct computation, using the above relations, we obtain formulas for
the vertical covariant derivatives of $l,m,\bar{l}$ and $\bar{m}$ with
respect to the $C-F$ connection
\begin{eqnarray}
l_i|_j &=&\frac{-1}{2F}l_il_j\;+Am_im_j;\;\;l_i|_{\bar{j}}=\frac 1{2F}l_il_{%
\bar{j}}+\frac 1Fm_im_{\bar{j}};  \label{2.4''} \\
m_i|_j &=&\frac 1{2F}m_il_j-\frac 1Fl_im_j-\frac B2m_im_j\;;\;\;m_i|_{\bar{j}%
}=\frac{-1}{2F}m_il_{\bar{j}}+\frac{\bar{B}}2m_im_{\bar{j}};  \nonumber \\
l^i|_j &=&\frac 1F\delta _j^i-\frac 1{2F}l_jl^i\;;\;\;l^i|_{\bar{j}}=\frac{-1%
}{2F}l_{\bar{j}}l^i\;;\;F|_j=\frac 12l_j;\;\;  \nonumber \\
m^i|_j &=&\frac{-1}{2F}l_jm^i+\frac B2m_jm^i\;;\;\;m^i|_{\bar{j}}=\frac
1{2F}l_{\bar{j}}m^i-\frac 1Fm_{\bar{j}}l^i-\frac{\bar{B}}2m_{\bar{j}}m^i,
\nonumber
\end{eqnarray}
and their conjugates.

Moreover, because $\bar{l}(C_{kj}^h)=0$ and $l(C_{kj}^h)=-\frac 1FC_{kj}^h$
by some computation, it results
\begin{eqnarray}
A|_{\bar{h}} &=&\dot{\partial}_{\bar{h}}A=(l_{\bar{h}}\bar{l}+m_{\bar{h}}%
\bar{m})A=\frac{3A}{2F}l_{\bar{h}}+A|_{\bar{s}}m^{\bar{s}}m_{\bar{h}};
\label{2.4'''} \\
B|_{\bar{h}} &=&\dot{\partial}_{\bar{h}}B=(l_{\bar{h}}\bar{l}+m_{\bar{h}}%
\bar{m})B=\frac B{2F}l_{\bar{h}}+B|_{\bar{s}}m^{\bar{s}}m_{\bar{h}};
\nonumber \\
A|_h &=&\dot{\partial}_hA=\left( l_hl+m_hm\right) A=-\frac{5A}{2F}%
l_h+A|_sm^sm_h;  \nonumber \\
B|_h &=&\dot{\partial}_hB=\left( l_hl+m_hm\right) B=-\frac{3B}{2F}%
l_h+B|_sm^sm_h.  \nonumber
\end{eqnarray}

Now, via the natural isomorphism between the bundles $VT^{\prime }M$ and $%
T^{\prime }M$, composed with the horizontal lift of $HT^{\prime }M,$ we
obtain the following orthonormal local frame on $H_CT^{\prime }M,$%
\[
\{\lambda :=l^i\delta _i,\;\;\mu =m^i\delta _i,\;\;\bar{\lambda}:=l^{\bar{\imath}%
}\delta _{\bar{\imath}},\;\;\bar{\mu}=m^{\bar{\imath}}\delta _{\bar{\imath}}\}.
\]

Let $D$ be the $C-F$ connection on $(M,F).$ Further on, let us give an
explicit expression for $C-F$ connection with respect to horizontal local
frame $\{\lambda ,\mu ,\bar{\lambda},\bar{\mu}\}.$ Moreover, using (\ref{2.3}%
) and $L_{jk}^i=g^{\bar{m}i}\delta _kg_{j\bar{m}}$ it results
\begin{eqnarray}
L_{jk}^i &=&Jl^il_jl_k+Ul^im_jl_k+Vl^il_jm_k+Xl^im_jm_k  \label{2.10} \\
&&+Om^il_jl_k+Ym^im_jl_k+Em^il_jm_k+Hm^im_jm_k,  \nonumber
\end{eqnarray}
where we set
\begin{eqnarray}
J
&:&=l^jl^kl_iL_{jk}^i;\;U:=m^jl^kl_iL_{jk}^i;\;V:=l^jm^kl_iL_{jk}^i;%
\;X:=m^jm^kl_iL_{jk}^i  \label{2.11} \\
O
&:&=l^jl^km_iL_{jk}^i;\;Y:=m^jl^km_iL_{jk}^i;\;E:=l^jm^km_iL_{jk}^i;%
\;H:=m^jm^km_iL_{jk}^i.  \nonumber
\end{eqnarray}

Here the horizontal settled quantities do not have tensorial character,
because under the change of charts we have
\begin{eqnarray}
J^{\prime } &=&J+\mathcal{T}_{ab}^rl^al^bl_r\;;\;U^{\prime }=\frac{\mathcal{T%
}}{|\mathcal{T}|}(U+\mathcal{T}_{ab}^rm^al^bl_r)\;;\;  \label{c} \\
V^{\prime } &=&\frac{\mathcal{T}}{|\mathcal{T}|}(V+\mathcal{T}%
_{ab}^rl^am^bl_r)\;;\;X^{\prime }=\frac{\mathcal{T}^2}{|\mathcal{T}|^2}(X+%
\mathcal{T}_{ab}^rm^am^bl_r)\;;  \nonumber \\
O^{\prime } &=&\frac{\overline{\mathcal{T}}}{|\mathcal{T}|}(O+\mathcal{T}%
_{ab}^rl^al^bm_r)\;;\;Y^{\prime }=Y+\mathcal{T}_{ab}^rm^al^bm_r\;;\;
\nonumber \\
E^{\prime } &=&E+\mathcal{T}_{ab}^rl^am^bm_r\;;\;H^{\prime }=\frac{\mathcal{T%
}}{|\mathcal{T}|}(H+\mathcal{T}_{ab}^rm^am^bm_r),  \nonumber
\end{eqnarray}
where $\mathcal{T}_{ab}^r:=\frac{\partial z^{\prime j}}{\partial z^a}\frac{%
\partial z^{\prime k}}{\partial z^b}\frac{\partial ^2z^r}{\partial z^{\prime
j}\partial z^{\prime k}}$.

Firstly, the properties of the $C-F$ connection $N_k^i=L_{jk}^i\eta ^j$ and $%
\dot{\partial}_jN_k^i=L_{jk}^i,$ (see \cite{Mub}), permit us to establish
some links between the vertical and horizontal terms (\ref{2.11}) of this
connection. Indeed,

$N_k^i=F(Jl^il_k+Vl^im_k+Om^il_k+Em^im_k)$ and

$L_{jk}^i=(l_jl+m_jm)[F(Jl^il_k+Vl^im_k+Om^il_k+Em^im_k)]$

$=[\frac 12J+Fl(J)]l^il_jl_k+[Fm(J)-V-FAO]l^im_jl_k+[Fl(V)+\frac
32V]l^il_jm_k$

$+[Fm(V)+FAJ+\frac 12FBV-FAE]l^im_jm_k+[Fl(O)-\frac 12O]m^il_jl_k$

$+[Fm(O)+J-\frac 12FBO-E]m^im_jl_k+[Fl(E)+\frac 12E]m^il_jm_k$

$+[Fm(E)+V+FAO]m^im_jm_k$ which together with (\ref{2.10}) give,

\begin{proposition}
Let $(M,F)$ be a $2$ - dimensional complex Finsler space. Then

i) $J|_k=\frac 1{2F}Jl_k+[\frac 1F(U+V)+AO]m_k;$

ii) $V|_k=-\frac 1{2F}Vl_k+[A(E-J)-\frac 12BV+\frac 1FX]m_k;$

iii) $O|_k=\frac 3{2F}Ol_k+[\frac 1F(E+Y-J)+\frac 12BO]m_k;$

iv) $E|_k=\frac 1{2F}El_k+[\frac 1F(H-V)-AO]m_k.$
\end{proposition}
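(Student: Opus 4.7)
The strategy is to exploit the two key identities for the Chern-Finsler connection, $N_k^i=L_{jk}^i\eta^j$ and $L_{jk}^i=\dot{\partial}_j N_k^i$, together with the Berwald-frame decomposition $\dot{\partial}_j=l_j\,l+m_j\,m$ from (\ref{2.3}). Since $J,V,O,E$ are scalar-type quantities on $T'M$, the vertical covariant derivatives reduce to ordinary vertical derivatives, so
\[
J|_k=\dot{\partial}_k J=l_k\,l(J)+m_k\,m(J),
\]
and analogously for $V|_k$, $O|_k$, $E|_k$. Thus the proof reduces to computing the eight scalars $l(J),m(J),l(V),m(V),l(O),m(O),l(E),m(E)$.

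First I would contract (\ref{2.10}) with $\eta^j=Fl^j$, using $l^j l_j=1$, $l^j m_j=0$ and the symmetry pattern of (\ref{2.11}), to obtain the compact expression $N_k^i=F(Jl^il_k+Vl^im_k+Om^il_k+Em^im_k)$ already recorded in the excerpt. Next, I would apply $\dot{\partial}_j=l_j\,l+m_j\,m$ to this expression for $N_k^i$ and expand using the action formulas (\ref{2.4'}) for how $l,m,\bar{l},\bar{m}$ act on $F$, on the vertical covectors $l_i,m_i$, and on $l^i,m^i$; here the constants $A$ and $B$ enter through $m(l_i)=Am_i$, $m(m_i)=\tfrac12 B m_i-\tfrac1F l_i$, etc. This produces an expression $L_{jk}^i=\dot{\partial}_j N_k^i$ as a sum of the eight basis tensors $l^il_jl_k,\;l^im_jl_k,\dots,m^im_jm_k$ with coefficients that involve $l(J),m(J),\dots,m(E)$ as well as $J,V,O,E$ and $A,B$. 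These are precisely the eight lines displayed between Proposition 3.1 and the stated Proposition.

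Comparing the resulting expansion coefficient-by-coefficient with (\ref{2.10}) gives eight scalar equations which I would solve algebraically for the eight unknowns. For instance, the $l^il_jl_k$-coefficient gives $J=\tfrac12 J+Fl(J)$, hence $l(J)=\tfrac{J}{2F}$; the $l^im_jl_k$-coefficient gives $U=Fm(J)-V-FAO$, hence $m(J)=\tfrac1F(U+V)+AO$; the analogous $l^il_jm_k$, $l^im_jm_k$ entries yield $l(V)=-\tfrac{V}{2F}$ and $m(V)=\tfrac{X}{F}+A(E-J)-\tfrac12 BV$; and the four $m^i\!\cdot\!\cdot$-entries give $l(O)=\tfrac{3O}{2F}$, $m(O)=\tfrac{1}{F}(E+Y-J)+\tfrac12 BO$, $l(E)=\tfrac{E}{2F}$, $m(E)=\tfrac{1}{F}(H-V)-AO$. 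Substituting each pair $(l(\cdot),m(\cdot))$ into $(\cdot)|_k=l_k\,l(\cdot)+m_k\,m(\cdot)$ yields the four claimed formulas.

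The only real difficulty is bookkeeping: one must correctly track all of the terms generated when $l$ and $m$ act through the products $l^i l_k$, $m^i m_k$, etc., and in particular the extra contributions coming from $m(l_i)=Am_i$ and $m(m_i)=\tfrac12 Bm_i-\tfrac1F l_i$, which are the source of the $A$- and $B$-dependent terms in (ii)--(iv). Since this calculation has already been displayed above the statement, the proof is essentially a matter of reading off the eight coefficient equations and rearranging them in the stated form.
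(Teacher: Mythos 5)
Your proposal is correct and is essentially the paper's own argument: the authors obtain the same expansion of $L_{jk}^i=\dot{\partial}_jN_k^i$ in the Berwald frame (using $\dot{\partial}_j=l_jl+m_jm$ and the action formulas (\ref{2.4'})), compare it coefficient-by-coefficient with (\ref{2.10}), and read off $l(J),m(J),\dots,m(E)$ exactly as you do, after which $J|_k=\dot{\partial}_kJ=l_k\,l(J)+m_k\,m(J)$ and its analogues give i)--iv). The only ingredient the paper's proof adds beyond this local computation is a verification, via the transformation rules (\ref{c}), that the identities are independent of the chosen chart (since $J,U,V,X,O,Y,E,H$ and $m$ are only chart-wise defined, non-tensorial objects), a point worth a sentence in your write-up as well.
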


\begin{proof}
In the fixed local chart the assertions i)-iv) are true. We must
prove their global validity. For example, under the change of a local chart,
we have

$V^{\prime }|_k^{\prime }+\frac 1{2F}V^{\prime }l_k^{\prime }-[A^{\prime
}(E^{\prime }-J^{\prime })-\frac 12B^{\prime }V^{\prime }+\frac 1FX^{\prime
}]m_k^{\prime }$

$=\frac{\mathcal{T}}{|\mathcal{T}|}\frac{\partial z^r}{\partial z^{^{\prime
}k}}\{V|_r+\frac 1{2F}Vl_r-[A(E-J)-\frac 12BV+\frac 1FX]m_r\},$ where $%
V^{\prime }|_k^{\prime }:=\dot{\partial}_k^{\prime }V^{\prime }.$

Because $V|_r+\frac 1{2F}Vl_r-[A(E-J)-\frac 12BV+\frac 1FX]m_r=0,$ by its
change rule it results that it is zero in any local chart. Analogous results the geometric character of the others assertions.
\end{proof}

\begin{proposition}
Let $(M,F)$ be a $2$ - dimensional complex Finsler space. Then

i) It is K\"{a}hler if and only if $U=V$ and $Y=E;$

ii) It is weakly K\"{a}hler if and only if $U=V.$
\end{proposition}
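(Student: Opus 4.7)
The plan is to substitute the expansion (2.10) of $L_{jk}^i$ directly into $T_{jk}^i = L_{jk}^i - L_{kj}^i$ and track which terms survive under the antisymmetrization. The terms with coefficients $J, X, O, H$ are of the symmetric form $l_j l_k$, $m_j m_k$, $l_j l_k$, $m_j m_k$ in the lower indices, so they cancel outright. The only surviving contribution comes from the $U,V,Y,E$ terms, which combine into the compact expression
\begin{equation*}
T_{jk}^i = [(U-V)\,l^i + (Y-E)\,m^i](m_j l_k - m_k l_j).
\end{equation*}

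Next I contract with $\eta^j = F l^j$ using the orthonormality of the Berwald frame with respect to $\mathcal{G}$: namely $l_j l^j = \frac{1}{F^2}g_{j\bar r}\eta^j \bar\eta^r = 1$ and $m_j l^j = \mathcal{G}(l,\bar m) = 0$. This yields
\begin{equation*}
T_{jk}^i\,\eta^j = -F\,m_k\,[(U-V)\,l^i + (Y-E)\,m^i].
\end{equation*}
For part (i), the Kähler condition demands this vanish for all $k,i$. Since $m^k m_k = 1$, $m_k$ is not identically zero, and $l^i, m^i$ are linearly independent; thus the vanishing is equivalent to $U=V$ and $Y=E$, with the converse immediate.

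For part (ii), I contract once more with $g_{i\bar l}\bar\eta^l$. The computations $g_{i\bar l}l^i\bar\eta^l = F\,l^i l_i = F$ and $g_{i\bar l}m^i\bar\eta^l = F\,m^i l_i = F\,\mathcal{G}(m,\bar l) = 0$ kill the $(Y-E)$ contribution, leaving $g_{i\bar l}T_{jk}^i\eta^j\bar\eta^l = -F^2(U-V)m_k$, which vanishes iff $U=V$.

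The only delicate point is the global well-posedness of the conclusion, since $U,V,Y,E$ are chart-dependent pseudo-scalars. This is resolved by inspecting (3.c): because $\mathcal{T}_{ab}^r$ is symmetric in $a,b$, the chart-dependent shifts in $U,V$ agree, so $(U-V)' = \frac{\mathcal{T}}{|\mathcal{T}|}(U-V)$; likewise $(Y-E)' = (Y-E)$. Hence the conditions $U=V$ and $Y=E$ transfer unambiguously from one chart to another, matching the intrinsic nature of the (weakly) Kähler property. I do not expect any substantial obstacle beyond the bookkeeping in step one; the main care is simply to argue the non-vanishing of $m_k$ and the linear independence of $\{l^i, m^i\}$ at a generic point to extract both conditions in (i).
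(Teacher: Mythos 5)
Your proposal is correct and follows essentially the same route as the paper: expand $T_{jk}^i=L_{jk}^i-L_{kj}^i$ in the Berwald frame via (2.10), contract with $\eta^j$ (and with $g_{i\bar l}\bar\eta^l$ for the weakly K\"ahler case), and check chart-independence of $U-V$ and $Y-E$ through the transformation rules (\ref{c}). The only minor variation is that you verify part i) directly from the contracted K\"ahler condition $T_{jk}^i\eta^j=0$, whereas the paper reads off the vanishing of the full tensor $T_{jk}^i$; both reduce to comparing the same frame coefficients, so this is a cosmetic difference.
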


\begin{proof}
i) By (\ref{2.10}), $%
L_{jk}^i-L_{kj}^i=(U-V)l^im_jl_k+(V-U)l^il_jm_k+(Y-E)m^im_jl_k+(E-Y)m^il_jm_k.
$ So, $L_{jk}^i-L_{kj}^i=0$ if and only if $U=V$ and $Y=E.$

To prove ii) we compute $g_{i\overline{l}}T_{jk}^i\eta ^j\overline{\eta }%
^l=F^2(L_{jk}^i-L_{kj}^i)l_il^j=F^2(V-U)m_k.$ It results $g_{i\overline{l}%
}T_{jk}^i\eta ^j\overline{\eta }^l=0$ if and only if $U=V$.

Taking into account the local changes of $U-V$ and $Y-E$, it follows the global validity of these statements.
\end{proof}

Further on, several calculus imply the following properties.

\begin{proposition}
With respect to the local Berwald frame, we have:
\begin{eqnarray}
\lambda (l_i) &=&Jl_i+Um_i\;;\;\bar{\lambda}(l_i)=\bar{\lambda}%
(l^i)=0\;;\;\;\lambda (l^i)=-Jl^i-Om^i\;;  \label{2.5} \\
\lambda (m_i) &=&Ol_i-\frac 12(J-Y)m_i\;;\;\bar{\lambda}(m_i)=\frac 12(\bar{J%
}+\bar{Y})m_i\;;\;  \nonumber \\
\lambda (m^i) &=&-Ul^i+\frac 12(J-Y)m^i\;;\;\bar{\lambda}(m^i)=-\frac 12(%
\bar{J}+\bar{Y})m^i\;;  \nonumber \\
\mu (l_i) &=&Vl_i+Xm_i\;;\;\;\bar{\mu}(l_i)=\bar{\mu}(l^i)=0\;;\;\mu
(l^i)=-Vl^i-Em^i\;;  \nonumber \\
\mu (m_i) &=&El_i+\frac 12(H-V)m_i\;;\;\bar{\mu}(m_i)=\frac 12(\bar{V}+\bar{H%
})m_i\;;\;  \nonumber \\
\mu (m^i) &=&-Xl^i-\frac 12(H-V)m^i\;;\;\bar{\mu}(m^i)=-\frac 12(\bar{V}+%
\bar{H})m^i\;;\;  \nonumber \\
\lambda (g) &=&(J+Y)g\;;\;\mu (g)=(V+H)g\;;\;\delta _i=l_i\lambda +m_i\mu
\;;\;\lambda (L)=\mu (L)=0  \nonumber
\end{eqnarray}
and their conjugates.
\end{proposition}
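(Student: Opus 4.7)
The plan is to reduce each of the sixteen formulas to a small collection of master identities for how the horizontal operator $\delta_k$ (and its conjugate $\delta_{\bar k}$) act on the fundamental objects $F$, $\eta^i$, $g_{i\bar j}$, and $g=\det(g_{i\bar j})$, and then to extract the individual statements by contracting with $l^k, m^k, l^{\bar k}, m^{\bar k}$ and using the Berwald-frame duality relations $l^il_i = m^im_i = 1$, $l^im_i = m^il_i = 0$. First I would record the master identities. From $(g_{i\bar j}\bar\eta^j)_{|k}=0$ in (1.3.), contracting with $\eta^i$ gives $\delta_kL=0$, hence $\delta_kF=0$. Together with $\delta_k\eta^i = -N_k^i = -L_{lk}^i\eta^l$ and $\delta_k g_{i\bar j} = g_{l\bar j}L_{ik}^l$ (which is (1.3) read backwards), I obtain
\[
\delta_k l^i = -L_{lk}^i l^l,\qquad \delta_k l_i = L_{ik}^l l_l,\qquad \delta_k g = g\,L_{ik}^i.
\]
The antiholomorphic counterparts $\delta_{\bar k}l^i = \delta_{\bar k}l_i = 0$ follow because $\eta^i$ is holomorphic and because $\delta_{\bar k}g_{i\bar j} = g_{i\bar l}\overline{L_{jk}^l}$ cancels against $\delta_{\bar k}\bar\eta^j = -\overline{N_k^j}$.

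Plugging the decomposition (2.10) into these three master relations and contracting with $l^k$ or $m^k$ collapses most terms by orthonormality and produces the four formulas $\lambda(l_i)$, $\mu(l_i)$, $\lambda(l^i)$, $\mu(l^i)$ exactly as stated. For $g$ the key computation is the trace
\[
L_{ik}^i = (J+Y)l_k + (V+H)m_k,
\]
read off directly from (2.10); this gives $\lambda(g)=(J+Y)g$, $\mu(g)=(V+H)g$, and hence $\lambda(\sqrt g)=\tfrac12(J+Y)\sqrt g$, $\mu(\sqrt g)=\tfrac12(V+H)\sqrt g$.

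For the $m$-formulas I would exploit the explicit relations $m_1=-\sqrt gl^2$, $m_2=\sqrt gl^1$ and their duals $m^1=-l_2/\sqrt g$, $m^2=l_1/\sqrt g$ produced by (2.2). Applying Leibniz and substituting from the previous step furnishes each of the eight $m$-entries; for example
\[
\lambda(m_1) = -\lambda(\sqrt g)\,l^2 - \sqrt g\,\lambda(l^2) = Ol_1 - \tfrac12(J-Y)m_1.
\]
The antiholomorphic cases $\bar\lambda(m_i)$, $\bar\mu(m_i)$, $\bar\lambda(m^i)$, $\bar\mu(m^i)$ are obtained the same way, the only new ingredient being that $\delta_{\bar k}\sqrt g\neq 0$. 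As a cross-check, differentiating the duality relations $l_im^i=0$ and $m_im^i=1$ and re-decomposing in the frame $\{l^i,m^i\}$ reproduces the same $\lambda(m^i),\mu(m^i)$. Finally, $\delta_i = l_i\lambda + m_i\mu$ is a restatement of the duality applied to the definitions $\lambda = l^i\delta_i$, $\mu = m^i\delta_i$, and $\lambda(L)=\mu(L)=0$ is immediate from $\delta_kL=0$.

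The hardest part will not be any single step but the sheer bookkeeping: sixteen formulas, most of them with several contractions, where signs and the factors $\tfrac12(J\pm Y)$, $\tfrac12(V\pm H)$ must match exactly. The only subtle aspect is the antiholomorphic behaviour of $m$: because $m$ is a pseudo-vector built from $\sqrt g$, one must handle $\delta_{\bar k}\sqrt g$ carefully and verify that the coefficients $\bar J+\bar Y$ and $\bar V+\bar H$ appear as stated. Since the transformation laws of $J,U,V,\ldots$ have already been recorded, the proposition reduces entirely to this careful algebraic verification, with no further geometric input required.
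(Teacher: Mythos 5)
Your proposal is correct: the master identities ($\delta_kL=0$, $\delta_kl_i=L_{ik}^l l_l$, $\delta_kl^i=-L_{lk}^i l^l$, $\delta_kg=g\,L_{ik}^i$ with $L_{ik}^i=(J+Y)l_k+(V+H)m_k$, and the vanishing, respectively conjugated, barred counterparts) all hold, and contracting with the frame and using $m_1=-\sqrt{g}\,l^2$, $m_2=\sqrt{g}\,l^1$, $m^1=-l_2/\sqrt{g}$, $m^2=l_1/\sqrt{g}$ reproduces every stated formula with the correct signs and the coefficients $\tfrac12(J\pm Y)$, $\tfrac12(V\pm H)$, as I verified on representative cases. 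The paper gives no written proof (it only says ``several calculus imply the following properties''), and your direct computation is precisely the calculation it leaves implicit, so the approach is essentially the same.
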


Then, from (\ref{2.5}) we deduce that
\begin{eqnarray}
l_{i|j} &=&l_{i|\bar{j}}=l_{|j}^i=l_{|\bar{j}}^i=0;  \label{2.5''} \\
m_{i|j} &=&-\frac 12[(J+Y)l_j+(V+H)m_j]m_i;\;m_{i|\bar{j}}=\frac 12[(\bar{J}+%
\bar{Y})l_{\bar{j}}+(\bar{V}+\bar{H})m_{\bar{j}}]m_i;\;  \nonumber \\
m_{|j}^i &=&\frac 12[(J+Y)l_j+(V+H)m_j]m^i;\;m_{|\bar{j}}^i=-\frac 12[(\bar{J%
}+\bar{Y})l_{\bar{j}}+(\bar{V}+\bar{H})m_{\bar{j}}]m^i  \nonumber
\end{eqnarray}
and theirs conjugates.

\section{Curvatures of the C-F connection}

\setcounter{equation}{0}In this section, we shall compute the curvature
coefficients of the $C-F$ connection with respect to the local frames $\{l,m,%
\bar{l},\bar{m}\}$ and $\{\lambda ,\mu ,\bar{\lambda},\bar{\mu}\}$. By means
of these, we characterize the $2$ - dimensional complex Finsler spaces.

\subsection{The $v\bar{v}-$ Riemann type tensor}

Firstly, we study the $v\bar{v}-$ Riemann type tensor $S_{\bar{r}j\overline{%
h}k}.$ Taking into account Proposition 2.1 iii) and the formulas (\ref{2.4}%
), (\ref{2.4''}) and (\ref{2.4'''}), we have

$S_{\bar{r}j\overline{h}k}=-(Al_{\bar{r}}m_jm_k+Bm_{\bar{r}}m_jm_k)|_{\bar{h}%
}$

$=[-A|_{\bar{h}}+\frac{3A}{2F}l_{\bar{h}}+(-A\bar{B}+\frac BF)m_{\bar{h}}]l_{%
\bar{r}}m_jm_k$

$+(-B|_{\bar{h}}+\frac B{2F}l_{\bar{h}}-\frac{B\bar{B}}2m_{\bar{h}})m_{\bar{r%
}}m_jm_k$

$=(-A|_{\bar{s}}m^{\bar{s}}-A\bar{B}+\frac BF)m_{\bar{h}}l_{\bar{r}%
}m_jm_k+(-B|_{\bar{s}}m^{\bar{s}}-\frac{B\bar{B}}2)m_{\bar{h}}m_{\bar{r}%
}m_jm_k.$

But, $S_{\bar{r}j\overline{h}k}$ is symmetric in $j,k$ and $\bar{r},\bar{h}.$
Therefore, it results that
\begin{eqnarray}
S_{\bar{r}j\overline{h}k} &=&\mathbf{I}m_{\bar{h}}m_{\bar{r}%
}m_jm_k\;;\;\;A|_{\bar{s}}m^{\bar{s}}=-A\bar{B}+\frac BF,  \label{2.5'} \\
\mbox{where}\;\;\;\;\mathbf{I}:= &&-B|_{\bar{s}}m^{\bar{s}}-\frac{B\bar{B}}2.
\nonumber
\end{eqnarray}

We note that $\mathbf{I}$ is invariable to the changes of the local
coordinates thanks to $S_{\bar{r}j\overline{h}k}$ and $m_jm_{\bar{h}}m_km_{%
\bar{r}}$ which are tensors. Further on, we point out some properties of the
function $\mathbf{I}$, called by us the \textit{vertical curvature invariant}%
.

By analogy with (\ref{1.8}), we define \textit{the vertical holomorphic
sectional curvature} in direction $l$%
\begin{equation}
K_{F,l}^v(z,\eta ):=2\mathbf{R(}l,\bar{l},l,\bar{l})  \label{2.8}
\end{equation}
and \textit{the vertical holomorphic sectional curvature} in direction $m$%
\begin{equation}
K_{F,m}^v(z,\eta ):=2\mathbf{R(}m,\bar{m},m,\bar{m})  \label{2.8'}
\end{equation}

\begin{theorem}
Let $(M,F)$ be a 2 - dimensional complex Finsler space. Then

i) $K_{F,l}^v(z,\eta )=0;$

ii) $K_{F,m}^v(z,\eta )=2\mathbf{I}$ and $\mathbf{I}$ is real valued;

iii) $\mathbf{I}|_0=-\mathbf{I}$.
\end{theorem}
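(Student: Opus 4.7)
The plan is to reduce all three parts to the expression $S_{\bar{r}j\bar{h}k}=\mathbf{I}\,m_{\bar{h}}m_{\bar{r}}m_{j}m_{k}$ from (2.5'), combined with the elementary contractions $l^{j}m_{j}=0$ and $m^{j}m_{j}=1$ (and their conjugates), which come directly from the orthonormality $\mathcal{G}(l,\bar{m})=0$ and $\mathcal{G}(m,\bar{m})=1$ built into the complex Berwald frame.

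For (i) and (ii) I would unfold the definition $\mathbf{R}(W,\bar{Z},X,\bar{Y}):=G(R(X,\bar{Y})W,\bar{Z})$ on the vertical inputs, which gives
\[
\mathbf{R}(l,\bar{l},l,\bar{l})=S_{\bar{r}j\bar{k}h}\,l^{j}l^{h}l^{\bar{k}}l^{\bar{r}},\qquad \mathbf{R}(m,\bar{m},m,\bar{m})=S_{\bar{r}j\bar{k}h}\,m^{j}m^{h}m^{\bar{k}}m^{\bar{r}}.
\]
Inserting (2.5'), the first factors as $\mathbf{I}(m_{j}l^{j})(m_{h}l^{h})(m_{\bar{k}}l^{\bar{k}})(m_{\bar{r}}l^{\bar{r}})=0$, proving (i); the second factors as $\mathbf{I}\cdot 1\cdot 1\cdot 1\cdot 1=\mathbf{I}$, giving $K_{F,m}^{v}=2\mathbf{I}$. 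Finally, the reality of $\mathbf{I}$ follows from the Hermitian symmetry recorded in (1.4''), $\mathbf{R}(W,\bar{Z},X,\bar{Y})=\overline{\mathbf{R}(Z,\bar{W},Y,\bar{X})}$, applied with $W=Z=X=Y=m$: this forces $\mathbf{R}(m,\bar{m},m,\bar{m})$ to coincide with its own complex conjugate.

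For (iii), since $\mathbf{I}$ is a scalar, $\mathbf{I}|_{0}=\eta^{p}\dot{\partial}_{p}\mathbf{I}$, and I would differentiate $\mathbf{I}=-B|_{\bar{s}}m^{\bar{s}}-\tfrac{1}{2}B\bar{B}$ term by term using (2.4'') and (2.4'''). Contracting $\eta^{p}$ into the relevant lines of (2.4''') yields $\dot{\partial}_{0}B=-\tfrac{3}{2}B$, and by conjugation $\dot{\partial}_{0}\bar{B}=\tfrac{1}{2}\bar{B}$; conjugating the $m^{i}|_{\bar{j}}$-line of (2.4'') and contracting gives $\dot{\partial}_{0}m^{\bar{s}}=\tfrac{1}{2}m^{\bar{s}}$; and the commutativity of partial derivatives yields $\dot{\partial}_{0}(B|_{\bar{s}})=\dot{\partial}_{\bar{s}}(\dot{\partial}_{0}B)=-\tfrac{3}{2}B|_{\bar{s}}$. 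Assembling these,
\[
\mathbf{I}|_{0}=\tfrac{3}{2}B|_{\bar{s}}m^{\bar{s}}-\tfrac{1}{2}B|_{\bar{s}}m^{\bar{s}}-\tfrac{1}{2}\!\left(-\tfrac{3}{2}B\bar{B}+\tfrac{1}{2}B\bar{B}\right)=B|_{\bar{s}}m^{\bar{s}}+\tfrac{1}{2}B\bar{B}=-\mathbf{I}.
\]

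The principal obstacle is the sign bookkeeping in (iii): one must carefully distinguish the opposite $\eta$-homogeneities of $m_{i}$ and $m_{\bar{i}}$ (and of $B$ and $\bar{B}$), which make the unconjugated and conjugated contributions to $\dot{\partial}_{0}$ pull in opposite directions. A cleaner conceptual route is a degree count: (2.1) and (2.4''') show that $\mathbf{I}$ is homogeneous of degree $-1$ in $\eta$, and Euler's identity then gives $\eta^{p}\dot{\partial}_{p}\mathbf{I}=-\mathbf{I}$ at once.
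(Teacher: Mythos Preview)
Your argument for (i) and (ii) is exactly the paper's: plug the formula $S_{\bar r j\bar h k}=\mathbf{I}\,m_{\bar h}m_{\bar r}m_jm_k$ from (2.5') into the definition of $\mathbf{R}$, use $l^jm_j=0$, $m^jm_j=1$, and invoke the Hermitian symmetry (1.4'') for the reality of $\mathbf{I}$.

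For (iii) you take a genuinely different route. The paper proves $\mathbf{I}|_0=-\mathbf{I}$ from the Bianchi identity $S_{\bar r j\bar h k}|_i=S_{\bar r j\bar h i}|_k$: writing both sides via (2.5') and (2.4'') and contracting with $m^{\bar h}m^{\bar r}m^jm^il^k$ singles out the relation. Your approach instead differentiates the explicit expression $\mathbf{I}=-B|_{\bar s}m^{\bar s}-\tfrac12 B\bar B$ directly, reading the $\eta$-homogeneities off (2.4'') and (2.4'''). Your bookkeeping is correct: $\dot\partial_0 B=-\tfrac32 B$, $\dot\partial_0\bar B=\tfrac12\bar B$, $\dot\partial_0 m^{\bar s}=\tfrac12 m^{\bar s}$, and $\dot\partial_0(B|_{\bar s})=-\tfrac32 B|_{\bar s}$ combine to give $-\mathbf{I}$ exactly as you wrote. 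The Euler-identity remark is the cleanest formulation of what you are doing and makes the sign inevitable. The paper's Bianchi-identity proof has the advantage of being intrinsic---it never unpacks $\mathbf{I}$ into the chart-dependent quantities $B$, $\bar B$, $m^{\bar s}$---whereas your computation is more hands-on but entirely self-contained, relying only on the formulas already derived in \S3--\S4.1. Both are short and valid.
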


\begin{proof}
By (\ref{2.5'}) $\mathbf{R(}l,\bar{l},l,\bar{l})=l^{\bar{h}}l^{\bar{r}%
}l^jl^kS_{\bar{r}j\overline{h}k}=0$ and \newline
$\mathbf{R(}m,\bar{m},m,\bar{m})=m^{\bar{h}}m^{\bar{r}}m^jm^kS_{\bar{r}j%
\overline{h}k}=\mathbf{I.}$ Indeed, $\mathbf{\bar{I}}=\overline{\mathbf{R(}m,%
\bar{m},m,\bar{m})}=\mathbf{R(}m,\bar{m},m,\bar{m})=\mathbf{I.}$ These imply
i) and ii).

Considering the Bianchi identity $S_{\bar{r}j\overline{h}k}|_i=S_{\bar{r}j%
\overline{h}i}|_k,$ (see \cite{Mub}, p. 77) and using the relations (\ref
{2.4}) and (\ref{2.5'}), we have

$\mathbf{I}|_im_{\bar{h}}m_{\bar{r}}m_jm_k-\frac 1F\mathbf{I}m_{\bar{h}}m_{%
\bar{r}}m_jm_il_k=\mathbf{I}|_km_{\bar{h}}m_{\bar{r}}m_jm_i-\frac 1F\mathbf{I%
}m_{\bar{h}}m_{\bar{r}}m_jm_kl_i,$ which contracted by $m^{\bar{h}}m^{\bar{r}%
}m^jm^il^k$ gives iii).
\end{proof}

\begin{proposition}
Let $(M,F)$ be a 2 - dimensional complex Finsler space.

i) It is purely Hermitian if and only if $A=0;$

ii) If $|A|\neq 0$ and $B=0$ then $\mathbf{I}=0$ and $A|_{\bar{h}}=\frac{3A}{%
2F}l_{\bar{h}}.$
\end{proposition}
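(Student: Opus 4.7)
The plan is to reduce both parts to the algebraic consequences of the decomposition (\ref{2.4}) of $C_{jk}^i$ together with the hidden constraint in (\ref{2.5'}) that the symmetry of the $v\bar v$-Riemann type tensor imposes on the scalars $A$ and $B$.

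For (i), I would first recall that \emph{purely Hermitian} means $g_{i\bar j}=g_{i\bar j}(z)$, equivalently $C_{jk}^i=0$. The forward direction is immediate from (\ref{2.4}): writing $C_{jk}^i = A l^i m_j m_k + B m^i m_j m_k$ and contracting with $l_i m^j m^k$ (respectively $m_i m^j m^k$) yields $A=0$ (respectively $B=0$). The nontrivial direction is to deduce $B=0$ from $A\equiv 0$ alone; for this I would invoke the constraint $A|_{\bar s}m^{\bar s} = -A\bar B + B/F$ already extracted in (\ref{2.5'}). With $A\equiv 0$ both $A|_{\bar s}m^{\bar s}$ and $A\bar B$ vanish, forcing $B/F=0$, hence $B=0$ and $C_{jk}^i=0$.

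For (ii), I would assume $|A|\neq 0$ and $B\equiv 0$. Since $B$ vanishes identically, $\dot\partial_{\bar h} B=0$; comparing with the decomposition $B|_{\bar h} = \frac{B}{2F}l_{\bar h} + (B|_{\bar s}m^{\bar s})m_{\bar h}$ from (\ref{2.4'''}) (and using that $l_{\bar h},m_{\bar h}$ are independent) the $m_{\bar h}$-coefficient must vanish, so $B|_{\bar s}m^{\bar s}=0$. Substituting into the definition $\mathbf{I} = -B|_{\bar s}m^{\bar s} - B\bar B/2$ from (\ref{2.5'}) yields $\mathbf{I}=0$. For the derivative of $A$, I substitute $B=0$ in the same constraint (\ref{2.5'}) to get $A|_{\bar s}m^{\bar s}=0$, and then the decomposition $A|_{\bar h} = \frac{3A}{2F}l_{\bar h} + (A|_{\bar s}m^{\bar s})m_{\bar h}$ in (\ref{2.4'''}) collapses to $A|_{\bar h} = \frac{3A}{2F}l_{\bar h}$.

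The only real subtlety is the reverse direction of (i): naively $A=0$ and $B=0$ look like independent conditions on the two scalars appearing in $C_{jk}^i$, so one must exploit the symmetry of $S_{\bar r j\bar h k}$ under $\bar r\leftrightarrow\bar h$ (already encoded as the second relation of (\ref{2.5'})) to bridge them. A secondary point is chart-independence: although $A$ and $B$ are only pseudo-scalars transforming by a unit-modulus factor, their vanishing is a chart-invariant condition, so the equivalences are globally meaningful with no further gluing step required.
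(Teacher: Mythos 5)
Your proof is correct and follows essentially the same route as the paper: the backward implication of (i) uses the constraint $A|_{\bar{s}}m^{\bar{s}}=-A\bar{B}+\frac{B}{F}$ from (\ref{2.5'}) to get $A=0\Rightarrow B=0$ and hence $C_{jk}^i=0$, and (ii) follows by substituting $B\equiv 0$ into (\ref{2.5'}) and (\ref{2.4'''}), exactly as the authors indicate. The only cosmetic remark is that in (ii) you need not compare coefficients against the decomposition of $B|_{\bar{h}}$: $B\equiv 0$ already gives $B|_{\bar{s}}=\dot{\partial}_{\bar{s}}B=0$ directly.
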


\begin{proof}
By (\ref{2.5'}), $A=0$ implies $B=0.$ These give $C_{i\bar{h}j}=0$ which
means that $\frac{\partial g_{i\bar{h}}}{\partial \eta ^j}=0,$ i.e. $F$ is
purely Hermitian. Conversely, if $F$ is purely Hermitian then $A=B=0.$ Thus,
the assertion i) is proved.

The claim ii) follows readily from (\ref{2.5'}) and (\ref{2.4'''}).  Obviously, the statements are independent of the changes of local charts.
\end{proof}

The above Proposition shows that there are $2$ - dimensional complex Finsler
spaces with $K_{F,m}^v(z,\eta )=0$ which are not purely Hermitian.
Subsequently, we pay more attention to the case $AB^2\neq 0.$

\subsection{The $v\bar{h}-$ Riemann type tensor}

Let $\Xi _{\overline{r}j\overline{h}k}$ be the $v\bar{h}-$ Riemann type
tensor. Using the Proposition 2.1 iii) and the formulas (\ref{2.4}) and (\ref
{2.5''}), we have
\begin{eqnarray}
\Xi _{\overline{r}j\overline{h}k} &=&-[A_{|\bar{h}}l_{\bar{r}}+A(\bar{J}+%
\bar{Y})l_{\bar{r}}l_{\bar{h}}+A(\bar{V}+\bar{H})l_{\bar{r}}m_{\bar{h}}
\label{2.8'''} \\
&&+B_{|\bar{h}}m_{\bar{r}}+\frac B2(\bar{J}+\bar{Y})m_{\bar{r}}l_{\bar{h}%
}+\frac B2(\bar{V}+\bar{H})m_{\bar{r}}m_{\bar{h}}]m_jm_k.  \nonumber
\end{eqnarray}

We wish to investigate the relationship among $A$, $B,$ $\mathbf{I}$ and to
characterize the 2 - dimensional complex Finsler spaces by means of these.
For this, contracting the Bianchi identity
\begin{equation}
\Xi _{\overline{r}j\bar{h}k}|_{\bar{s}}-S_{\bar{r}j\bar{s}k|\bar{h}}+\Xi _{%
\bar{r}j\bar{p}k}\overline{C_{sh}^p}=0,  \label{2.9;}
\end{equation}
(see \cite{Mub}, p. 77), with the tensor $m^{\bar{r}}m^jm^km^{\bar{s}}$ and
taking into account (\ref{2.8'''}) and (\ref{2.4''}), we obtain

$\Xi _{\overline{r}j\overline{h}k}|_{\bar{s}}m^{\bar{r}}m^jm^km^{\bar{s}%
}=-\{B_{|\bar{h}}|_{\bar{s}}m^{\bar{s}}+\frac{\bar{B}}2B_{|\bar{h}}$

$+\frac 12[-\mathbf{I}(\bar{J}+\bar{Y})+B(\bar{J}+\bar{Y})|_{\bar{s}}m^{\bar{%
s}}-\frac BF(\bar{V}+\bar{H})]l_{\bar{h}}$

$+\frac 12[B|_{\bar{s}}m^{\bar{s}}(\bar{V}+\bar{H})+B(\bar{V}+\bar{H})|_{%
\bar{s}}m^{\bar{s}}]m_{\bar{h}}\};$

$S_{\bar{r}j\bar{s}k|\bar{h}}m^{\bar{r}}m^jm^{\bar{s}}m^k=\mathbf{I}_{|\bar{h%
}}$;

$\Xi _{\bar{r}j\bar{p}k}\overline{C_{sh}^p}m^{\bar{r}}m^jm^km^{\bar{s}}=-[%
\frac{\bar{A}}FB_{|\bar{0}}+\bar{B}B_{|\bar{p}}m^{\bar{p}}+\frac{\bar{A}B}2(%
\bar{J}+\bar{Y})+\frac{B\bar{B}}2(\bar{V}+\bar{H})]m_{\bar{h}}.$

Hence
\begin{eqnarray}
B_{|\bar{h}}|_{\bar{s}}m^{\bar{s}} &=&-\{\frac 12[-\mathbf{I}(\bar{J}+\bar{Y}%
)+B(\bar{J}+\bar{Y})|_{\bar{s}}m^{\bar{s}}-\frac BF(\bar{V}+\bar{H})]l_{\bar{%
h}}  \label{2.9''} \\
&&+\frac 12[(-\mathbf{I+}\frac{B\bar{B}}2)(\bar{V}+\bar{H})+B(\bar{V}+\bar{H}%
)|_{\bar{s}}m^{\bar{s}}  \nonumber \\
&&+2\frac{\bar{A}}FB_{|\bar{0}}+2\bar{B}B_{|\bar{p}}m^{\bar{p}}+\bar{A}B(%
\bar{J}+\bar{Y})]m_{\bar{h}}+\mathbf{I}_{|\bar{h}}+\frac{\bar{B}}2B_{|\bar{h}%
}\}  \nonumber
\end{eqnarray}
and its conjugate.

On the other hand, contracting in (\ref{2.9;}) by $m^{\bar{r}}m^jm^kl^{\bar{s%
}}$, using

$\Xi _{\bar{r}j\overline{h}k}|_{\bar{s}}m^{\bar{r}}m^jm^kl^{\bar{s}}=-\frac
1F\{B_{|\bar{h}}|_{\bar{0}}-\frac 12B_{|\bar{h}}+\frac B2[-\frac 12(\bar{J}+%
\bar{Y})+(J+Y)|_{\bar{0}}]l_{\bar{h}}$

$+\frac B2[\frac 12(\bar{V}+\bar{H})+(\bar{V}+\bar{H})|_{\bar{0}}]m_{\bar{h}%
}\}$ and

$S_{\bar{r}j\bar{s}k|\bar{h}}m^{\bar{r}}m^jl^{\bar{s}}m^k=\Xi _{\bar{r}j\bar{%
p}k}\overline{C_{sh}^p}m^{\bar{r}}m^jm^kl^{\bar{s}}=0$ \newline
\smallskip
we have,
\begin{eqnarray}
B_{|\bar{h}}|_{\bar{0}} &=&\frac 12B_{|\bar{h}}-\frac B2[-\frac{\bar{J}+\bar{%
Y}}2+(\bar{J}+\bar{Y})|_{\bar{0}}]l_{\bar{h}}  \label{2.9;;;} \\
&&-\frac B2[\frac{\bar{V}+\bar{H}}2+(\bar{V}+\bar{H})|_{\bar{0}}]m_{\bar{h}},
\nonumber
\end{eqnarray}
and its conjugate.

The conjugates of (\ref{2.9''}), (\ref{2.9;;;}) and Theorem 4.1 ii) allow us
to write
\begin{eqnarray}
\bar{B}_{|k}|_j &=&\frac 1{2F}\{\bar{B}_{|k}-\bar{B}[-\frac{J+Y}2+(J+Y)|_0%
]l_k  \label{2.9.} \\
&&-\bar{B}[\frac{V+H}2+(V+H)|_0]m_k\}l_j  \nonumber \\
&&-\{\frac 12[-\mathbf{I}(J+Y)+\bar{B}(J+Y)|_sm^s-\frac{\bar{B}}F(V+H)]l_k
\nonumber \\
&&+\frac 12[(-\mathbf{I+}\frac{B\bar{B}}2)(V+H)+\bar{B}(V+H)|_sm^s  \nonumber
\\
&&+2\frac AF\bar{B}_{|0}+2B\bar{B}_{|s}m^s+A\bar{B}(J+Y)]m_k+\mathbf{I}%
_{|k}+\frac B2\bar{B}_{|k}\}m_j.  \nonumber
\end{eqnarray}

It is also worthwhile to note the following identity
\begin{eqnarray}
\bar{B}|_{j|k} &=&\frac 1{2F}\bar{B}_{|k}l_j  \label{7} \\
&&-\{\mathbf{I}_{|k}+\frac{\bar{B}}2B_{|k}+\frac B2\bar{B}_{|k}+\frac 12(-%
\mathbf{I-}\frac{B\bar{B}}2)[(J+Y)l_k+(V+H)m_k]\}m_j,  \nonumber
\end{eqnarray}
which is obtained from (\ref{2.4''}), (\ref{2.4'''}) and (\ref{2.5'}).

Therefore, (\ref{2.9.}) and (\ref{7}) lead to
\begin{eqnarray}
\bar{B}|_{j|k}-\bar{B}_{|k}|_j &=&C_{jk}^i\bar{B}_{|i}+\frac{\bar{B}}%
2\{\frac 1F[-\frac{J+Y}2+(J+Y)|_0]l_kl_j  \label{7'} \\
&&+\frac 1F[\frac{V+H}2+(V+H)|_0]m_kl_j  \nonumber \\
&&+[\frac B2(J+Y)+(J+Y)|_sm^s-\frac 1F(V+H)]l_km_j  \nonumber \\
&&+[B(V+H)+(V+H)|_sm^s+A(J+Y)]m_km_j-B_{|k}m_j\},  \nonumber
\end{eqnarray}
because $C_{jk}^i\bar{B}_{|i}=(\frac AF\bar{B}_{|0}+B\bar{B}_{|s}m^s)m_km_j.$

\subsection{The $h\bar{v}-$ Riemann type tensor}

Now let us consider the $h\bar{v}-$ Riemann type tensor $P_{\bar{r}j%
\overline{h}k}.$ By Proposition 2.1.ii) and formulas (\ref{2.4}) and
(\ref{2.4''}), it results that
\begin{equation}
P_{\bar{r}0\overline{h}k}=-F[\bar{A}_{|k}+\bar{A}(J+Y)l_k+\bar{A}(V+H)m_k]m_{%
\bar{r}}m_{\bar{h}}.  \label{2.8''}
\end{equation}

But, Proposition 2.1 vi) allows us to reconstruct $P_{\bar{r}j\overline{h}%
k}. $ Indeed,
\begin{equation}
P_{\bar{r}j\overline{h}k}=P_{\bar{r}0\overline{h}k}|_j+P_{\bar{r}0\overline{h%
}s}C_{kj}^s  \label{2.9}
\end{equation}
and from (\ref{2.8''}), we obtain
\begin{eqnarray}
P_{\bar{r}0\overline{h}s}C_{kj}^s &=&-F[\frac AF\bar{A}_{|0}+B\bar{A}_{|s}m^s
\label{2.9'} \\
&&+A\bar{A}(J+Y)+B\bar{A}(V+H)]m_{\bar{r}}m_{\bar{h}}m_km_j  \nonumber
\end{eqnarray}
and
\begin{eqnarray}
P_{\bar{r}0\overline{h}k}|_j &=&-\{-\frac 12\bar{A}_{|k}l_j+FB\bar{A}%
_{|k}m_j+F\bar{A}_{|k}|_j-\bar{A}(J+Y)l_kl_j  \label{2.99} \\
&&+\bar{A}[FB(J+Y)-(V+H)]l_km_j+\frac{F\bar{A}B}2(V+H)m_km_j  \nonumber \\
&&+F[\bar{A}|_j(J+Y)+\bar{A}(J+Y)|_j]l_k  \nonumber \\
&&+F[\bar{A}|_j(V+H)+\bar{A}(V+H)|_j]m_k\}m_{\bar{r}}m_{\bar{h}}.  \nonumber
\end{eqnarray}

Plugging (\ref{2.9'}) and (\ref{2.99}) into (\ref{2.9}), gives
\begin{eqnarray}
P_{\bar{r}j\overline{h}k} &=&-\{-\frac 12\bar{A}_{|k}l_j+FB\bar{A}_{|k}m_j+F%
\bar{A}_{|k}|_j-\bar{A}(J+Y)l_kl_j  \label{2.9'''} \\
&&+\bar{A}[FB(J+Y)-(V+H)]l_km_j  \nonumber \\
&&+[A\bar{A}_{|0}+FB\bar{A}_{|s}m^s+FA\bar{A}(J+Y)+\frac{3F\bar{A}B}%
2(V+H)]m_km_j  \nonumber \\
&&+F[\bar{A}|_j(J+Y)+\bar{A}(J+Y)|_j]l_k  \nonumber \\
&&+F[\bar{A}|_j(V+H)+\bar{A}(V+H)|_j]m_k\}m_{\bar{r}}m_{\bar{h}}.  \nonumber
\end{eqnarray}
Recall the following property, $P_{\overline{r}j\overline{h}k}=\Xi _{j%
\overline{r}k\overline{h}}=\overline{\Xi _{\overline{j}r\overline{k}h}}$.
Writing it by means of (\ref{2.8'''}) and (\ref{2.9'''}), we obtain the
conditions
\begin{eqnarray}
\overline{A}_{|k}|_0 &=&\frac 32\overline{A}_{|k}\;;  \label{2.10''} \\
\overline{A}_{|k}|_jm^j &=&\frac 1F\overline{B}_{|k}-B\overline{A}_{|k}-[%
\frac{\bar{B}}{2F}(J+Y)+\bar{A}(J+Y)|_sm^s-\frac{\bar{A}}F(V+H)]l_k
\nonumber \\
&&-[(\frac{\bar{B}}{2F}+\frac{\bar{A}B}2)(V+H)+\bar{A}(V+H)|_sm^s+A\bar{A}%
(J+Y)  \nonumber \\
&&+\frac AF\overline{A}_{|0}+B\overline{A}_{|s}m^s]m_k  \nonumber
\end{eqnarray}
and theirs conjugates.

From both formulas (\ref{2.10''}), it follows that
\begin{eqnarray}
\overline{A}_{|k}|_j &=&\frac 3{2F}\overline{A}_{|k}l_j+\{\frac 1F\overline{B%
}_{|k}-B\overline{A}_{|k}  \label{0} \\
&&-\mathbf{[}\frac{\bar{B}}{2F}(J+Y)+\bar{A}(J+Y)|_sm^s-\frac{\bar{A}}F(V+H)%
\mathbf{]}l_k  \nonumber \\
&&-[(\frac{\bar{B}}{2F}+\frac{\bar{A}B}2)(V+H)+\bar{A}(V+H)|_sm^s+A\bar{A}%
(J+Y)  \nonumber \\
&&+\frac AF\overline{A}_{|0}+B\overline{A}_{|s}m^s]m_k\}m_j.  \nonumber
\end{eqnarray}

Moreover, from (\ref{2.4''}), (\ref{2.4'''}) and (\ref{2.5'}), we have
\begin{eqnarray}
\overline{A}|_{j|k} &=&\frac 3{2F}\overline{A}_{|k}l_j+\{-B\overline{A}_{|k}-%
\overline{A}B_{|k}+\frac 1F\overline{B}_{|k}  \label{1} \\
&&-(\frac{\bar{B}}{2F}-\frac{\bar{A}B}2)[(J+Y)l_k+(V+H)m_k\mathbf{]\}}m_j.
\nonumber
\end{eqnarray}

By subtracting (\ref{0}) from (\ref{1}), we get
\begin{eqnarray}
\overline{A}|_{j|k}-\overline{A}_{|k}|_j &=&C_{jk}^i\overline{A}_{|i}
\label{1'} \\
&&-\overline{A}\{B_{|k}-\mathbf{[}\frac B2(J+Y)+(J+Y)|_sm^s-\frac 1F(V+H)%
\mathbf{]}l_k  \nonumber \\
&&-[B(V+H)+(V+H)|_sm^s+A(J+Y)]m_k\}m_j,  \nonumber
\end{eqnarray}
because $C_{jk}^i\overline{A}_{|i}=(\frac AF\overline{A}_{|0}+B\overline{A}%
_{|s}m^s)m_km_j.$

\begin{theorem}
Let $(M,F)$ be a 2 - dimensional complex Finsler space. Then it is

i) purely Hermitian, or

ii) with $|A|\neq 0,$ $B=0$ and
\begin{equation}
(J+Y)|_sm^s=\frac 1F(V+H)\;;\;\;(V+H)|_sm^s=-A(J+Y),  \label{2}
\end{equation}

or

iii) with $AB^2\neq 0$ and
\begin{eqnarray}
(J+Y)|_0 &=&\frac{J+Y}2\;;\;\;(V+H)|_0=-\frac{V+H}2\;;  \label{3} \\
B_{|k} &=&[\frac B2(J+Y)+(J+Y)|_sm^s-\frac 1F(V+H)]l_k  \nonumber \\
&&+[B(V+H)+(V+H)|_sm^s+A(J+Y)]m_k.  \nonumber
\end{eqnarray}
\end{theorem}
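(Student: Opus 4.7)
The plan is to combine the torsion-free commutation formula of Proposition 2.2 i), applied to the functions $\bar A$ and $\bar B$, with the two identities (\ref{1'}) and (\ref{7'}) derived in the preceding $v\bar h$- and $v\bar v$-subsections, and then to case-split on the (chart-independent) vanishing of the vertical scalars $A$ and $B$. The guiding observation is that (\ref{1'}) and (\ref{7'}) have exactly the same left-hand side as the generic commutator in Proposition 2.2 i), but pick up an extra term proportional to $\bar A$ and $\bar B$ respectively; that extra term must therefore vanish identically.

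First, applying Proposition 2.2 i) with $X=\bar B$ and the indices $j,k$ interchanged (which is legitimate because $C^i_{jk}=C^i_{kj}$) gives $\bar B|_{j|k}-\bar B_{|k}|_j=C^i_{jk}\bar B_{|i}$. Subtracting this from (\ref{7'}) leaves $(\bar B/2)\{\cdots\}=0$, where $\{\cdots\}$ is the curly bracket displayed on the right-hand side of (\ref{7'}). Supposing $B\neq 0$, that bracket vanishes identically in the free indices $j,k$. Contracting with $l^j l^k$, $l^j m^k$, $m^j l^k$, $m^j m^k$ in turn, and using the orthonormality relations $l_i l^i=m_i m^i=1$, $l_i m^i=m_i l^i=0$, isolates the four scalar coefficients and reads off $(J+Y)|_0=(J+Y)/2$, $(V+H)|_0=-(V+H)/2$, together with the expression for $B_{|k}$ stated in (\ref{3}).

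An identical argument for $X=\bar A$ combined with (\ref{1'}) produces $-\bar A\{B_{|k}-[\cdots]l_k-[\cdots]m_k\}m_j=0$. Thus either $A=0$, which by Proposition 4.2 i) is case (i) (purely Hermitian), or the brace vanishes and $B_{|k}=[\cdots]l_k+[\cdots]m_k$. Under the hypothesis $B=0$ one has $B_{|k}=0$, so this identity collapses to $[(J+Y)|_s m^s - (V+H)/F]\,l_k + [(V+H)|_s m^s + A(J+Y)]\,m_k = 0$; contracting with $l^k$ and with $m^k$ recovers the two relations (\ref{2}) of case (ii). When $AB^2\neq 0$, the $\bar A$- and $\bar B$-analyses are jointly in force and together deliver exactly the three relations of case (iii).

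The heavy lifting — computing (\ref{1'}) and (\ref{7'}) — is already carried out in the excerpt, so the only remaining obstacle is the bookkeeping of the four scalar coefficients through the orthonormal contractions, which is mechanical once one remembers the pairings $l^il_i=m^im_i=1$, $l^im_i=m^il_i=0$. The chart-independence of the trichotomy is automatic: vanishing of $A$ or of $B$ in one chart forces vanishing in every chart by the transformation rules recorded after (\ref{2.4}), and the scalars $(J+Y)$, $(V+H)$ and the equations in which they appear are built from globally meaningful contractions of $L_{jk}^i$ with the pseudo-vectors $l,m$ so that their defining relations descend to chart-independent statements.
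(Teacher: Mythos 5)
Your proposal is correct and follows essentially the paper's own route: it applies Proposition 2.2 i) to $\bar A$ and $\bar B$, compares with the identities (\ref{1'}) and (\ref{7'}) to force the extra brackets to vanish, and then splits on the (chart-independent) vanishing of $A$ and $B$, exactly as in the paper, with only a cosmetic reordering of the case analysis (you extract all of (\ref{3}) directly from the $\bar B$-bracket, whereas the paper first gets the $B_{|k}$ formula from the $\bar A$-equation and then substitutes). The only slip is the reference: the characterization ``purely Hermitian iff $A=0$'' is Proposition 4.1 i), not 4.2 i).
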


\begin{proof}
Writing the identity i) from Proposition 2.2 for the vertical terms $\overline{A}$
and $\overline{B}$ it involves $\overline{A}|_{k|j}-\overline{A}%
_{|j}|_k=C_{jk}^i\overline{A}_{|i}$ and $\overline{B}|_{k|j}-\overline{B}%
_{|j}|_k=C_{jk}^i\overline{B}_{|i}$. But, taking into account (\ref{1'}) and
(\ref{7'}), it follows
\begin{eqnarray}
&&\overline{A}\{B_{|k}-\mathbf{[}\frac B2(J+Y)+(J+Y)|_sm^s-\frac 1F(V+H)%
\mathbf{]}l_k  \label{4} \\
&&-[B(V+H)+(V+H)|_sm^s+A(J+Y)]m_k\}m_j=0  \nonumber
\end{eqnarray}
and
\begin{eqnarray}
&&\bar{B}\{\frac 1F[-\frac{J+Y}2+(J+Y)|_0]l_kl_j  \label{5} \\
&&+\frac 1F[\frac{V+H}2+(V+H)|_0]m_kl_j  \nonumber \\
&&+[\frac B2(J+Y)+(J+Y)|_sm^s-\frac 1F(V+H)]l_km_j  \nonumber \\
&&+[B(V+H)+(V+H)|_sm^s+A(J+Y)]m_km_j-B_{|k}m_j\}=0.  \nonumber
\end{eqnarray}

Hence, we have the cases:

1. If $\overline{A}=0$ then by means of Proposition 4.1 i) gives the
statement i), or

2. If  $\overline{A}\neq 0$ then $|A|\neq 0$ and by (\ref{4}) we obtain
\begin{eqnarray}
B_{|k} &=&[\frac B2(J+Y)+(J+Y)|_sm^s-\frac 1F(V+H)]l_k  \label{6} \\
&&+[B(V+H)+(V+H)|_sm^s+A(J+Y)]m_k,  \nonumber
\end{eqnarray}
which substituted into (\ref{5}) leads to
\begin{equation}
\bar{B}\{[-\frac{J+Y}2+(J+Y)|_0]l_kl_j+[\frac{V+H}2+(V+H)|_0]m_kl_j\}=0.
\nonumber
\end{equation}
From here, it results either $\bar{B}=0$ which together with (\ref{6}) gives ii) or $\bar{B}\neq 0$. In this last case we have $|B|\neq 0$, and by (\ref{6}) results $%
(J+Y)|_0=\frac{J+Y}2\;$and$\;(V+H)|_0=-\frac{V+H}2,$ which with (\ref{5}) imply iii).

The independence of the above statement to the changes of local charts results by straightforward computations using (\ref{c}).
\end{proof}

\subsection{Two dimensional complex Berwald and Landsberg spaces}

The above considerations offer us the premises for some special
characterizations of the 2 - dimensional complex Berwald and Landsberg
spaces. Firstly, we write the identity iv) of Proposition 2.1 in terms of
the local complex Berwald frame. Some computations give

$\dot{\partial}_{\bar{h}}L_{jk}^i=\{[\bar{l}(J)+\frac 1{2F}J]l^il_jl_k+[\bar{%
l}(U)-\frac 1{2F}U]l^im_jl_k+[\bar{l}(V)-\frac 1{2F}V]l^il_jm_k$

$+[\bar{l}(X)-\frac 1{2F}X]l^im_jm_k+[\bar{l}(O)+\frac 3{2F}O]m^il_jl_k+[%
\bar{l}(Y)+\frac 1{2F}Y]m^im_jl_k$

$+[\bar{l}(E)+\frac 1{2F}E]m^il_jm_k+[\bar{l}(H)-\frac 1{2F}H]m^im_jm_k\}l_{%
\bar{h}}$

$+\{[\bar{m}(J)-\frac 1FO]l^il_jl_k+[\bar{m}(U)-\frac 1F(Y-J)+\frac 12\bar{B}%
U]l^im_jl_k$

$+[\bar{m}(V)-\frac 1F(E-J)+\frac 12\bar{B}V]l^il_jm_k+[\bar{m}(X)-\frac
1F(H-U-V)+\bar{B}X]l^im_jm_k$

$+[\bar{m}(O)-\frac 12\bar{B}O]m^il_jl_k+[\bar{m}(Y)+\frac 1FO]m^im_jl_k+[%
\bar{m}(E)+\frac 1FO]m^il_jm_k$

$+[\bar{m}(H)+\frac 1F(Y+E)+\frac 12\bar{B}H]m^im_jm_k\}m_{\bar{h}}.$

Using $\Xi _{\overline{r}j\overline{h}k}=-C_{j\bar{r}k|\bar{h}}$ and (\ref
{2.8'''}) it results

$C_{j\bar{r}k|\bar{h}}=[A_{|\bar{h}}l_{\bar{r}}+A(\bar{J}+\bar{Y})l_{\bar{r}%
}l_{\bar{h}}+A(\bar{V}+\bar{H})l_{\bar{r}}m_{\bar{h}}+B_{|\bar{h}}m_{\bar{r}%
}+\frac B2(\bar{J}+\bar{Y})m_{\bar{r}}l_{\bar{h}}+\frac B2(\bar{V}+\bar{H}%
)m_{\bar{r}}m_{\bar{h}}]m_jm_k.$

The above outcomes substituted into Proposition 2.1 iv), lead to

\begin{proposition}
Let $(M,F)$ be a $2$ - dimensional complex Finsler space. Then

i) $J|_{\bar{k}}=-\frac 1{2F}Jl_{\bar{k}}+\frac 1FOm_{\bar{k}};$ $V|_{\bar{k}%
}=\frac 1{2F}Vl_{\bar{k}}+[\frac 1F(E-J)-\frac 12\bar{B}V]m_{\bar{k}};$

i\medskip i) $\bar{l}(U)-\frac 1{2F}U=\bar{l}(X)-\frac 1{2F}X=\bar{l}%
(O)+\frac 3{2F}O=\bar{l}(Y)+\frac 1{2F}Y=\bar{l}(E)+\frac 1{2F}E$

$=\bar{l}(H)-\frac 1{2F}H=0;$

iii) $\bar{m}(U)-\frac 1F(Y-J)+\frac 12\bar{B}U+FA[\bar{m}(O)-\frac 12\bar{B}%
O]=0;$

iv) $\bar{m}(V)-\frac 1F(E-J)+\frac 12\bar{B}V=0;$

v) $\bar{m}(X)-\frac 1F(H-U-V)+\bar{B}X+FA[\bar{m}(E)+\frac 1FO]=0;$

vi) $\frac 1F\bar{A}_{|0}+\bar{A}(J+Y)=\bar{m}(O)-\frac 12\bar{B}O;$

vii) $\frac 1F\bar{B}_{|0}+\frac{\bar{B}}2(J+Y)=\bar{m}(Y)+\frac 1FO+FB[\bar{%
m}(O)-\frac 12\bar{B}O];$

viii) $\bar{A}_{|k}m^k+\bar{A}(V+H)=\bar{m}(E)+\frac 1FO;$

ix) $\bar{B}_{|k}m^k+\frac{\bar{B}}2(V+H)=\bar{m}(H)+\frac 1F(Y+E)+\frac 12%
\bar{B}H+FB[\bar{m}(E)+\frac 1FO].$
\end{proposition}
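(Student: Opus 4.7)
The plan is to apply the identity iv) of Proposition 2.1, namely
\[
C_{l\bar r\bar h|k} = (\dot\partial_{\bar h} L_{lk}^i)\,g_{i\bar r} + (\dot\partial_{\bar h} N_k^i)\,C_{i\bar r l},
\]
after expanding every ingredient in the local complex Berwald frame, and to read off the identities i)--ix) by matching coefficients along the tensor basis built from $\{l_l,m_l\}$, $\{l_k,m_k\}$, $\{l_{\bar r},m_{\bar r}\}$, $\{l_{\bar h},m_{\bar h}\}$.

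Three expansions are required. First, $C_{l\bar r\bar h}$ on the LHS equals $\dot\partial_{\bar h}g_{l\bar r}$, which after (2.3) equals $\bar A l_l m_{\bar r}m_{\bar h}+\bar B m_l m_{\bar r}m_{\bar h}$ (the conjugate of (2.4) with indices lowered); its $h$-covariant derivative with respect to $k$ is computed using (2.5'') and the conjugates of (2.4'). Second, $\dot\partial_{\bar h} L_{lk}^i$ is obtained from the decomposition (2.10) by writing $\dot\partial_{\bar h}=l_{\bar h}\bar l+m_{\bar h}\bar m$ via (2.3) and applying (2.4') both to the scalar coefficients $J,U,V,X,O,Y,E,H$ and to the frame factors $l^i,m^i,l_l,m_l,l_k,m_k$; the resulting formula is precisely the display immediately preceding the proposition statement. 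Contracting with $g_{i\bar r}=l_i l_{\bar r}+m_i m_{\bar r}$ then distributes the $l^i$-blocks into the $l_{\bar r}$-sector and the $m^i$-blocks into the $m_{\bar r}$-sector. Third, the second summand $(\dot\partial_{\bar h}N_k^i)C_{i\bar r l}$ is built from $N_k^i=F(Jl^i l_k+Vl^i m_k+Om^i l_k+Em^i m_k)$ (derived inside the proof of Proposition 3.1) and from $C_{i\bar r l}=A m_i m_l l_{\bar r}+B m_i m_l m_{\bar r}$ (by (2.4) with the upper index lowered); since $l^i m_i = 0$, only the $m^i$-component of $\dot\partial_{\bar h}N_k^i$ survives the contraction, confining this piece entirely to the $m_l$-sector.

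Matching the 16 scalar coefficients in the tensor basis produces the nine identities. Most sectors yield directly one of the equalities ii)--ix). Identity i) for $J|_{\bar k}$, respectively $V|_{\bar k}$, arises by combining the $l_{\bar h}$- and $m_{\bar h}$-equations in the $l_l l_k$-sector, respectively the $l_l m_k$-sector, via the scalar decomposition $J|_{\bar k}=l_{\bar k}\bar l(J)+m_{\bar k}\bar m(J)$, and analogously for $V$. The cross terms such as $FA[\bar m(O)-\frac{1}{2}\bar B O]$ in iii), $FA[\bar m(E)+\frac{1}{F}O]$ in v), and $FB[\bar m(O)-\frac{1}{2}\bar B O]$ in vii) and ix) arise precisely from the $Om^i l_k$, respectively $E m^i m_k$, summands of $N_k^i$ multiplied by the $A$, respectively $B$, coefficients of $C_{i\bar r l}$ in the second piece, crossing the corresponding equations of the first piece.

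The main obstacle is the sheer size of the bookkeeping: each of the eight summands of (2.10) splinters into several cross terms when $\bar l$ and $\bar m$ act simultaneously on the scalar coefficients and on the three frame factors, and the $\pm\frac{1}{2F},\frac{1}{F},A,\bar B$ constants in (2.4') must be tracked carefully. Once the expansion is laid out, the matching is purely algebraic. Chart independence of i)--ix) follows from the transformation laws (c) of $J,U,\ldots,H$ together with those of $A,B$, by the same argument employed in the proofs of Propositions 3.1 and 3.2.
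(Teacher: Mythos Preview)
Your proposal is correct and follows essentially the same route as the paper: both expand Proposition~2.1~iv) in the local Berwald frame, using the explicit display for $\dot\partial_{\bar h}L_{jk}^i$, the formula $N_k^i=F(Jl^il_k+Vl^im_k+Om^il_k+Em^im_k)$ for the second RHS piece, and the frame expression of the conjugate Cartan tensor for the LHS (the paper obtains the latter via $\Xi_{\bar r j\bar h k}=-C_{j\bar r k|\bar h}$ and (2.8$'''$), then implicitly conjugates, whereas you compute $C_{l\bar r\bar h|k}$ directly---these are equivalent). One small slip: for the $h$-covariant derivative of $C_{l\bar r\bar h}$ you need (2.5$''$) and \emph{its} conjugates, not the conjugates of (2.4$'$), which record vertical rather than horizontal derivatives.
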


Next, we rewrite the identity v) from Proposition 2.1, $\dot{\partial}%
_hL_{jk}^i=C_{j\bar{r}h|k}g^{\bar{r}i}$ with respect to the complex Berwald
frame. Taking into account Proposition 3.1, we have

$\dot{\partial}_hL_{jk}^i=\{[l(U)+\frac 1{2F}U]l^im_jlm_k+[l(X)+\frac
3{2F}X]l^im_jm_k$

$+[l(Y)-\frac 1{2F}Y]m^im_jl_k+[l(H)+\frac 1{2F}H]m^im_jm_k\}l_h$

$+\{[m(U)-A(Y-J)+\frac 12BU-\frac 1FX]l^im_jl_k+[m(Y)+AO-\frac
1F(H-U)]m^im_jl_k$

$+[m(X)+A(U+V-H)+BX]l^im_jm_k$

$+[m(H)+A(Y+E)+\frac 1FX+\frac 12BH]m^im_jm_k\}m_h.$

On the other hand, $C_{j\bar{r}h|k}g^{\bar{r}i}=%
\{[A_{|k}-A(J+Y)l_k-A(V+H)m_k]l^i$

$+[B_{|k}-\frac B2(J+Y)l_k-\frac B2(V+H)m_k]m^i\}m_jm_h.$ From here we obtain

\begin{proposition}
Let $(M,F)$ be a $2$ - dimensional complex Finsler space. Then

i) $l(U)+\frac 1{2F}U=l(X)+\frac 3{2F}X=l(Y)-\frac 1{2F}Y=l(H)+\frac
1{2F}H=0;$

ii) $m(U)-A(Y-J)+\frac 12BU-\frac 1FX=\frac 1FA_{|0}-A(J+Y);$

iii) $m(Y)+AO-\frac 1F(H-U)=\frac 1FB_{|0}-\frac B2(J+Y);$

iv) $m(X)+A(U+V-H)+BX=A_{|k}m^k-A(V+H);$

v) $m(H)+A(Y+E)+\frac 1FX+\frac 12BH=B_{|k}m^k-\frac B2(V+H).$
\end{proposition}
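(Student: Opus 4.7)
The plan is to expand both sides of the identity \(\dot\partial_h L_{jk}^i = g^{\bar r i} C_{j\bar r h|k}\) from Proposition 2.1 v) with respect to the local complex Berwald frame, and then equate coefficients. Since the sixteen tensor products obtained from \(\{l^i,m^i\}\otimes\{l_j,m_j\}\otimes\{l_k,m_k\}\otimes\{l_h,m_h\}\) are linearly independent, each match produces one scalar identity.

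For the left-hand side I would substitute the decomposition (\ref{2.10}) of \(L_{jk}^i\) and apply \(\dot\partial_h=l_h\,l+m_h\,m\) from (\ref{2.3}), differentiating the frame factors \(l^i/m^i,\,l_j/m_j,\,l_k/m_k\) via (\ref{2.4'}). The four coefficients \(J,V,O,E\) that parametrize \(N_k^i\) have their vertical derivatives \(l(J),l(V),l(O),l(E)\) and \(m(J),m(V),m(O),m(E)\) already prescribed by Proposition 3.1; substituting these forces the eight \(l_j\)-coefficients (four in the \(l_h\)-part, four in the \(m_h\)-part) to cancel identically. What remains is precisely the eight-term \(m_j\)-expression displayed just before the statement, whose coefficients contain the unspecified derivatives \(l(U),l(X),l(Y),l(H),m(U),m(X),m(Y),m(H)\).

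For the right-hand side I would lower the index \(i\) in (\ref{2.4}) to obtain \(C_{j\bar r h}=A\,l_{\bar r}m_j m_h+B\,m_{\bar r}m_j m_h\) and then apply \(|_k\). Using (\ref{2.5''}) and its conjugates, which give \(l_{\bar r|k}=0\) and \(m_{\bar r|k}=\frac{1}{2}[(J+Y)l_k+(V+H)m_k]m_{\bar r}\) (with analogous formulas for \(m_{j|k}\) and \(m_{h|k}\)), the covariant derivative reassembles into the compact form
\[
g^{\bar r i}C_{j\bar r h|k}=\{[A_{|k}-A(J+Y)l_k-A(V+H)m_k]l^i+[B_{|k}-\frac{B}{2}(J+Y)l_k-\frac{B}{2}(V+H)m_k]m^i\}m_jm_h,
\]
after raising \(\bar r\) via \(g^{\bar r i}l_{\bar r}=l^i,\;g^{\bar r i}m_{\bar r}=m^i\). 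Decomposing \(A_{|k}=l_k\,l(A)+m_k\,m(A)=\frac{1}{F}A_{|0}\,l_k+(A_{|s}m^s)m_k\), and likewise for \(B_{|k}\), places the right-hand side in the same tensor basis as the left.

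The coefficient matching is then immediate. The right-hand side carries no \(l_h\)-component, so the four \(l_h\)-coefficients on the left must vanish, producing the four equalities of assertion (i); the four \(m_h\)-coefficients, paired with the components \(l^im_jm_hl_k,\,m^im_jm_hl_k,\,l^im_jm_hm_k,\,m^im_jm_hm_k\) on the right, deliver assertions (ii), (iii), (iv), (v) respectively. The main obstacle is the bookkeeping in the left-hand side: verifying the cancellation of all eight \(l_j\)-contributions requires simultaneous use of the four formulas of Proposition 3.1, and one must keep careful track of the sign and normalization in each of the derivation rules (\ref{2.4'}); once this reduction is in place, the identification of coefficients is direct. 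The independence of the conclusion from the choice of local chart follows from the tensorial character of \(\dot\partial_h L_{jk}^i\) and \(C_{j\bar r h|k}\) together with the transformation rules (\ref{c}) of the scalars \(J,U,V,X,O,Y,E,H\), by the same kind of argument used in the preceding propositions.
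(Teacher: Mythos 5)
Your proposal matches the paper's own argument: the authors likewise expand $\dot\partial_h L_{jk}^i$ via (\ref{2.10}), $\dot\partial_h=l_h l+m_h m$, the rules (\ref{2.4'}) and Proposition 3.1 (which kills the $l_j$-components), compute $C_{j\bar rh|k}g^{\bar ri}$ from (\ref{2.4}) and (\ref{2.5''}) to get exactly the expression you display, and read off (i)--(v) by matching coefficients in the frame basis. The approach and all intermediate formulas are correct and essentially identical to the paper's proof.
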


We note that the assertions of Propositions 4.2 and 4.3 are preserved to
changes of local charts.

Now, taking into account (\ref{2.10}), we have $G^i=\frac{F^2}2(Jl^i+Om^i).$
By (\ref{2.4'}) and by Proposition 4.2 i), ii) and vi) result $\dot{\partial}%
_{\bar{h}}G^i=\frac{F^2}2[\bar{m}(O)-\frac 12\bar{B}O]m^im_{\bar{h}}=\frac
F2[\bar{A}_{|0}+F\bar{A}(J+Y)]m^im_{\bar{h}}.$ So, we have proved

\begin{lemma}
For any $2$ - dimensional complex Finsler space, $\dot{\partial}_{\bar{h}%
}G^i=0$ if and only if $\bar{m}(O)=\frac 12\bar{B}O,$ equivalently with $%
\bar{A}_{|0}+F\bar{A}(J+Y)=0.$
\end{lemma}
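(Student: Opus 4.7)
The plan is to compute $\dot{\partial}_{\bar h} G^i$ explicitly by expressing everything in the local Berwald frame. Starting from $G^i = \frac{1}{2} N^i_k \eta^k = \frac{1}{2} L^i_{jk} \eta^j \eta^k$ and substituting the decomposition (\ref{2.10}) of $L^i_{jk}$, I use $\eta^j l_j = F$ and $\eta^j m_j = 0$ to see that only the two terms carrying $l_j l_k$ survive the double contraction, producing the identity $G^i = \frac{F^2}{2}(J l^i + O m^i)$ already recorded in the excerpt.

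Next, by (\ref{2.3}) the vertical derivative decomposes as $\dot{\partial}_{\bar h} = l_{\bar h}\,\bar l + m_{\bar h}\,\bar m$, so I would compute $\bar l(G^i)$ and $\bar m(G^i)$ separately. All the needed ingredients are in hand: the frame identities (\ref{2.4'}) supply $\bar l(l^i)$, $\bar l(m^i)$, $\bar m(l^i)$ and $\bar m(m^i)$; the vertical derivative of $F^2$ follows from $\dot{\partial}_{\bar k} L = g_{i\bar k}\eta^i = F l_{\bar k}$, giving $\bar l(F^2) = F$ and $\bar m(F^2) = 0$; and the derivatives of the scalars $J, O$ are obtained by contracting Proposition 4.2 i) with $l^{\bar k}$ and with $m^{\bar k}$, together with Proposition 4.2 ii) for $\bar l(O)$. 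A Leibniz-rule computation then shows that the contributions in $\bar l(G^i)$ cancel completely, while in $\bar m(G^i)$ the $l^i$-part cancels and only the $m^i$-term survives, producing
\[
\dot{\partial}_{\bar h} G^i \;=\; \frac{F^2}{2}\Bigl[\bar m(O) - \frac{1}{2}\bar B\, O\Bigr]\, m^i m_{\bar h}.
\]

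From here the lemma follows with essentially no further work. Since $m^i m_{\bar h}$ is a nontrivial quantity that is globally well defined (by the remark following (\ref{2.2})), the tensor $\dot{\partial}_{\bar h} G^i$ vanishes if and only if its scalar coefficient does, that is, if and only if $\bar m(O) = \frac{1}{2}\bar B\, O$. Substituting Proposition 4.2 vi), which reads $\bar m(O) - \frac{1}{2}\bar B\, O = \frac{1}{F}\bar A_{|0} + \bar A(J+Y)$, converts this condition into the second equivalent form $\bar A_{|0} + F\bar A(J+Y) = 0$. The only subtlety of the whole argument is the careful bookkeeping of the various vertical-frame identities when applying the Leibniz rule; no genuine obstacle is to be expected.
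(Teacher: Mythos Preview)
Your proposal is correct and follows essentially the same route as the paper: it also derives $G^i=\frac{F^2}{2}(Jl^i+Om^i)$ from (\ref{2.10}), then applies (\ref{2.4'}) together with Proposition~4.2 i), ii) and vi) to reach the formula $\dot{\partial}_{\bar h}G^i=\frac{F^2}{2}[\bar m(O)-\frac{1}{2}\bar B O]\,m^im_{\bar h}=\frac{F}{2}[\bar A_{|0}+F\bar A(J+Y)]\,m^im_{\bar h}$. The paper simply records this outcome in one line, whereas you spell out the Leibniz-rule bookkeeping; the ingredients and logic are the same.
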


\begin{theorem}
If $(M,F)$ is a K\"{a}hler $2$ - dimensional complex Finsler space, then $%
\dot{\partial}_{\bar{h}}G^i=0.$
\end{theorem}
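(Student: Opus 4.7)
The plan is to reduce the statement, via the preceding Lemma, to the single scalar identity $\bar m(O)=\tfrac{1}{2}\bar B O$, and then to extract that identity from the K\"ahler hypothesis by combining two relations already listed in Proposition 4.2.

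First I would invoke Lemma 4.1: it asserts that $\dot\partial_{\bar h}G^i=0$ is equivalent to $\bar m(O)-\tfrac{1}{2}\bar B O=0$ (equivalently, to $\bar A_{|0}+F\bar A(J+Y)=0$). So the whole task reduces to establishing this scalar equation on $\widetilde{T'M}$.

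Next I would use the K\"ahler characterization from Proposition 3.2 i), namely $U=V$ and $Y=E$. Subtracting relation iv) of Proposition 4.2,
\[
\bar m(V)-\tfrac{1}{F}(E-J)+\tfrac{1}{2}\bar B V=0,
\]
from relation iii) of the same proposition,
\[
\bar m(U)-\tfrac{1}{F}(Y-J)+\tfrac{1}{2}\bar B U+FA\bigl[\bar m(O)-\tfrac{1}{2}\bar B O\bigr]=0,
\]
the K\"ahler conditions annihilate the differences $\bar m(U)-\bar m(V)$, $(Y-J)-(E-J)=Y-E$ and $\tfrac{1}{2}\bar B(U-V)$, leaving
\[
FA\bigl[\bar m(O)-\tfrac{1}{2}\bar B O\bigr]=0.
\]

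Finally I would split into two cases. If $A\neq 0$, the desired identity follows at once. If $A=0$, then $\bar A=0$ and Proposition 4.2 vi), namely $\tfrac{1}{F}\bar A_{|0}+\bar A(J+Y)=\bar m(O)-\tfrac{1}{2}\bar B O$, has vanishing left-hand side, so once again $\bar m(O)=\tfrac{1}{2}\bar B O$. In both cases Lemma 4.1 yields $\dot\partial_{\bar h}G^i=0$.

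I do not anticipate a serious obstacle; the argument is short algebraic bookkeeping on already-derived identities. The only point that deserves some care is checking that the cancellation in the subtraction really leaves only the $FA[\,\cdot\,]$ term, so that the full K\"ahler hypothesis (and not merely the weakly K\"ahler condition $U=V$) is what drives the conclusion. Chart independence of the result is automatic, since $\dot\partial_{\bar h}G^i$ is intrinsically defined.
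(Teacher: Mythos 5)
Your argument is correct and is essentially identical to the paper's own proof: the authors likewise subtract Proposition 4.2 iv) from iii) using the K\"ahler relations $U=V$, $Y=E$ of Proposition 3.2 to obtain $FA[\bar m(O)-\tfrac12\bar BO]=0$, then handle the case $A=0$ via Proposition 4.2 vi) and conclude with Lemma 4.1. No differences worth noting.
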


\begin{proof}
By Propositions 3.2, 4.2 iii) and iv) result $FA[\bar{m}%
(O)-\frac 12\bar{B}O]=0.$ So, we have either $A=0$ or $\bar{m}(O)=\frac 12%
\bar{B}O.$ If $A=0,$ by Proposition 4.2 vi) we obtain $\bar{m}(O)=\frac 12\bar{B}O$, which is globally. So
that $\dot{\partial}_{\bar{h}}G^i=0.$ If $\bar{m}(O)=\frac 12\bar{B}O,$ by Lemma 4.1, results $\dot{\partial}_{\bar{%
h}}G^i=0.$
\end{proof}

\begin{remark}
The above theorem shows that in dimension two, the class of the complex
Berwald spaces coincides with the class of K\"{a}hler spaces.
\end{remark}

\begin{theorem}
A $2$ - dimensional complex Finsler space is Berwald if and only if it is
weakly K\"{a}hler and $\dot{\partial}_{\bar{h}}G^i=0$.
\end{theorem}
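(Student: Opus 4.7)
The plan is to reduce the statement to the two tensorial conditions $U=V$ and $Y=E$ characterizing the K\"ahler property (Proposition 3.2), and then to extract these conditions from the hypotheses using the identities in Proposition 4.2 together with Lemma 4.1.

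For the forward implication, if $(M,F)$ is complex Berwald then by Remark 4.1 it is K\"ahler, hence weakly K\"ahler by Proposition 3.2, and $\dot{\partial}_{\bar{h}}G^i=0$ by Theorem 4.3. This direction is immediate and only packages earlier results.

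The converse is the substantive half. I would assume weakly K\"ahler, i.e.\ $U=V$ by Proposition 3.2 (ii), and $\dot{\partial}_{\bar{h}}G^i=0$. By Lemma 4.1 the second hypothesis is equivalent to $\bar{m}(O)=\tfrac{1}{2}\bar{B}O$, which makes the bracket $FA[\bar{m}(O)-\tfrac{1}{2}\bar{B}O]$ appearing in Proposition 4.2 (iii) vanish identically. Proposition 4.2 (iii) then collapses to
\begin{equation*}
\bar{m}(U)-\tfrac{1}{F}(Y-J)+\tfrac{1}{2}\bar{B}U=0,
\end{equation*}
while Proposition 4.2 (iv), with $V=U$ substituted, reads
\begin{equation*}
\bar{m}(U)-\tfrac{1}{F}(E-J)+\tfrac{1}{2}\bar{B}U=0.
\end{equation*}
Subtracting these two relations gives $Y=E$. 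Combined with $U=V$, this is exactly the K\"ahler condition by Proposition 3.2 (i), so by Remark 4.1 the space is complex Berwald.

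I do not expect a serious obstacle: the proof is really an algebraic match between the cancellation enforced by Lemma 4.1 and the difference of the two lines of Proposition 4.2. The only delicate point is to ensure that the conclusion $Y=E$ is preserved under changes of local chart, but this is guaranteed by the already established geometric character of Propositions 4.2 and 3.2 and of $U-V$, $Y-E$ under the transformation rules \eqref{c}, so no separate invariance verification is needed.
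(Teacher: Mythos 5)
Your proof is correct and follows essentially the same route as the paper: the paper's sufficiency argument likewise combines Proposition 3.2 ii) ($U=V$), Lemma 4.1 (killing the $FA[\bar m(O)-\tfrac12\bar B O]$ term), and the two identities $\bar{m}(V)-\tfrac1F(Y-J)+\tfrac12\bar{B}V=0$ and $\bar{m}(V)-\tfrac1F(E-J)+\tfrac12\bar{B}V=0$ to conclude $Y=E$, hence K\"ahler, hence Berwald. (The paper cites ``Propositions 3.2 ii), 4.3 iii) and iv)'' but the displayed identities are plainly those of Proposition 4.2 iii) and iv), which is what you correctly used.)
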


\begin{proof}
The necessity is obvious. For sufficiency, using
Propositions 3.2. ii), 4.3 iii) and iv) and Lemma 4.1, it results $\bar{m}%
(V)-\frac 1F(Y-J)+\frac 12\bar{B}V=0$ and $\bar{m}(V)-\frac 1F(E-J)+\frac 12%
\bar{B}V=0.$ From here we obtain $Y=E,$ i.e. the space is K\"{a}hler, and
therefore Berwald.
\end{proof}

\begin{proposition}
If $(M,F)$ is a $2$ - dimensional complex Berwald space, then
\begin{eqnarray}
U|_{\bar{k}} &=&\frac 1{2F}Ul_{\bar{k}}+[\frac 1F(Y-J)-\frac 12\bar{B}U]m_{%
\bar{k}};\;Y|_{\bar{k}}=-\frac 1{2F}Yl_{\bar{k}}-\frac 1FOm_{\bar{k}};\;
\nonumber \\
O|_{\bar{k}} &=&-\frac 3{2F}Ol_{\bar{k}}+\frac 12\bar{B}Om_{\bar{k}};\;X|_{%
\bar{k}}=\frac 1{2F}Xl_{\bar{k}}+[\frac 1F(H-2V)-\bar{B}X]m_{\bar{k}};\;
\nonumber \\
H|_{\bar{k}} &=&\frac 1{2F}Hl_{\bar{k}}-(\frac 2FY+\frac 12\bar{B}H)m_{\bar{k%
}};  \label{IV.1}
\end{eqnarray}
equivalently with
\begin{eqnarray}
A_{|\bar{k}} &=&-A(\bar{J}+\bar{Y})l_{\bar{k}}-A(\bar{V}+\bar{H})m_{\bar{k}};
\label{IV.2} \\
B_{|\bar{k}} &=&-\frac B2(\bar{J}+\bar{Y})l_{\bar{k}}-\frac B2(\bar{V}+\bar{H%
})m_{\bar{k}};  \nonumber
\end{eqnarray}
equivalently with
\begin{eqnarray}
U|_k &=&-\frac 1{2F}Ul_k+[A(Y-J)-\frac 12BU+\frac 1FX]m_k;\;  \label{IV.3} \\
Y|_k &=&\frac 1{2F}Yl_k+[\frac 1F(H-U)-AO]m_k;\;  \nonumber \\
X|_k &=&-\frac 3{2F}Xl_k-[2AU-AH+BX]m_k;  \nonumber \\
H|_k &=&-\frac 1{2F}Hl_k-[2AY+\frac 12BH+\frac 1FX]m_k;  \nonumber
\end{eqnarray}
equivalently with
\begin{eqnarray}
A_{|k} &=&A(J+Y)l_k+A(V+H)m_k;  \label{IV.4} \\
B_{|k} &=&\frac B2(J+Y)l_k+\frac B2(V+H)m_k.  \nonumber
\end{eqnarray}
\end{proposition}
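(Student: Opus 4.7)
The plan is to combine three facts established earlier in the paper. In dimension two a complex Berwald space is K\"ahler (Remark 4.1), which by Proposition 3.2 forces $U=V$ and $Y=E$. By Theorem 4.3 we also have $\dot{\partial}_{\bar{h}}G^i=0$, and Lemma 4.1 then identifies this with $\bar{m}(O)=\frac{1}{2}\bar{B}O$ and $\bar{A}_{|0}+F\bar{A}(J+Y)=0$. Finally, a Berwald space satisfies $\Xi_{\bar{r}j\bar{h}k}=P_{\bar{r}j\bar{h}k}=0$, by Theorem 2.2 iv) together with Proposition 2.1 iii) and the symmetry $P_{\bar{j}i\bar{h}k}=\Xi_{i\bar{j}k\bar{h}}$.

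First I would prove (IV.2). Setting $\Xi_{\bar{r}j\bar{h}k}=0$ in the expansion (2.8''') and factoring out $m_{j}m_{k}$ reduces the problem to the vanishing of a bracket in $\bar{r},\bar{h}$; contracting separately with $l^{\bar{r}}$ and $m^{\bar{r}}$ extracts the two formulas of (IV.2). Conjugating the second and contracting with $\eta^{k}$ supplies the auxiliary identity $\bar{B}_{|0}=-\frac{F\bar{B}}{2}(J+Y)$, which I will use below. For (IV.1) I then expand $\phi|_{\bar{k}}=l_{\bar{k}}\bar{l}(\phi)+m_{\bar{k}}\bar{m}(\phi)$ for $\phi\in\{U,Y,O,X,H\}$, reading $\bar{l}(\phi)$ off Proposition 4.2 ii). For the $\bar{m}(\phi)$ components I feed $U=V$, $Y=E$ and $\bar{m}(O)=\frac{1}{2}\bar{B}O$ into Proposition 4.2 iii) (to obtain $\bar{m}(U)$), into vii) using $\bar{B}_{|0}$ (to obtain $\bar{m}(Y)=-O/F$), into viii) using the conjugate of (IV.2) for $\bar{A}_{|k}m^{k}$ (to obtain $\bar{m}(E)=-O/F$), into v) (to obtain $\bar{m}(X)$), and into ix) (to obtain $\bar{m}(H)$). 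The outputs line up with (IV.1) entry by entry.

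The holomorphic half (IV.3)--(IV.4) is derived by an entirely parallel argument, with Proposition 4.3 replacing Proposition 4.2 and the Berwald vanishing of $P_{\bar{r}j\bar{h}k}$ replacing that of $\Xi$. Specifically, (IV.4) comes from (2.8'') (the contracted form of $P_{\bar{r}0\bar{h}k}=0$) for the $A$-part and from (2.9''') for the $B$-part, and (IV.3) then follows from Proposition 4.3 ii)--v) after inserting (IV.4) and the K\"ahler identities.

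The four equivalences fall out of the same derivations read backwards: given (IV.1), substitution into Proposition 4.2 vi) forces $\bar{m}(O)=\frac{1}{2}\bar{B}O$, hence $\dot{\partial}_{\bar{h}}G^i=0$ by Lemma 4.1, and (2.8''') then reproduces (IV.2); the analogous chain with Proposition 4.3 and (2.8'') delivers (IV.3)$\Leftrightarrow$(IV.4); finally (IV.2)$\Leftrightarrow$(IV.4) follows by complex conjugation together with the K\"ahler identities $U=V$, $Y=E$, which tie the $\delta_{k}$- and $\delta_{\bar{k}}$-transcripts of the $A,B$-derivatives. The main obstacle is bookkeeping rather than depth: the nine scalar identities of Proposition 4.2 (and of Proposition 4.3) must be reduced in a definite order (K\"ahler first, then $\bar{m}(O)=\frac{1}{2}\bar{B}O$, then the $\bar{B}_{|0}$-contraction), and one must check at each stage that the input identities are genuinely available rather than logically equivalent to the target.
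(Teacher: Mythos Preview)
Your derivations of (IV.1)--(IV.4) as necessary conditions are correct and close in spirit to the paper's argument, but you take a longer route than necessary. The paper exploits the Berwald \emph{definition} $L_{jk}^i=L_{jk}^i(z)$ directly: this gives $\dot{\partial}_{\bar{h}}L_{jk}^i=0$, so the big expansion of $\dot{\partial}_{\bar{h}}L_{jk}^i$ displayed just before Proposition~4.2 vanishes term by term, and the $m_{\bar{h}}$-coefficients immediately hand you $\bar{m}(U),\bar{m}(Y),\bar{m}(O),\bar{m}(X),\bar{m}(H)$ without any substitution of (IV.2) or of $\bar{B}_{|0}$. You instead pass through $\Xi=0$ to get (IV.2) first and then feed it back into Proposition~4.2 iii)--ix); this works, but Proposition~4.2 iii)--ix) was obtained precisely by equating that same expansion with the $C_{j\bar{r}k|\bar{h}}$-side, so you are undoing and redoing one computation. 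The same remark applies to (IV.3)--(IV.4) versus $\dot{\partial}_{h}L_{jk}^i=0$ and the display before Proposition~4.3.

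There is one genuine gap: your justification of (IV.2)$\Leftrightarrow$(IV.4) ``by complex conjugation together with the K\"ahler identities'' does not go through. Conjugating (IV.2) produces formulas for $\bar{A}_{|k}$ and $\bar{B}_{|k}$, not for $A_{|k}$ and $B_{|k}$; the K\"ahler relations $U=V$, $Y=E$ do not convert one into the other. The correct link is the one the paper invokes, namely Theorem~2.2 iv): under the K\"ahler hypothesis, $C_{l\bar{r}h|\bar{k}}=0$ and $C_{l\bar{r}h|k}=0$ are each equivalent to the Berwald condition and hence to one another, and these two vanishings are exactly (IV.2) and (IV.4) via (2.8''') and the display before Proposition~4.3.
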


\begin{proof}
Under the assumption of Berwald, we have $\dot{\partial}_{\bar{h}%
}G^i=\dot{\partial}_{\bar{h}}N_k^i=\dot{\partial}_{\bar{h}}L_{jk}^i=0$ which
together with Proposition 4.2 induces (\ref{IV.1}). Using Theorem 2.2 and
Propositions 4.2 and 4.3 it results the equivalence between (\ref{IV.1}), (%
\ref{IV.2}), (\ref{IV.3}) and (\ref{IV.4}). By straightforward computations it results their global validity.
\end{proof}

We note that the equivalent sets of relations (\ref{IV.1}), (\ref{IV.2}), (\ref{IV.3}%
) and (\ref{IV.4}) have a geometric character and are only necessary
conditions for complex Berwald space. These become sufficient together
with weakly K\"{a}hler condition.

Trivial examples of complex Berwald spaces are given by the purely Hermitian
and locally Minkowski manifolds. An nontrivial example of $2$ - dimensional
complex Berwald space is welcomed.

Let $\Delta =\left\{ (z,w)\in \mathbf{C}^2,\;|w|<|z|<1\right\} $ be the
Hartogs triangle with the K\"{a}hler-purely Hermitian metric
\begin{equation}
a_{i\overline{j}}=\frac{\partial ^2}{\partial z^i\partial \overline{z}^j}%
(\log \frac 1{\left( 1-|z|^2\right) \left( |z|^2-|w|^2\right) });\mbox{
}\alpha ^2(z,w;\eta ,\theta )=a_{i\overline{j}}\eta ^i\overline{\eta }^j,\;
\label{III.10}
\end{equation}
where $z,$ $w,\eta ,$ $\theta $ are the local coordinates $z^1,$ $z^2,$ $%
\eta ^1,$ $\eta ^2,$ respectively, and $|z^i|^2:=z^i\bar{z}^i,$ $z^i\in
\{z,w\},$ $\eta ^i\in \{\eta ,\theta \}.$ We choose
\begin{equation}
b_z=\frac w{|z|^2-|w|^2};\;b_w=-\frac z{|z|^2-|w|^2}.  \label{III.11}
\end{equation}
With these tools we construct $\alpha (z,w,\eta ,\theta ):=\sqrt{a_{i\bar{j}%
}(z,w)\eta ^i\bar{\eta}^j}$ and $\beta (z,\eta )=b_i(z,w)\eta ^i$ and from
here we obtain the complex Randers metric $F=\alpha +|\beta |$ and the
complex Kropina metric $F:=\frac{\alpha ^2}{|\beta |}.$ By a direct
computation, we deduce
\begin{eqnarray}
a_{z\overline{z}} &=&\frac 1{\left( 1-|z|^2\right) ^2}+b_zb_{\bar{z}};\;a_{z%
\overline{w}}=b_zb_{\bar{w}};\;a_{w\overline{w}}=b_wb_{\bar{w}};
\label{III.12} \\
a^{\overline{z}z} &=&\left( 1-|z|^2\right) ^2;\;a^{\overline{w}z}=\frac{%
\overline{w}z\left( 1-|z|^2\right) ^2}{|z|^2};  \nonumber \\
a^{\overline{w}w} &=&\frac{\left( |z|^2-|w|^2\right) ^2}{|z|^2}+\frac{%
|w|^2\left( 1-|z|^2\right) ^2}{|z|^2};  \nonumber \\
b^z &=&0;\;b^w=-\frac{|z|^2-|w|^2}z;\;||b||^2=1;\;\alpha ^2-|\beta |^2=\frac{%
|\eta |^2}{\left( 1-|z|^2\right) ^2}  \nonumber
\end{eqnarray}
and the horizontal coefficients of the $C-F$ connection are
\begin{eqnarray}
L_{zz}^z &=&\frac{2\overline{z}}{1-|z|^2}\;;\;L_{zw}^z=L_{wz}^z=0\;;%
\;L_{zz}^w=\frac 1{1-|z|^2}+\frac 1{|z|^2-|w|^2}\;;\;  \nonumber \\
L_{zw}^w &=&L_{wz}^w=-\frac{|z|^2+|w|^2}{z\left( |z|^2-|w|^2\right) }%
\;;\;L_{ww}^w=\frac{2\overline{w}}{|z|^2-|w|^2}.  \nonumber
\end{eqnarray}
which attest the K\"{a}hler property. The spray coefficients
\[
\,G^z=\frac{\overline{z}\eta ^2}{1-|z|^2}\;;\;G^w=\frac{\overline{z}w}%
z\left( \frac 1{1-|z|^2}+\frac 1{|z|^2-|w|^2}\right) \eta ^2+\frac{\overline{%
w}}{|z|^2-|w|^2}\theta ^2
\]
are holomorphic in $\eta .$ Moreover, $K_{F,m}^v(z,\eta )=-\frac 2{F^2}<0.$

\begin{proposition}
If $(M,F)$ is a $2$ - dimensional complex Berwald space, then $\mathbf{I}%
_{|j}=0.$
\end{proposition}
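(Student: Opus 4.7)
The plan is to extract $\mathbf{I}_{|j}$ from the decomposition $S_{\bar r j\bar h k}=\mathbf{I}\,m_{\bar h}m_{\bar r}m_jm_k$ of (\ref{2.5'}) by applying the horizontal operator $\delta_p$ to both sides. Setting $\alpha_p:=(J+Y)l_p+(V+H)m_p$, formula (\ref{2.5''}) gives $m_{j|p}=-\tfrac{1}{2}\alpha_p m_j$, and the companion identity $m_{\bar h|p}=+\tfrac{1}{2}\alpha_p m_{\bar h}$ (with the opposite sign) is obtained by applying $\delta_p$ to the decomposition $g_{i\bar j}=l_il_{\bar j}+m_im_{\bar j}$ of (\ref{2.3}) and using $g_{i\bar j|p}=0$ together with $l_{i|p}=l_{\bar j|p}=0$. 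Leibniz then forces $(m_{\bar h}m_{\bar r}m_jm_k)_{|p}=(\tfrac{1}{2}+\tfrac{1}{2}-\tfrac{1}{2}-\tfrac{1}{2})\alpha_p\,m_{\bar h}m_{\bar r}m_jm_k=0$, so $S_{\bar r j\bar h k|p}=\mathbf{I}_{|p}\,m_{\bar h}m_{\bar r}m_jm_k$ in general, and contracting with $m^{\bar h}m^{\bar r}m^jm^k$ reduces the claim to $S_{\bar r j\bar h k|p}=0$.

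To prove this vanishing under the Berwald hypothesis, I would exploit the defining property $L^i_{jk}=L^i_{jk}(z)$ through three consequences. First, $\dot\partial_{\bar h}L^i_{jk}=0$ and $\dot\partial_{\bar h}N^i_k=0$, which by Proposition 2.1(ii),(vi) give $P^i_{j\bar h k}\equiv 0$ and, via the argument of Proposition 2.2(ii), make the operator commutator $[\delta_p,\dot\partial_{\bar h}]$ vanish identically. Second, $\dot\partial_p L^i_{jk}=0$, which via Proposition 2.1(v) forces $C^i_{lk|p}=0$. Starting from $S^i_{j\bar h k}=-\dot\partial_{\bar h}C^i_{jk}$ (Proposition 2.1(iii), where the $v$-covariant derivative collapses to $\dot\partial_{\bar h}$ because all mixed Chern-Finsler coefficients vanish) and applying $_{|p}$, the Leibniz rule for the Chern-Finsler $h$-covariant derivative produces $\delta_p$ acting on $\dot\partial_{\bar h}C^i_{jk}$ together with three $L^i_{sp}$-correction terms; using the two facts just stated to commute $\dot\partial_{\bar h}$ past $\delta_p$ and past each $L$, one arrives at $S^i_{j\bar h k|p}=-\dot\partial_{\bar h}(C^i_{jk|p})=0$, and lowering an index yields $S_{\bar r j\bar h k|p}=0$, hence $\mathbf{I}_{|p}=0$.

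The main obstacle is the second step, specifically the verification that $\dot\partial_{\bar h}$ can be pulled across the full operator $()_{|p}$ acting on the Cartan tensor. The scalar-level commutativity is immediate from Proposition 2.2(ii) once $P=0$, but at the tensor level one must separately check the $L$-correction terms, where the distinctively Berwald property $\dot\partial_{\bar h}L^i_{sp}=0$ is essential. A fully computational alternative, self-contained within the excerpt, would differentiate $\mathbf{I}=-B|_{\bar s}m^{\bar s}-\tfrac{1}{2}B\bar B$ directly: using Proposition 4.2(i), the Berwald identities (\ref{IV.1})--(\ref{IV.4}), and the K\"ahler equality $Y=E$ from Theorems 4.3--4.4, the four terms produced by $\delta_j$ cancel pairwise (the pieces $B_{|j}\bar B$ and $B\bar B_{|j}$ add to zero by (\ref{IV.4}) and its conjugate, while $-\delta_j(B|_{\bar s})m^{\bar s}$ and $-B|_{\bar s}m^{\bar s}_{|j}$ both reduce to $\pm\tfrac{1}{2}(\mathbf{I}+\tfrac{B\bar B}{2})\alpha_j$ and cancel), yielding $\mathbf{I}_{|j}=0$ without any tensor commutator argument.
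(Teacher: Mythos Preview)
Your proposal is correct, but the paper takes a shorter route. Instead of working with the full tensor $S_{\bar r j\bar h k}$, the paper applies Proposition~2.2(ii) directly to the scalar $B$: in a Berwald space $P^i_{0\bar k j}=0$, so the commutator $B|_{\bar k|j}-B_{|j}|_{\bar k}$ vanishes; a short computation using (\ref{2.4''}), (\ref{2.4'''}), (\ref{2.5'}), (\ref{2.5''}) and the Berwald identity (\ref{IV.4}) then shows that this same commutator equals $-\mathbf{I}_{|j}m_{\bar k}$, and the result follows at once.

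Your first approach, by contrast, lifts the argument to the level of $S$ itself: the observation $(m_{\bar h}m_{\bar r}m_jm_k)_{|p}=0$ reduces the claim to $S_{\bar r j\bar h k|p}=0$, and then the Berwald hypothesis (holomorphic $L^i_{jk}$, $P=0$, $C^i_{jk|p}=0$) makes $\dot\partial_{\bar h}$ commute with the full $h$-covariant derivative on $C^i_{jk}$, yielding $S^i_{j\bar h k|p}=-\dot\partial_{\bar h}(C^i_{jk|p})=0$. This is a perfectly valid and more conceptual proof---it explains geometrically that the entire $v\bar v$-curvature is $h$-parallel in a Berwald space, of which $\mathbf I_{|p}=0$ is the scalar shadow. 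The trade-off is that it requires the tensor-level commutator bookkeeping you flagged as an obstacle, whereas the paper's version stays at the scalar level throughout and needs only the ready-made identity of Proposition~2.2(ii). Your second, computational alternative is closer in spirit to the paper, though still organized around $\mathbf I$ rather than the cleaner object $B$.
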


\begin{proof}
Firstly, because the space is Berwald, the identity ii) from
Proposition 2.2 is $B|_{\bar{k}|j}-B_{|j}|_{\bar{k}}=0.$ On the other hand,
the relations (\ref{2.4''}), (\ref{2.4'''}), (\ref{2.5''}), (\ref{2.5'}) and
(\ref{IV.4}) lead to $B|_{\bar{k}|j}-B_{|j}|_{\bar{k}}=-\mathbf{I}_{|j}m_{%
\bar{k}}.$ So, $\mathbf{I}_{|j}=0.$
\end{proof}

The converse of the above Proposition is not true. There exist $2$ -
dimensional complex Finsler spaces with $\mathbf{I}_{|j}=0$ which are not
Berwald. We attest this fact by an example. Namely, we consider the complex
version of \textit{Antonelli - Shimada }metric
\begin{equation}
F_{AS}^2=L_{AS}(z,w;\eta ,\theta ):=e^{2\sigma }\left( |\eta |^4+|\theta
|^4\right) ^{\frac 12},\;\mbox{with}\;\;\eta ,\theta \neq 0,\;\;
\label{IV.9}
\end{equation}
on a domain $D$ from $\widetilde{T^{\prime }M},$ $\dim _CM=2,$ such that its
metric tensor is nondegenerated. We relabeled the local coordinates $z^1,$ $%
z^2,$ $\eta ^1,$ $\eta ^2$ as $z,$ $w,\eta ,$ $\theta ,$ respectively. $%
\sigma (z,w)$ is a real valued function and $|\eta ^i|^2:=\eta ^i\overline{%
\eta }^i,$ $\eta ^i\in \{\eta ,\theta \}$, (\cite{Mub}).

A direct computation leads to
\begin{eqnarray*}
g_{z\overline{z}} &:&=g_{1\overline{1}}=\frac{e^{8\sigma }|\eta |^2(|\eta
|^4+2|\theta |^4)}{L_{AS}^3};\;g^{\overline{z}z}:=g^{\overline{1}1}=\frac{%
2|\eta |^4+|\theta |^4}{2|\eta |^2L_{AS}}; \\
g_{z\overline{w}} &:&=g_{1\overline{2}}=-\frac{e^{8\sigma }|\eta |^2|\theta
|^2\overline{\eta }\theta }{L_{AS}^3};\;g^{\overline{w}z}:=g^{\overline{2}1}=%
\frac{\eta \overline{\theta }}{2L_{AS}}; \\
g_{w\overline{w}} &:&=g_{2\overline{2}}=\frac{e^{8\sigma }|\theta |^2(2|\eta
|^4+|\theta |^4)}{L_{AS}^3};\;g^{\overline{w}w}:=g^{\overline{2}2}=\frac{%
|\eta |^4+2|\theta |^4}{2|\theta |^2L_{AS}}; \\
\Delta ^2 &=&\det \left( g_{i\overline{j}}\right) =\frac{2e^{8\sigma }|\eta
|^2|\theta |^2}{L_{AS}^2}; \\
l^z &:&=l^1=\frac \eta {F_{AS}};\;l^w:=l^2=\frac \theta {F_{AS}}; \\
l_z &:&=l_1=\frac{e^{4\sigma }|\eta |^2\overline{\eta }}{F_{AS}^3}%
;\;l_w:=l_2=\frac{e^{4\sigma }|\theta |^2\overline{\theta }}{F_{AS}^3};\; \\
m^z &:&=m^1=-\frac{|\theta |\overline{\theta }}{\sqrt{2}|\eta |F_{AS}}%
;\;m^w:=m^2=\frac{|\eta |\overline{\eta }}{\sqrt{2}|\theta |F_{AS}}; \\
m_z &:&=m_1=-\frac{2e^{4\sigma }|\eta ||\theta |\theta }{\sqrt{2}F_{AS}^3}%
;\;m_w:=m_2=\frac{2e^{4\sigma }|\eta ||\theta |\eta }{\sqrt{2}F_{AS}^3}.
\end{eqnarray*}

The nonzero coefficients of the $C-F$ connection are
\begin{eqnarray}
L_{zz}^z &=&L_{wz}^w=2\frac{\partial \sigma }{\partial z}%
;\;L_{zw}^z=L_{ww}^w=2\frac{\partial \sigma }{\partial w};  \label{IV.11} \\
C_{zz}^z &=&\frac{e^{8\sigma }|\theta |^8\overline{\eta}}{|\eta |^2L_{AS}^4}%
;\;C_{zw}^z=C_{wz}^z=-\frac{e^{8\sigma }|\theta |^6\overline{\theta}}{%
L_{AS}^4};\;C_{ww}^z=\frac{e^{8\sigma }|\theta |^4\overline{\theta}^2\eta }{%
L_{AS}^4};  \nonumber \\
C_{ww}^w &=&\frac{e^{8\sigma }|\eta |^8\overline{\theta}}{|\theta |^2L_{AS}^4%
};\;C_{zw}^w=C_{wz}^w=-\frac{e^{8\sigma }|\eta |^6\overline{\eta}}{L_{AS}^4}%
;\;C_{zz}^w=\frac{e^{8\sigma }|\eta |^4\overline{\eta}^2\theta }{L_{AS}^4}.
\nonumber
\end{eqnarray}

From here we obtain
\[
A=\frac{\overline{\eta }^2\overline{\theta }^2}{2|\eta |^2|\theta |^2F_{AS}}%
;\;B=\frac{\overline{\eta }\overline{\theta }(|\eta |^4-|\theta |^4)}{\sqrt{2%
}|\eta |^3|\theta |^3F_{AS}}\neq 0;\;\mathbf{I}=\frac 2{L_{AS}};\;
\]
and so $\mathbf{I}_{|k}=0.$ Moreover, $K_{F_{AS},m}^v(z,\eta )=\frac
4{L_{AS}}>0.$ The local coefficients $L_{jk}^i$ depend only on $z$ and $w$,
but the Antonelli - Shimada metric is not Berwald because, in generally, it
is not K\"{a}hler. If $\sigma $ is a constant, then the Antonelli - Shimada
metric is Berwald and locally Minkowski.

The above examples suggest us to pay attention to the class of $2$ -
dimensional complex Finsler spaces with $\mathbf{I}|_i=-\frac{\mathbf{I}}%
Fl_i $ and $\mathbf{I}_{|k}=0.$ With these assumptions we obtain $\frac{%
\partial \mathbf{I}}{\partial z^k}=N_k^r\mathbf{I}|_r=-\frac{\mathbf{I}}%
Fl_rN_k^r=-\frac{\mathbf{I}}L\frac{\partial L}{\partial z^k}.$ From here it
results $L\frac{\partial \mathbf{I}}{\partial z^k}+\mathbf{I}\frac{\partial L%
}{\partial z^k}=0$ and so, $\frac{\partial (\mathbf{I}L)}{\partial z^k}=0.$
Thus, we have proved

\begin{theorem}
Let $(M,F)$ be a connected 2 - dimensional complex Finsler space with $%
\mathbf{I}_{|k}=0,$ $\mathbf{I}|_i=-\frac{\mathbf{I}}Fl_i$ and $AB^2\neq 0.$
Then $\mathbf{I}L$ is a constant on $(M,F)$ and $K_{F,m}^v(z,\eta )=\frac{2c}%
L,$ where $c\in \mathbf{R}.$
\end{theorem}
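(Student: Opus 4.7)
The plan is to establish that $\mathbf{I}L$ is annihilated by each of the four local derivations $\partial/\partial z^k$, $\partial/\partial \bar z^k$, $\partial/\partial\eta^k$, $\partial/\partial\bar\eta^k$, and then to invoke connectedness of $M$. Once $\mathbf{I}L=c$ is shown, the curvature formula is immediate from Theorem~4.1~ii), which identifies $K_{F,m}^v$ with $2\mathbf{I}$.

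The horizontal part is essentially the computation already displayed in the paragraph preceding the statement. I would expand $\mathbf{I}_{|k}=\delta_k\mathbf{I}=\partial\mathbf{I}/\partial z^k-N_k^r\dot{\partial}_r\mathbf{I}$, substitute both hypotheses $\mathbf{I}_{|k}=0$ and $\mathbf{I}|_r=-\mathbf{I}l_r/F$, and use the identity $l_rN_k^r=F^{-1}\partial L/\partial z^k$ (which follows directly from the local expression $N_k^r=g^{\bar m r}(\partial g_{s\bar m}/\partial z^k)\eta^s$, from $l_r=F^{-1}g_{r\bar j}\bar\eta^j$, and from $L=g_{s\bar m}\eta^s\bar\eta^m$) to obtain $\partial(\mathbf{I}L)/\partial z^k=0$. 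Since $\mathbf{I}$ is real valued by Theorem~4.1~ii), taking the complex conjugate of the hypothesis yields $\mathbf{I}|_{\bar r}=-\mathbf{I}l_{\bar r}/F$ and $\mathbf{I}_{|\bar k}=0$, so the same argument applied to the antiholomorphic variables gives $\partial(\mathbf{I}L)/\partial\bar z^k=0$.

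For the vertical derivatives I would appeal to the homogeneity consequence $\partial L/\partial\eta^k=Fl_k$ (which comes from the relations recorded just after condition iv) in Section~2) and compute
\[
\dot{\partial}_k(\mathbf{I}L)=\mathbf{I}|_k\cdot L+\mathbf{I}\cdot\dot{\partial}_kL=-\frac{\mathbf{I}}{F}l_k\cdot L+\mathbf{I}\cdot Fl_k=0,
\]
and symmetrically $\dot{\partial}_{\bar k}(\mathbf{I}L)=0$. Combined with the previous paragraph, these four vanishings force $\mathbf{I}L$ to be locally constant on $T'M$; connectedness of $M$ then gives $\mathbf{I}L=c$ for a real constant $c$, after which the identity $K_{F,m}^v=2\mathbf{I}=2c/L$ is just division. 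There is no genuine obstacle here--the argument is an elementary manipulation of the defining formulas--the only item worth flagging is that the hypothesis is phrased in terms of Chern--Finsler covariant derivatives rather than in terms of plain partials, so the vertical vanishings have to be checked separately using homogeneity, and the standing hypothesis $AB^2\neq 0$ is used only implicitly through the constructions of Section~4.1 that gave meaning to $\mathbf{I}$.
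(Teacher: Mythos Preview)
Your proof is correct and follows essentially the same route as the paper: both arguments first use $\mathbf{I}_{|k}=0$ together with $\mathbf{I}|_i=-\mathbf{I}l_i/F$ and the identity $l_rN_k^r=F^{-1}\partial L/\partial z^k$ to obtain $\partial(\mathbf{I}L)/\partial z^k=0$, and then use $\mathbf{I}|_i=-\mathbf{I}l_i/F$ with $\dot{\partial}_iL=Fl_i$ to kill the vertical derivatives. The paper merely orders the two steps sequentially (first concluding $\mathbf{I}L=c(\eta,\bar\eta)$, then showing $c|_i=0$), whereas you check all four partials in parallel; the content is the same.
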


\begin{proof}
Indeed, from the above considerations we have $\frac{\partial (\mathbf{I}L)}{%
\partial z^k}=0.$ Therefore, $\mathbf{I}L$ does not depend on $z.$ Hence $%
\mathbf{I}L=c(\eta ,\bar{\eta}),$ where $c(\eta ,\bar{\eta})$ is real
valued. Differentiating, we obtain $\mathbf{I}|_iL+F\mathbf{I}l_i=c|_i.$
But, $\mathbf{I}|_i=-\frac{\mathbf{I}}Fl_i.$ Hence $c|_i=0$ and its
conjugate, which means that $c$ is a constant. It results that $\mathbf{I}%
=\frac cL.$
\end{proof}

In order to investigate $2$ - dimensional complex Landsberg spaces, we
translate the $R\Gamma $ and $B\Gamma $ connections in terms of the local
complex Berwald frames. After some computations we obtain
\begin{eqnarray}
\stackrel{c}{L_{jk}^i} &=&Jl^il_jl_k+\frac{U+V}2(l^im_jl_k+l^il_jm_k)+[X-%
\frac{FA}2(Y-E)]l^im_jm_k  \label{IV.12} \\
&&+Om^il_jl_k+\frac{Y+E}2(m^im_jl_k+m^il_jm_k)+[H-\frac{FB}2(Y-E)]m^im_jm_k
\nonumber
\end{eqnarray}
and
\begin{eqnarray}
\stackrel{B}{L_{jk}^i} &=&Jl^il_jl_k+\frac{U+V}2(l^im_jl_k+l^il_jm_k)
\label{IV.13} \\
&&+\{X+\frac 12[A_{|0}-FA(J+Y)]\}l^im_jm_k+Om^il_jl_k  \nonumber \\
&&+\frac{Y+E}2(m^im_jl_k+m^il_jm_k)+\{H+\frac 12[B_{|0}-\frac{FB}%
2(J+Y)]\}m^im_jm_k.  \nonumber
\end{eqnarray}

By (\ref{IV.12}), (\ref{IV.13}) and (\ref{c}) immediately results

\begin{theorem}
A $2$ - dimensional complex Finsler space is Landsberg if and only if $%
FA(E-Y)=A_{|0}-FA(J+Y)\;$and$\;FB(E-Y)=B_{|0}-\frac{FB}2(J+Y).$
\end{theorem}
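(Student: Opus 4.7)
The plan is to obtain the Landsberg characterization by a direct term-by-term comparison of the expansions (\ref{IV.12}) and (\ref{IV.13}) of the Rund and Berwald connection coefficients in the local complex Berwald frame. Since $(M,F)$ is Landsberg precisely when $\stackrel{B}{L_{jk}^i}=\stackrel{c}{L_{jk}^i}$, and since $\{l,m,\bar{l},\bar{m}\}$ is a frame of $VT^{\prime}M$ in a fixed chart, the eight tensor products $l^il_jl_k$, $l^im_jl_k$, $l^il_jm_k$, $l^im_jm_k$, $m^il_jl_k$, $m^im_jl_k$, $m^il_jm_k$, $m^im_jm_k$ that appear in the decomposition (\ref{2.10}) are linearly independent, so one can identify coefficients one at a time.

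First I would write down both $\stackrel{c}{L_{jk}^i}$ and $\stackrel{B}{L_{jk}^i}$ in the form (\ref{2.10}) and subtract. The terms in $J$, $O$, $(U+V)/2$, $(Y+E)/2$ cancel identically, leaving only the two blocks indexed by $l^im_jm_k$ and $m^im_jm_k$. Equating the $l^im_jm_k$-coefficients yields
\begin{equation*}
X-\tfrac{FA}{2}(Y-E)=X+\tfrac{1}{2}[A_{|0}-FA(J+Y)],
\end{equation*}
which simplifies to $FA(E-Y)=A_{|0}-FA(J+Y)$. Similarly, equating the $m^im_jm_k$-coefficients gives $FB(E-Y)=B_{|0}-\tfrac{FB}{2}(J+Y)$. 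This yields the necessity of both conditions, and the sufficiency is immediate by reading the same equalities backwards: if both identities hold, every coefficient in the frame expansion of $\stackrel{B}{L_{jk}^i}-\stackrel{c}{L_{jk}^i}$ vanishes, hence $\stackrel{B}{L_{jk}^i}=\stackrel{c}{L_{jk}^i}$.

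The one point requiring care is the independence from the chosen local chart. Although $A$, $B$ and the coefficients $J,U,V,X,O,Y,E,H$ are not tensors (they transform by the rules (\ref{c}) and the analogous laws for $A,B$), the scalar equations $FA(E-Y)=A_{|0}-FA(J+Y)$ and $FB(E-Y)=B_{|0}-\tfrac{FB}{2}(J+Y)$ must still have an intrinsic meaning, because the Landsberg condition itself is tensorial. I would verify this using the transformation laws (\ref{c}) together with those for $A,B$ established earlier, showing that both sides of each identity transform by the same factor; this is the same technique already used in the proof of Proposition 3.1.

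The main obstacle, as usual in this framework, is the bookkeeping in comparing (\ref{IV.12}) and (\ref{IV.13}): one has to be vigilant that the six symmetric blocks really cancel and that the two remaining asymmetric blocks produce exactly the two stated identities, without extra factors of $F$, $\tfrac{1}{2}$ or sign errors. No new geometric input beyond the expansions already derived for $R\Gamma$ and $B\Gamma$ is needed, so once the frame expansions are available the proof is essentially a short algebraic verification.
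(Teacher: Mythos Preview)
Your proposal is correct and follows exactly the paper's approach: the paper's proof is the single line ``By (\ref{IV.12}), (\ref{IV.13}) and (\ref{c}) immediately results,'' and you have simply spelled out the term-by-term comparison and the chart-independence check that this line encodes.
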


\begin{proposition}
If $(M,F)$ is a $2$ - dimensional complex Finsler space weakly K\"{a}hler
with $B=0$ then it is Landsberg.
\end{proposition}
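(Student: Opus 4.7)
The plan is to verify directly the two characterizing identities for $2$-dimensional complex Landsberg spaces from Theorem 4.6, namely $FA(E-Y) = A_{|0} - FA(J+Y)$ and $FB(E-Y) = B_{|0} - \frac{FB}{2}(J+Y)$. Under the hypothesis $B = 0$ identically on $T^{\prime }M$, all vertical derivatives of $B$ vanish, so in particular $B_{|0}=0$ and the second identity reduces to $0=0$ automatically. The entire content therefore collapses to the first identity, which under $B=0$ simplifies to the single equation $A_{|0} = FA(J+E)$.

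To derive this, I plan to couple Proposition 4.3 (ii), which expresses $A_{|0}$ in terms of $m(U)$, with Proposition 3.1 (ii), which computes $V|_k$ explicitly, using the weakly K\"{a}hler hypothesis as the bridge. By Proposition 3.2 (ii), weakly K\"{a}hler is equivalent to $U = V$, so in particular $m(U) = m(V)$. To evaluate $m(V) = m^{k}\dot{\partial }_{k}V = m^{k}V|_{k}$, I would contract the formula $V|_{k} = -\frac{1}{2F}V l_{k} + [A(E-J) - \frac{1}{2}BV + \frac{1}{F}X] m_{k}$ of Proposition 3.1 (ii) with $m^{k}$; using the orthonormality relations $m^{k}l_{k}=0$, $m^{k}m_{k}=1$ together with $B=0$, this gives $m(V) = A(E-J) + \frac{1}{F}X$, and hence $m(U) = A(E-J) + \frac{1}{F}X$.

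Finally, Proposition 4.3 (ii) with $B=0$ reads $m(U) - A(Y-J) - \frac{1}{F}X = \frac{1}{F}A_{|0} - A(J+Y)$, i.e.\ $A_{|0} = F m(U) + 2FAJ - X$. Substituting the value of $m(U)$ obtained above yields $A_{|0} = FA(E-J) + X + 2FAJ - X = FA(J+E)$, which is precisely the remaining Landsberg identity. I do not expect a serious technical obstacle; the only step that requires attention is recognising that the weakly K\"{a}hler hypothesis $U=V$ is precisely what transfers a fact about $m(V)$ (easily read off from the vertical decomposition in Proposition 3.1) into the fact about $m(U)$ that controls $A_{|0}$ through Proposition 4.3 (ii). The degenerate sub-case $A=0$ needs no separate treatment, since both sides of the target identity $A_{|0} = FA(J+E)$ then vanish trivially.
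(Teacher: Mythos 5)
Your proof is correct and follows essentially the same route as the paper: the paper's own (very terse) argument combines the weakly K\"ahler characterization $U=V$ from Proposition 3.2 ii) with the formula for $V|_k$ in Proposition 3.1 ii) (miscited there as ``Proposition 4.1 ii)'') and the relation of Proposition 4.3 ii) to obtain $FA(E-Y)=A_{|0}-FA(J+Y)$, exactly the chain you spell out. Your explicit verification that the $B$-identity of Theorem 4.6 is trivially satisfied and that the computation reduces to $A_{|0}=FA(J+E)$ is a faithful, more detailed rendering of the same argument.
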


\begin{proof}
Because $U=V$ and $B=0,$ by Proposition 4.1 ii) and
Proposition 4.3 ii) we have $FA(E-Y)=A_{|0}-FA(J+Y),$ i.e. the space is
Landsberg.
\end{proof}

\subsection{The $h\bar{h}-$ Riemann type tensor}

Let us investigate the $h\bar{h}-$ Riemann type tensor $R_{\bar{r}j\overline{%
h}k}.$ By (\ref{1.4'}), (\ref{2.4}) and (\ref{2.5}) we can write

$R_{\bar{r}j\overline{h}k}=g_{i\bar{r}}R_{j\bar{h}k}^i$

$=-\left( l_il_{\bar{r}}+m_im_{\bar{r}}\right) \{(l_{\bar{h}}\bar{\lambda}%
+m_{\bar{h}}\bar{\mu})(L_{jk}^i)$

$+[(l_{\bar{h}}\bar{\lambda}+m_{\bar{h}}\bar{\mu}%
)(N_k^n)](Al^im_jm_n+Bm^im_jm_n)\}$

$=-\left( l_il_{\bar{r}}+m_im_{\bar{r}}\right) l_{\bar{h}}[\bar{\lambda}%
(L_{jk}^i)+F\bar{\lambda}(L_{sk}^n)l^s(Al^i+Bm^i)m_jm_n]$

$-\left( l_il_{\bar{r}}+m_im_{\bar{r}}\right) m_{\bar{h}}[\bar{\mu}%
(L_{jk}^i)+F\bar{\mu}(L_{sk}^n)l^s(Al^i+Bm^i)m_jm_n].$ It results that
\begin{eqnarray}
R_{\bar{r}j\bar{h}k} &=&-[\bar{\lambda}(l_iL_{jk}^i)+FA\bar{\lambda}%
(L_{sk}^nl^s)m_jm_n]l_{\bar{r}}l_{\bar{h}}  \label{2.12} \\
&&-[\bar{\lambda}(L_{jk}^i)m_i+FB\bar{\lambda}(L_{sk}^nl^s)m_jm_n]m_{\bar{r}%
}l_{\bar{h}}  \nonumber \\
&&-[\bar{\mu}(l_iL_{jk}^i)+FA\bar{\mu}(L_{sk}^nl^s)m_jm_n]l_{\bar{r}}m_{\bar{%
h}}  \nonumber \\
&&-[\bar{\mu}(L_{jk}^i)m_i+FB\bar{\mu}(L_{skn}^nl^s)m_jm]m_{\bar{r}}m_{\bar{h%
}}.  \nonumber
\end{eqnarray}

Further on, our goal is to find the link between the horizontal covariant
derivatives of the functions (\ref{2.11}) and theirs properties. Indeed, from
(\ref{2.12}) it follows that $R_{\bar{0}0\bar{h}0}=-LF\bar{\lambda}%
(l^jl^kl_iL_{jk}^i)l_{\bar{h}}-LF\bar{\mu}(l^jl^kl_iL_{jk}^i)m_{\bar{h}%
}=-LJ_{|\bar{0}}l_{\bar{h}}-LFJ_{|\bar{s}}m^{\bar{s}}m_{\bar{h}}$ and $R_{%
\bar{0}0\bar{0}k}=-LF\bar{\lambda}(l^jl_iL_{jk}^i).$ The property $\overline{%
R_{\bar{0}0\bar{k}0}}=R_{\bar{0}0\bar{0}k}$ leads to $F\bar{\lambda}%
(l^jl_iL_{jk}^i)=\bar{J}_{|0}l_k+F\bar{J}_{|s}m^sm_k$, which gives
\begin{equation}
\bar{J}_{|0}=J_{|\bar{0}}\;;\;\;\;\bar{J}_{|s}m^s=\frac 1FV_{|\bar{0}}+\frac
12V(\bar{J}+\bar{Y}).  \label{2.13''}
\end{equation}

Moreover, by (\ref{2.12})

$R_{\bar{r}0\bar{0}0}=-LF\bar{\lambda}(l^jl^kl_iL_{jk}^i)l_{\bar{r}}-LF\bar{%
\lambda}(l^jl^km_iL_{jk}^i)m_{\bar{r}}+\frac 12LFO(\bar{J}+\bar{Y})m_{\bar{r}%
}$

$=-LJ_{|\bar{0}}l_{\bar{r}}-LO_{|\bar{0}}m_{\bar{r}}+\frac 12LFO(\bar{J}+%
\bar{Y})m_{\bar{r}}$ and

$R_{\bar{0}r\bar{0}0}=-LF\bar{\lambda}(l^kl_iL_{rk}^i)-L^2A\bar{\lambda}%
(l^kl^sm_nL_{sk}^n)m_r+\frac 12L^2AO(\bar{J}+\bar{Y})m_r$

$=-LF\bar{\lambda}(l^kl_iL_{rk}^i)-LFAO_{|\bar{0}}m_r+\frac 12L^2AO(\bar{J}+%
\bar{Y})m_r.$

But, $\overline{R_{\bar{r}0\bar{0}0}}=R_{r\bar{0}0\bar{0}}=R_{\bar{0}r\bar{0}%
0}$ leads to

$\bar{J}_{|0}l_r+[\bar{O}_{|0}-\frac 12F\bar{O}(J+Y)]m_r=F\bar{\lambda}%
(l^kl_iL_{rk}^i)+FA[O_{|\bar{0}}-\frac 12FO(\bar{J}+\bar{Y})]m_r.$

The contraction with $m^r$ gives
\begin{equation}
\bar{O}_{|0}-\frac 12F\bar{O}(J+Y)-FAO_{|\bar{0}}+\frac 12LAO(\bar{J}+\bar{Y}%
)=U_{|\bar{0}}+\frac 12FU(\bar{J}+\bar{Y}).  \label{2.13;}
\end{equation}
Next, from (\ref{2.12}) we have

$R_{\bar{r}0\bar{h}0}m^{\bar{r}}=-L\bar{\lambda}(l^jl^kL_{jk}^i)m_il_{\bar{h}%
}-L\bar{\mu}(l^jl^kL_{jk}^i)m_im_{\bar{h}}$

$=-F[O_{|\bar{0}}l_{\bar{h}}-\frac 12FO(\bar{J}+\bar{Y})]l_{\bar{h}}-L[O_{|%
\bar{s}}m^{\bar{s}}-\frac 12O(\bar{V}+\bar{H})]m_{\bar{h}}.$

On the other hand

$R_{\bar{0}r\bar{0}h}m^r=-L\bar{\lambda}(m^rl_iL_{rh}^i)-\frac 12L(\bar{J}+%
\bar{Y})(Ul_h+Xm_h)-LFA\bar{\lambda}(l^sm_nL_{sh}^n)$

$+\frac 12LFA(\bar{J}+\bar{Y})(Ol_h+Em_h).$

Using $\overline{R_{\bar{r}0\bar{h}0}m^{\bar{r}}}=R_{r\bar{0}h\bar{0}}m^r=R_{%
\bar{0}r\bar{0}h}m^r$, we obtain

$[\bar{O}_{|0}-\frac 12F\bar{O}(J+Y)]l_h+F[\bar{O}_{|s}m^s-\frac 12\bar{O}%
(V+H)]m_h$

$=F\bar{\lambda}(m^rl_iL_{rh}^i)+LA\bar{\lambda}(l^sm_nL_{sh}^n)+\frac 12F(%
\bar{J}+\bar{Y})(Ul_h+Xm_h)$

$-\frac 12LA(\bar{J}+\bar{Y})(Ol_h+Em_h),$

which by transvection with $m^h$ gives
\begin{equation}
\bar{O}_{|s}m^s-\frac 12\bar{O}(V+H)-AE_{|\bar{0}}=\frac 1FX_{|\bar{0}}+X(%
\bar{J}+\bar{Y}).  \label{2.13;;}
\end{equation}
Taking again into account (\ref{2.12}), it follows

$R_{\bar{0}0\bar{h}k}m^k=-L\bar{\lambda}(l^jl_iL_{jk}^i)m^kl_{\bar{h}}-L\bar{%
\mu}(l^jl_iL_{jk}^i)m^km_{\bar{h}}$

$=-F[V_{|\bar{0}}+\frac 12FV(\bar{J}+\bar{Y})]l_{\bar{h}}-L[V_{|\bar{s}}m^{%
\bar{s}}+\frac 12V(\bar{V}+\bar{H})]m_{\bar{h}}$ and

$R_{\bar{0}0\bar{k}h}m^{\bar{k}}=-L\bar{\mu}(l^jl_iL_{jh}^i).$

These relations together with $\overline{R_{\bar{0}0\bar{h}k}m^km^{\bar{h}}}%
=R_{0\bar{0}h\bar{k}}m^{\bar{k}}m^h=R_{\bar{0}0\bar{k}h}m^{\bar{k}}m^h$ give
\begin{equation}
\bar{V}_{|s}m^s+\frac 12\bar{V}(V+H)=V_{|\bar{s}}m^{\bar{s}}+\frac 12V(\bar{V%
}+\bar{H}).  \label{2.13;;;}
\end{equation}
Next, (\ref{2.12}) involves

$R_{\bar{r}0\bar{h}k}m^{\bar{r}}m^k=-F\bar{\lambda}(l^jm^km_iL_{jk}^i)l_{%
\bar{h}}-F\bar{\mu}(l^jm^km_iL_{jk}^i)m_{\bar{h}}$

$=-E_{|\bar{0}}l_{\bar{h}}-FE_{|\bar{s}}m^{\bar{s}}m_{\bar{h}}$ and

$R_{\bar{0}r\bar{k}h}m^rm^{\bar{k}}=-F\bar{\mu}(l_iL_{rh}^i)m^r-LA\bar{\mu}%
(l^sL_{sh}^n)m_n.$

But, $\overline{R_{\bar{r}0\bar{h}k}m^{\bar{r}}m^k}=R_{r\bar{0}h\bar{k}%
}m^rm^{\bar{k}}=R_{\bar{0}r\bar{k}h}m^rm^{\bar{k}}$ so that

$-\bar{E}_{|0}l_h-F\bar{E}_{|s}m^sm_h=-F\bar{\mu}(l_iL_{rh}^i)m^r-LA\bar{\mu}%
(l^sL_{sh}^n)m_n.$

By transvection with $l^h$ and $m^h$ we obtain
\begin{eqnarray}
\frac 1F\bar{E}_{|0}-FAO_{|\bar{s}}m^{\bar{s}}+\frac 12FAO(\bar{V}+\bar{H})
&=&U_{|\bar{s}}m^{\bar{s}}+\frac 12U(\bar{V}+\bar{H});  \label{2.13;;;;} \\
\bar{E}_{|s}m^s-FAE_{|\bar{s}}m^{\bar{s}} &=&X_{|\bar{s}}m^{\bar{s}}+X(\bar{V%
}+\bar{H}).  \nonumber
\end{eqnarray}
Using again (\ref{2.12}), we have

$R_{\bar{r}j\bar{h}k}m^{\bar{r}}m^jm^k=-[\bar{\lambda}(L_{jk}^i)m_im^jm^k+FB%
\bar{\lambda}(l^sm^km_nL_{sk}^n)]l_{\bar{h}}$

$-[\bar{\mu}(L_{jk}^i)m_im^jm^k+FB\bar{\mu}(l^sm^km_nL_{sk}^n)]m_{\bar{h}}$

$=-(\frac 1FH_{|\bar{0}}+\frac 12H(\bar{J}+\bar{Y})+BE_{|\bar{0}})l_{\bar{h}%
}-(H_{|\bar{s}}m^{\bar{s}}+\frac 12H(\bar{V}+\bar{H})+FBE_{|\bar{s}}m^{\bar{s%
}})m_{\bar{h}}.$ \newline
On the other hand,

$R_{\bar{j}r\bar{k}h}m^{\bar{j}}m^rm^{\bar{k}}=-\bar{\mu}(m^rm_iL_{rh}^i)-FB%
\bar{\mu}(l^sL_{sh}^n)m_n.$\newline
But, $\overline{R_{\bar{r}j\bar{h}k}m^{\bar{r}}m^jm^k}=R_{r\bar{j}h\bar{k}%
}m^{\bar{j}}m^rm^{\bar{k}}=R_{\bar{j}r\bar{k}h}m^{\bar{j}}m^rm^{\bar{k}}$
which leads to

$-(\frac 1F\bar{H}_{|0}+\frac 12\bar{H}(J+Y)+\bar{B}\bar{E}_{|0})l_h-(\bar{H}%
_{|s}m^s+\frac 12\bar{H}(V+H)+F\bar{B}\bar{E}_{|s}m^s)m_h$

$=-\bar{\mu}(m^rm_iL_{rh}^i)-FB\bar{\mu}(l^sL_{sh}^n)m_n.$

The transvection with $l^h$ and $m^h$ gives
\begin{eqnarray*}
\frac 1F\bar{H}_{|0}+\frac 12\bar{H}(J+Y)+\bar{B}\bar{E}_{|0} &=&Y_{|\bar{s}%
}m^{\bar{s}}+FBO_{|\bar{s}}m^{\bar{s}}-\frac 12FBO(\bar{V}+\bar{H}); \\
\bar{H}_{|s}m^s+\frac 12\bar{H}(V+H)+F\bar{B}\bar{E}_{|s}m^s &=&H_{|\bar{s}%
}m^{\bar{s}}+\frac 12H(\bar{V}+\bar{H})+FBE_{|\bar{s}}m^{\bar{s}}.
\end{eqnarray*}
Now, $R_{\bar{r}j\bar{h}0}m^{\bar{r}}m^j=-[F\bar{\lambda}%
(m^jl^km_iL_{jk}^i)+LB\bar{\lambda}(l^sl^kL_{sk}^n)m_n]l_{\bar{h}}$

$-[F\bar{\mu}(m^jl^km_iL_{jk}^i)+LB\bar{\mu}(l^sl^kL_{sk}^n)m_n]m_{\bar{h}}$

$=-[Y_{|\bar{0}}+FBO_{|\bar{0}}-\frac 12LO(\bar{J}+\bar{Y})]l_{\bar{h}}$

$-F[Y_{|\bar{s}}m^{\bar{s}}+FBO_{|\bar{s}}m^{\bar{s}}-\frac 12FBO(\bar{V}+%
\bar{H})]m_{\bar{h}}$

and

$R_{\bar{j}r\bar{0}h}m^{\bar{j}}m^r=-F\bar{\lambda}(m^rm_iL_{rh}^i)-LB\bar{%
\lambda}(l^sL_{sh}^n)m_n.$

The conjugation $\overline{R_{\bar{r}j\bar{h}0}m^{\bar{r}}m^jl^{\bar{h}}}%
=R_{r\bar{j}h\bar{0}}m^{\bar{j}}m^rl^h=R_{\bar{j}r\bar{0}h}m^{\bar{j}}m^rl^h$
gives
\begin{equation}
\bar{Y}_{|0}+F\bar{B}\bar{O}_{|0}-\frac 12L\bar{O}(J+Y)=Y_{|\bar{0}}+FBO_{|%
\bar{0}}-\frac 12LO(\bar{J}+\bar{Y}).  \label{2.14;;}
\end{equation}

\begin{lemma}
Let $(M,F)$ be a 2 - dimensional weakly K\"{a}hler complex Finsler space.
Then

i) $\frac 1F\bar{O}_{|0}-\frac 12\bar{O}(J+Y)-AO_{|\bar{0}}+\frac 12FAO(\bar{%
J}+\bar{Y})=\bar{J}_{|s}m^s;$

ii) $\frac 1F\bar{E}_{|0}-FAO_{|\bar{s}}m^{\bar{s}}+\frac 12FAO(\bar{V}+\bar{%
H})=V_{|\bar{s}}m^{\bar{s}}+\frac 12V(\bar{V}+\bar{H}).$
\end{lemma}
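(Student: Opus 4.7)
The plan is to derive both identities directly from the curvature-symmetry relations already obtained in the lines preceding the lemma, using the weakly K\"ahler hypothesis only at the very end to collapse several terms. Recall that from the Riemann-type symmetry $\overline{R_{\bar{r}0\bar{0}0}}=R_{\bar{0}r\bar{0}0}$ contracted with $m^r$, the paper established (\ref{2.13;}):
\[
\bar{O}_{|0}-\tfrac 12 F\bar{O}(J+Y)-FAO_{|\bar{0}}+\tfrac 12 LAO(\bar{J}+\bar{Y})=U_{|\bar{0}}+\tfrac 12 FU(\bar{J}+\bar{Y}),
\]
and from $\overline{R_{\bar{0}0\bar{k}0}}=R_{\bar{0}0\bar{0}k}$ contracted with $m^k$ it established (\ref{2.13''}):
\[
\bar{J}_{|s}m^s=\tfrac 1F V_{|\bar{0}}+\tfrac 12 V(\bar{J}+\bar{Y}).
\]
Similarly, from $\overline{R_{\bar{r}0\bar{h}0}m^{\bar r}l^{\bar h}}=R_{\bar{0}r\bar{0}h}m^rl^h$ it produced the first identity in (\ref{2.13;;;;}):
\[
\tfrac 1F \bar{E}_{|0}-FAO_{|\bar{s}}m^{\bar{s}}+\tfrac 12 FAO(\bar{V}+\bar{H})=U_{|\bar{s}}m^{\bar{s}}+\tfrac 12 U(\bar{V}+\bar{H}).
\]

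For part i), I would divide (\ref{2.13;}) by $F$, obtaining
\[
\tfrac 1F\bar{O}_{|0}-\tfrac 12\bar{O}(J+Y)-AO_{|\bar{0}}+\tfrac 12 FAO(\bar{J}+\bar{Y})=\tfrac 1F U_{|\bar{0}}+\tfrac 12 U(\bar{J}+\bar{Y}).
\]
By Proposition 4.2 ii), the weakly K\"ahler hypothesis is equivalent to the identity $U=V$ on $\widetilde{T'M}$; differentiating gives $U_{|\bar 0}=V_{|\bar 0}$, so the right-hand side equals $\tfrac 1F V_{|\bar{0}}+\tfrac 12 V(\bar{J}+\bar{Y})$, which by (\ref{2.13''}) is precisely $\bar{J}_{|s}m^s$. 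This yields i).

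For part ii), I would start from the displayed consequence of (\ref{2.13;;;;}) above; using $U=V$ identically (so that also $U_{|\bar{s}}m^{\bar{s}}=V_{|\bar{s}}m^{\bar{s}}$), its right-hand side becomes $V_{|\bar{s}}m^{\bar{s}}+\tfrac 12 V(\bar{V}+\bar{H})$, which is the desired conclusion. The only conceptual step is recognizing that both identities are ``one substitution away'' from the already-derived curvature-symmetry relations, and that weakly K\"ahler provides exactly the substitution needed. There is no real obstacle here; the main care required is simply bookkeeping with the factors of $F$ and verifying that the weakly K\"ahler equivalence from Proposition 4.2 survives vertical differentiation, which is automatic because $U-V\equiv 0$ as a function on $\widetilde{T'M}$.
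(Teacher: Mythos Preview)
Your proof is correct and follows essentially the same route as the paper's own argument, which simply cites Proposition~3.2, (\ref{2.13''}), (\ref{2.13;}) and (\ref{2.13;;;;}) and leaves the substitution $U=V$ implicit. One small slip: the characterization ``weakly K\"ahler $\Leftrightarrow U=V$'' is Proposition~3.2~ii) in the paper's numbering, not Proposition~4.2~ii).
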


\begin{proof}
It results by Proposition 3.2, (\ref{2.13''}), (\ref{2.13;}) and (\ref{2.13;;;;}) . By computation using (\ref{c}), we obtain the global validity of these assertions.
\end{proof}

\begin{remark}
If $(M,F)$ is purely Hermitian ($A=0)$ and K\"{a}hler, then $\frac 1F\bar{O}%
_{|0}-\frac 12\bar{O}(J+Y)=\bar{J}_{|s}m^s$ and $V_{|\bar{s}}m^{\bar{s}%
}+\frac 12V(\bar{V}+\bar{H})=\frac 1F\bar{E}_{|0}.$
\end{remark}

In order to show the geometrical aspects of the above computations,
considering (\ref{1.8}), we define \textit{the horizontal holomorphic
sectional curvature} in direction $\lambda $ by
\begin{equation}
K_{F,\lambda }^h(z,\eta ):=2\mathbf{R(}\lambda ,\bar{\lambda},\lambda ,\bar{%
\lambda})  \label{2.15}
\end{equation}
and \textit{the horizontal holomorphic sectional curvature} in direction  $%
\mu $ by
\begin{equation}
K_{F,\mu }^h(z,\eta )=2\mathbf{R(}\mu ,\bar{\mu},\mu ,\bar{\mu}).
\label{2.15'}
\end{equation}

\begin{theorem}
Let $(M,F)$ be a 2 - dimensional complex Finsler space. Then

i) $K_{F,\lambda }^h(z,\eta )=2\mathbf{K},$ where $\mathbf{K:=-}\frac 1FJ_{|%
\bar{0}};$

ii) $K_{F,\mu }^h(z,\eta )=2\mathbf{W},$ where $\mathbf{W:=}-H_{|\bar{s}}m^{%
\bar{s}}-\frac 12H(\bar{V}+\bar{H})-BFE_{|\bar{s}}m^{\bar{s}}.$
\end{theorem}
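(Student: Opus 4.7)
The plan is to evaluate the two horizontal holomorphic sectional curvatures directly from the definition $\mathbf{R}(W,\bar Z,X,\bar Y) = \mathcal{G}(R(X,\bar Y)W,\bar Z)$ given in (\ref{1.4''}), using the explicit expression (\ref{2.12}) for $R_{\bar r j\bar h k}$ in the Berwald frame. For (i) I would contract (\ref{2.12}) with $l^j l^{\bar r} l^k l^{\bar h}$ and for (ii) with $m^j m^{\bar r} m^k m^{\bar h}$. The $\mathcal{G}$-orthonormality of $\{l,m\}$ yields the pairings $l_{\bar r}l^{\bar r}=m_{\bar r}m^{\bar r}=1$, $l_{\bar r}m^{\bar r}=m_{\bar r}l^{\bar r}=0$, and their unbarred analogues, so most summands of (\ref{2.12}) drop out at once.

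For part (i), contraction with $l^{\bar r}l^{\bar h}$ retains only the summand carrying $l_{\bar r}l_{\bar h}$; within it, the $FA\,\bar\lambda(L_{sk}^n l^s)m_j m_n$ piece is annihilated by $m_j l^j=0$, reducing the whole contraction to $-\bar\lambda(l_i L_{jk}^i)\,l^j l^k$. To turn this into a derivative of the scalar $J$, I would invoke Proposition 3.3 (formulas (\ref{2.5})), which supply $\bar\lambda(l_i)=\bar\lambda(l^i)=0$; these parallelisms let $\bar\lambda$ pass through every $l$, so the expression collapses to $\bar\lambda(l_i L_{jk}^i l^j l^k)=\bar\lambda(J)$ by the definition (\ref{2.11}) of $J$. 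A direct evaluation $\bar\lambda(J)=l^{\bar k}J_{|\bar k}=\frac{1}{F}J_{|\bar 0}$ then delivers $\mathbf{R}(\lambda,\bar\lambda,\lambda,\bar\lambda)=-\frac{1}{F}J_{|\bar 0}=\mathbf K$, whence $K_{F,\lambda}^h=2\mathbf K$.

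For part (ii), the analogous contraction with $m^{\bar r}m^{\bar h}$ isolates the last summand of (\ref{2.12}), whose coefficient is $-[\bar\mu(L_{jk}^i)m_i + FB\,\bar\mu(L_{sk}^n l^s)m_j m_n]$. Rather than re-expanding this from scratch (awkward, since by (\ref{2.5}) neither $\bar\mu(m_i)$ nor $\bar\mu(m^i)$ vanishes), I would simply reuse the partial computation performed in the text just before (\ref{2.14;;}), where $R_{\bar r j\bar h k}m^{\bar r}m^j m^k$ is shown to have $m_{\bar h}$-coefficient $-(H_{|\bar s}m^{\bar s}+\frac{1}{2}H(\bar V+\bar H)+FB\,E_{|\bar s}m^{\bar s})$. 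A final contraction with $m^{\bar h}$, using $m_{\bar h}m^{\bar h}=1$ and $l_{\bar h}m^{\bar h}=0$, selects precisely this coefficient, which is the defining expression for $\mathbf W$.

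The main bookkeeping hazard is the step in part (i) of pulling $\bar\lambda$ through the $l$'s; this move is legitimate only because of the parallelisms $\bar\lambda(l_i)=\bar\lambda(l^i)=0$. The corresponding move in direction $\mu$ fails (since $\bar\mu(m_i)=\frac{1}{2}(\bar V+\bar H)m_i\neq 0$ in general), which is exactly why the cleanest route to (ii) is to recycle the partial contraction already computed in the preceding paragraph of \S4.5 rather than expand (\ref{2.12}) directly.
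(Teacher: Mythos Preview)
Your proposal is correct and follows essentially the same route as the paper's proof: both contract (\ref{2.12}) against $l^{\bar r}l^jl^{\bar h}l^k$ (resp.\ $m^{\bar r}m^jm^{\bar h}m^k$), use the parallelisms $\bar\lambda(l_i)=\bar\lambda(l^i)=0$ from (\ref{2.5}) to identify the result with $-\bar\lambda(J)=-\frac{1}{F}J_{|\bar 0}$ in part (i), and for part (ii) read off the $m_{\bar h}$-coefficient of $R_{\bar r j\bar h k}m^{\bar r}m^jm^k$ already computed before (\ref{2.14;;}) (the paper's own proof re-derives that coefficient inline rather than citing it). The only item missing from your write-up is the paper's closing remark that $\mathbf K$ and $\mathbf W$ are chart-independent, which is immediate since $K_{F,\lambda}^h$ and $K_{F,\mu}^h$ are intrinsically defined curvature quantities.
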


\begin{proof}
By (\ref{2.12}) we obtain $\mathbf{R(}\lambda ,\bar{\lambda},\lambda ,\bar{%
\lambda})=l^{\bar{h}}l^{\bar{r}}l^jl^kR_{\bar{r}j\overline{h}k}=-\bar{\lambda%
}(l^jl^kl_iL_{jk}^i)=-\frac 1FJ_{|\bar{0}}$ and so \textit{i)} is proved. Similarly,
we have

$\mathbf{R(}\mu ,\bar{\mu},\mu ,\bar{\mu})=m^{\bar{h}}m^{\bar{r}}m^jm^kR_{%
\bar{r}j\overline{h}k}=-\bar{\mu}(L_{jk}^i)m^jm^km_i-FB\bar{\mu}%
(l^sL_{sk}^n)m^km_n$

$\mathbf{=}-\bar{\mu}(H)-\frac 12H(\bar{V}+\bar{H})-FB\bar{\mu}(E)=-H_{|\bar{%
s}}m^{\bar{s}}-\frac 12H(\bar{V}+\bar{H})-FBE_{|\bar{s}}m^{\bar{s}}\mathbf{,}
$ i.e. \textit{ii)}.

Changing the local coordinates $(z^k,\eta ^k)_{k=\overline{1,2}}$ into $%
(z^{\prime k},\eta ^{\prime k})_{k=\overline{1,2}}$, it results $\mathbf{K}%
^{\prime }=\mathbf{K}$ and $\mathbf{W}^{\prime }=\mathbf{W}$, which complete
the proof.
\end{proof}

We call the functions $\mathbf{K}$ and $\mathbf{W}$ the \textit{horizontal
curvature invariants}. Further on, our goal is to find the link between the $%
h\bar{h}-$ Riemann type tensors $R_{\bar{r}j\overline{h}k},$ $\mathbf{K}$
and $\mathbf{W.}$

Then, using (\ref{1.4'}), $\delta _i=l_i\lambda +m_i\mu $ and (\ref{2.12}), $%
R_{\bar{r}j\bar{h}k}=\mathbf{R}(\delta _j,\delta _{\bar{r}},\delta _k,\delta
_{\bar{h}})$ is decomposed into sixteen terms.

\begin{proposition}
Let $(M,F)$ be a 2 - dimensional complex Finsler space. Then
\begin{eqnarray}
R_{\bar{r}j\bar{h}k} &=&\mathbf{K}l_{\bar{r}}l_jl_{\bar{h}}l_k+\mathbf{W}m_{%
\bar{r}}m_jm_{\bar{h}}m_k  \label{2.16} \\
&&-[\frac 1F\bar{O}_{|0}-\frac 12\bar{O}(J+Y)]l_{\bar{r}}m_jl_{\bar{h}%
}l_k-[\frac 1FO_{|\bar{0}}-\frac 12O(\bar{J}+\bar{Y})]m_{\bar{r}}l_jl_{\bar{h%
}}l_k  \nonumber \\
&&-\bar{J}_{|s}m^sl_{\bar{r}}l_jl_{\bar{h}}m_k-J_{|\bar{s}}m^{\bar{s}}l_{%
\bar{r}}l_jm_{\bar{h}}l_k  \nonumber \\
&&-[V_{|\bar{s}}m^{\bar{s}}+\frac 12V(\bar{V}+\bar{H})]l_{\bar{r}}l_jm_{\bar{%
h}}m_k-\frac 1F\bar{E}_{|0}m_{\bar{r}}l_jl_{\bar{h}}m_k  \nonumber \\
&&-\frac 1FE_{|\bar{0}}l_{\bar{r}}m_jm_{\bar{h}}l_k-[\frac 1FY_{|\bar{0}%
}+BO_{|\bar{0}}-\frac 12FBO(\bar{J}+\bar{Y})]m_{\bar{r}}m_jl_{\bar{h}}l_k
\nonumber \\
&&-E_{|\bar{s}}m^{\bar{s}}m_{\bar{r}}l_jm_{\bar{h}}m_k-[\frac 1F\bar{H}%
_{|0}+\frac 12\bar{H}(J+Y)+\bar{B}\bar{E}_{|0}]m_{\bar{r}}m_jm_{\bar{h}}l_k
\nonumber \\
&&-\bar{E}_{|s}m^sl_{\bar{r}}m_jm_{\bar{h}}m_k-[\frac 1FH_{|\bar{0}}+\frac
12H(\bar{J}+\bar{Y})+BE_{|\bar{0}}]m_{\bar{r}}m_jl_{\bar{h}}m_k  \nonumber \\
&&-[O_{|\bar{s}}m^{\bar{s}}-\frac 12O(\bar{V}+\bar{H})]m_{\bar{r}}l_jm_{\bar{%
h}}l_k-[\bar{O}_{|s}m^s-\frac 12\bar{O}(V+H)]l_{\bar{r}}m_jl_{\bar{h}}m_k.
\nonumber
\end{eqnarray}
\end{proposition}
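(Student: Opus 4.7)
My plan is to start from the identity (\ref{2.12}), which already writes $R_{\bar{r}j\bar{h}k}$ as four terms indexed by the factor $l_{\bar{h}}$ or $m_{\bar{h}}$ and by $l_{\bar{r}}$ or $m_{\bar{r}}$. The remaining job is to expand the $\bar{\lambda}$- and $\bar{\mu}$-derivatives of the eight coefficient functions $J,U,V,X,O,Y,E,H$ in the local complex Berwald frame using $\delta_{i}=l_{i}\lambda +m_{i}\mu$ (Proposition 3.2) and to collect everything as a linear combination in the sixteen basis tensors $l_{\bar{r}}l_{\bar{h}}l_{j}l_{k}$, $l_{\bar{r}}l_{\bar{h}}l_{j}m_{k}$, \ldots , $m_{\bar{r}}m_{\bar{h}}m_{j}m_{k}$.

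Concretely, I would proceed in three stages. First, I substitute $L_{jk}^{i}$ from (\ref{2.10}) together with the frame action rules (\ref{2.5}) into each of the four brackets of (\ref{2.12}), writing for example
\[
\bar{\lambda}(l_{i}L_{jk}^{i})=\bar{\lambda}(J)l_{j}l_{k}+\bar{\lambda}(U)m_{j}l_{k}+\bar{\lambda}(V)l_{j}m_{k}+\bar{\lambda}(X)m_{j}m_{k},
\]
with analogous expansions for the three remaining brackets. Using Theorem 4.7, the pure $l_{\bar{r}}l_{j}l_{\bar{h}}l_{k}$ coefficient is $-\bar{\lambda}(J)=-J_{|\bar{0}}/F=\mathbf{K}$, and the pure $m_{\bar{r}}m_{j}m_{\bar{h}}m_{k}$ coefficient is $-\bar{\mu}(H)-\tfrac{1}{2}H(\bar{V}+\bar{H})-FB\bar{\mu}(E)=\mathbf{W}$. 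The ten mixed coefficients along $l\otimes l\otimes l\otimes m$-type patterns are then read off directly: e.g.\ the $l_{\bar{r}}l_{j}l_{\bar{h}}m_{k}$ slot gives $-\bar{\lambda}(V)$, which equals $-\bar{J}_{|s}m^{s}$ after invoking (\ref{2.13''}); the $m_{\bar{r}}m_{j}l_{\bar{h}}l_{k}$ slot gives $-\bar{\lambda}(Y)-FB\bar{\lambda}(O)$, which simplifies to $-\tfrac{1}{F}Y_{|\bar{0}}-BO_{|\bar{0}}+\tfrac{1}{2}FBO(\bar{J}+\bar{Y})$ after applying the conjugate of the identity (\ref{2.14;;}).

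In the second stage, I match the remaining mixed coefficients against the conjugation/Bianchi-type identities already established: (\ref{2.13;}) handles the pair $(l_{\bar{r}}m_{j}l_{\bar{h}}l_{k},\ m_{\bar{r}}l_{j}l_{\bar{h}}l_{k})$, relation (\ref{2.13;;;;}) together with Lemma 4.2(ii) handles $(l_{\bar{r}}l_{j}m_{\bar{h}}m_{k},\ m_{\bar{r}}l_{j}l_{\bar{h}}m_{k})$, relation (\ref{2.13;;}) handles $(l_{\bar{r}}m_{j}l_{\bar{h}}m_{k},\ m_{\bar{r}}l_{j}m_{\bar{h}}l_{k})$, and the two displayed identities following (\ref{2.13;;;;}) (together with $\overline{R_{\bar{r}j\bar{h}k}}=R_{r\bar{j}h\bar{k}}$) handle the remaining pairs involving three $m$'s and one $l$. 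Each raw $\bar{\lambda}$ or $\bar{\mu}$ derivative of a coefficient is thus rewritten in the normalized form appearing on the right-hand side of (\ref{2.16}).

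The third stage is verifying the statement is independent of the local chart, which follows from the observation made after (\ref{c}) that $\mathbf{K}$ and $\mathbf{W}$ are invariants and that the tensor products $l_{\bar{r}}l_{j}\ldots$, $m_{\bar{r}}m_{j}\ldots$ and their mixed versions transform in a way that exactly cancels the weights $\mathcal{T}/|\mathcal{T}|$ carried by the coefficients $U,V,X,O,Y,E,H$. The expected main obstacle is purely organizational: the sixteen slots must be populated consistently, and each mixed slot has a partner under $\overline{R_{\bar{r}j\bar{h}k}}=R_{r\bar{j}h\bar{k}}$ that forces a specific symmetric form—so the bookkeeping of choosing the canonical representative (holomorphic vs.\ antiholomorphic derivative) for each slot, and systematically invoking the correct previously-derived identity to eliminate the alternate representative, is where all the labor lies.
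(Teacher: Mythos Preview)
Your proposal is correct and follows exactly the paper's approach: the paper does not give a separate proof of this proposition but simply says that, using (\ref{1.4'}), $\delta_i=l_i\lambda+m_i\mu$ and (\ref{2.12}), the tensor $R_{\bar r j\bar h k}$ decomposes into sixteen terms---and the coefficients are precisely those already computed in the string of contractions leading to (\ref{2.13''})--(\ref{2.14;;}). One small correction: you should not invoke Lemma~4.2(ii), since that lemma assumes the space is weakly K\"ahler, a hypothesis not present here; the slots you mention are handled directly by (\ref{2.13;;;;}) (or by the raw computation from (\ref{2.12})) without any K\"ahler-type assumption.
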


\begin{remark}
If $R_{\bar{r}j\bar{h}k}=0$ then the horizontal holomorphic sectional
curvature in any direction is zero.
\end{remark}

Now we come back to the Antonelli-Shimada metric (\ref{IV.9}) in order to
study its horizontal curvature invariants $\mathbf{K}$ and $\mathbf{W}$ and
its $h\overline{h}-$ Riemann type tensor. After some direct computations we
get
\begin{eqnarray*}
J &=&Y=2\frac{\partial \sigma }{\partial z^i}\eta ^i;\;E=O=0; \\
H &=&V=-\frac{\sqrt{2}}{|\eta ||\theta |F_{AS}}\left( \frac{\partial \sigma
}{\partial z}|\theta |^2\overline{\theta }-\frac{\partial \sigma }{\partial w%
}|\eta |^2\overline{\eta }\right) ,
\end{eqnarray*}
which substituted into (\ref{2.16}) give
\[
R_{\overline{r}j\overline{h}k}=g_{j\overline{r}}(\mathbf{K}l_{\overline{h}%
}l_k+\mathbf{W}m_{\overline{h}}m_k-\bar{J}_{|s}m^sl_{\bar{h}}m_k-J_{|\bar{s}%
}m^{\bar{s}}m_{\bar{h}}l_k),
\]
where
\begin{eqnarray}
\mathbf{K} &=&\mathbf{-}\frac 2{L_{AS}}\frac{\partial ^2\sigma }{\partial
z^k\partial \overline{z}^h}\eta ^k\overline{\eta }^h;  \label{2.16_} \\
\mathbf{W} &=&\mathbf{-}\frac{\mathbf{K}}2-\frac{e^{-4\sigma }L_{AS}}{|\eta
|^2|\theta |^2}\left( \frac{\partial ^2\sigma }{\partial z\partial \overline{%
z}}|\theta |^2+\frac{\partial ^2\sigma }{\partial w\partial \overline{w}}%
|\eta |^2\right) .  \nonumber
\end{eqnarray}

\begin{proposition}
The horizontal holomorphic sectional curvature in direction $\lambda $ of
the Antonelli-Shimada metric, $K_{F_{AS},\lambda }^h(z,\eta )$ is strictly
negative (positive) if and only if the $(1,1)-$ form $\frac{\partial
^2\sigma }{\partial z^k\partial \overline{z}^h}\eta ^k\overline{\eta }^h$ is
positive (negative) definite.
\end{proposition}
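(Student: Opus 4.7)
The plan is to observe that this proposition is essentially an immediate reading of the formula for $\mathbf{K}$ in the Antonelli--Shimada setting, combined with Theorem 4.7 i). First I would invoke Theorem 4.7 i), which identifies the horizontal holomorphic sectional curvature in the direction $\lambda$ with twice the invariant $\mathbf{K}$, giving
\[
K_{F_{AS},\lambda}^h(z,\eta)=2\mathbf{K}=-\frac{4}{L_{AS}}\frac{\partial^2\sigma}{\partial z^k\partial\overline{z}^h}\eta^k\overline{\eta}^h,
\]
the last equality being the first line of (\ref{2.16_}), which was already derived for this metric.

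Next I would note that $L_{AS}=F_{AS}^2>0$ on $\widetilde{T'M}$ by the axioms defining a complex Finsler space, so the global scalar factor $-4/L_{AS}$ is strictly negative. Consequently the sign of $K_{F_{AS},\lambda}^h(z,\eta)$ at a nonzero vector $\eta$ is exactly opposite to the sign of the Hermitian form $\frac{\partial^2\sigma}{\partial z^k\partial\overline{z}^h}\eta^k\overline{\eta}^h$ evaluated at $\eta$.

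From this sign comparison the two equivalences follow directly: strict negativity of $K_{F_{AS},\lambda}^h(z,\eta)$ at every nonzero $\eta$ is equivalent to strict positivity of the $(1,1)$-form $\frac{\partial^2\sigma}{\partial z^k\partial\overline{z}^h}\eta^k\overline{\eta}^h$, i.e.\ to positive definiteness, and symmetrically strict positivity of the curvature corresponds to negative definiteness. There is no real obstacle here; the only substantive content lies behind us in the derivation of $\mathbf{K}=-(2/L_{AS})\,(\partial^2\sigma/\partial z^k\partial\overline{z}^h)\eta^k\overline{\eta}^h$ for the Antonelli--Shimada metric, which was already established in (\ref{2.16_}).
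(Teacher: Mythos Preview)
Your proposal is correct and follows exactly the same approach as the paper: invoke Theorem 4.7 i) to get $K_{F_{AS},\lambda}^h=2\mathbf{K}$, substitute the computed value of $\mathbf{K}$ from (\ref{2.16_}), and observe that since $L_{AS}>0$ the sign of the curvature is opposite to that of the Hermitian form. The paper's own proof is even terser but identical in substance.
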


\begin{proof}
Indeed, $K_{F_{AS},\lambda }^h(z,\eta )=2\mathbf{K}=\mathbf{-}\frac 4{L_{AS}}%
\frac{\partial ^2\sigma }{\partial z^k\partial \overline{z}^h}\eta ^k%
\overline{\eta }^h.$ Its sign depends on the sign of the $(1,1)-$ form $%
\frac{\partial ^2\sigma }{\partial z^k\partial \overline{z}^h}\eta ^k%
\overline{\eta }^h.$
\end{proof}

For example if $\sigma (z,w)=\log \frac 1{\left( 1-|z|^2\right) \left(
|z|^2-|w|^2\right) }$ then $\frac{\partial ^2\sigma }{\partial z^k\partial
\overline{z}^h}\eta ^k\overline{\eta }^h$ is a purely Hermitian metric on
the Hartogs triangle $D=\left\{ (z,w)\in \mathbf{C}^2,\;|w|<|z|<1\right\} .$
Therefore, $K_{F_{AS},\lambda }^h(z,\eta )<0.$

Another example, if $\sigma (z,w)=\log (1-|z|^2-|w|^2),$ (it) leads to the
Bergman metric $-\frac{\partial ^2\sigma }{\partial z^k\partial \overline{z}%
^h}\eta ^k\overline{\eta }^h$ on the unit disk $D^2:=\left\{ (z,w)\in
\mathbf{C}^2,\;|z|^2+|w|^2<1\right\} .$ It results that $K_{F_{AS},\lambda
}^h(z,\eta )>0$ and $K_{F_{AS},\mu }^h(z,\eta )<0.$

\begin{proposition}
If $\sigma (z,w)$ is a harmonic function, i.e. $\frac{\partial ^2\sigma }{%
\partial z\partial \overline{z}}=\frac{\partial ^2\sigma }{\partial
w\partial \overline{w}}=0,$ then $K_{F_{AS},\mu }^h(z,\eta )=-\frac
12K_{F_{AS},\lambda }^h(z,\eta ).$
\end{proposition}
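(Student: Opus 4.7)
The plan is to proceed by direct substitution into the formulas derived in (\ref{2.16_}) for the Antonelli--Shimada metric. First I will recall that by (\ref{2.16_}) we have the two explicit expressions
\[
\mathbf{K}=-\frac{2}{L_{AS}}\frac{\partial^{2}\sigma}{\partial z^{k}\partial\overline{z}^{h}}\eta^{k}\overline{\eta}^{h},
\qquad
\mathbf{W}=-\frac{\mathbf{K}}{2}-\frac{e^{-4\sigma}L_{AS}}{|\eta|^{2}|\theta|^{2}}\left(\frac{\partial^{2}\sigma}{\partial z\partial\overline{z}}|\theta|^{2}+\frac{\partial^{2}\sigma}{\partial w\partial\overline{w}}|\eta|^{2}\right),
\]
together with the definitions $K_{F_{AS},\lambda}^{h}(z,\eta)=2\mathbf{K}$ and $K_{F_{AS},\mu}^{h}(z,\eta)=2\mathbf{W}$ coming from Theorem~4.7.

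Next I will impose the harmonicity hypothesis $\frac{\partial^{2}\sigma}{\partial z\partial\overline{z}}=\frac{\partial^{2}\sigma}{\partial w\partial\overline{w}}=0$. The second summand in the expression for $\mathbf{W}$ then vanishes identically, and we are left with $\mathbf{W}=-\mathbf{K}/2$. Multiplying both sides by $2$ gives
\[
K_{F_{AS},\mu}^{h}(z,\eta)=2\mathbf{W}=-\mathbf{K}=-\tfrac{1}{2}(2\mathbf{K})=-\tfrac{1}{2}K_{F_{AS},\lambda}^{h}(z,\eta),
\]
which is exactly the claim.

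Since this is essentially a one-line consequence of the already-derived identities (\ref{2.16_}), there is no real obstacle to overcome; the substantive work was already carried out in establishing (\ref{2.16_}) via the decomposition (\ref{2.16}) and the computation of the horizontal coefficients $J,Y,E,O,H,V$ of the Chern--Finsler connection for the Antonelli--Shimada metric. The only point to be mindful of is that harmonicity is imposed only in the two unmixed variable blocks $(z,\bar z)$ and $(w,\bar w)$, and indeed the mixed derivatives $\partial^{2}\sigma/\partial z\partial\bar w$ and $\partial^{2}\sigma/\partial w\partial\bar z$ do persist in $\mathbf{K}$ through the full form $\frac{\partial^{2}\sigma}{\partial z^{k}\partial\bar z^{h}}\eta^{k}\bar\eta^{h}$; but the correction term in $\mathbf{W}$ involves only the two unmixed Laplacians, which is precisely why the harmonicity condition in the hypothesis is sufficient to eliminate it.
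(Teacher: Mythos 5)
Your proof is correct and follows exactly the paper's own route: the paper's proof of this proposition is simply ``It results by (\ref{2.16_})'', and you have spelled out that one-line substitution — harmonicity kills the second summand in the expression for $\mathbf{W}$, leaving $\mathbf{W}=-\mathbf{K}/2$, whence the claim after multiplying by $2$. Your closing remark that the mixed derivatives $\partial^{2}\sigma/\partial z\partial\bar w$ survive in $\mathbf{K}$ while the correction term in $\mathbf{W}$ involves only the unmixed Laplacians is a accurate and worthwhile clarification of why the stated harmonicity hypothesis suffices.
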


\begin{proof}
It results by (\ref{2.16_}).
\end{proof}

We point out that the $h\bar{h}-$ Riemann type tensors $R_{\bar{r}j\overline{%
h}k}$ generally are not symmetric. But, (\ref{2.16}) permits us to study
this particular case and some others.

\subsubsection{A weakly symmetry condition}

We call the property $R_{\bar{0}k\bar{0}0}=R_{\bar{0}0\bar{0}k}$ as being
\textit{a weakly symmetry }condition of the curvature. First, we want to see
what does this condition mean, in terms of the horizontal terms (\ref{2.11}%
). The answer is below.

\begin{corollary}
Let $(M,F)$ be a 2 - dimensional complex Finsler space. Then $R_{\bar{0}k%
\bar{0}0}=R_{\bar{0}0\bar{0}k}$ if and only if $\bar{J}_{|s}m^s=\frac 1F%
\bar{O}_{|0}-\frac 12\bar{O}(J+Y).$
\end{corollary}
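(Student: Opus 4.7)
The plan is to read off both $R_{\bar{0}k\bar{0}0}$ and $R_{\bar{0}0\bar{0}k}$ directly from the Berwald-frame decomposition (\ref{2.16}) and compare. The $0$-contractions are handled via the frame orthogonality relations $l_i\eta^i=F$, $m_i\eta^i=0$ and their conjugates, so a summand of (\ref{2.16}) of the form $c\cdot X_{\bar r}Y_jZ_{\bar h}W_k$ contributes to a given contraction only if every contracted slot carries an $l$-factor; any $m$-factor in a contracted slot kills that term.

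First, to obtain $R_{\bar{0}k\bar{0}0}$ I contract positions $1,3,4$ of (\ref{2.16}). Scanning the sixteen summands, only two survive: the $\mathbf{K}\,l_{\bar r}l_jl_{\bar h}l_k$ term, contributing $F^{3}\mathbf{K}\,l_k$, and the term $-[\frac{1}{F}\bar O_{|0}-\frac{1}{2}\bar O(J+Y)]\,l_{\bar r}m_jl_{\bar h}l_k$, contributing $-F^{2}\bar O_{|0}\,m_k+\frac{F^{3}}{2}\bar O(J+Y)\,m_k$. Next, for $R_{\bar{0}0\bar{0}k}$ I contract positions $1,2,3$; the survivors are the same $\mathbf{K}$-term (same $F^{3}\mathbf{K}\,l_k$) and the term $-\bar J_{|s}m^s\,l_{\bar r}l_jl_{\bar h}m_k$, which yields $-F^{3}\bar J_{|s}m^s\,m_k$.

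Since $\{l_k,m_k\}$ is linearly independent at each point (being the image under the nondegenerate $g_{i\bar j}$ of the orthonormal pair $\{l,m\}$), the identity $R_{\bar{0}k\bar{0}0}=R_{\bar{0}0\bar{0}k}$ forces both the $l_k$- and the $m_k$-coefficients to match. The $l_k$-coefficients agree automatically ($F^{3}\mathbf{K}$ on each side), so the condition reduces to $-F^{2}\bar O_{|0}+\frac{F^{3}}{2}\bar O(J+Y)=-F^{3}\bar J_{|s}m^s$; dividing by $-F^{3}$ yields exactly the stated identity $\bar J_{|s}m^s=\frac{1}{F}\bar O_{|0}-\frac{1}{2}\bar O(J+Y)$. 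The reduction is reversible (the sufficiency direction is just the same computation read backwards), giving the if-and-only-if. The only real obstacle is bookkeeping: correctly identifying which of the sixteen summands in (\ref{2.16}) survive each contraction and carefully tracking the powers of $F$ and the signs — no new geometric input is needed, in contrast to the weakly Kähler Lemma 4.2(i), whose extra terms $-AO_{|\bar 0}+\frac{1}{2}FAO(\bar J+\bar Y)$ come from using all Bianchi-type contractions simultaneously rather than just this one symmetry equation.
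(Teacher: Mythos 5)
Your proposal is correct and follows essentially the same route as the paper: the authors likewise read off $R_{\bar{0}k\bar{0}0}=F^3\{\mathbf{K}l_k-[\frac 1F\bar{O}_{|0}-\frac 12\bar{O}(J+Y)]m_k\}$ and $R_{\bar{0}0\bar{0}k}=F^3[\mathbf{K}l_k-\bar{J}_{|s}m^sm_k]$ from (\ref{2.16}) and compare coefficients in the basis $\{l_k,m_k\}$. The only detail the paper adds is the remark that the resulting condition is chart-independent (via the transformation rules (\ref{c})), which your argument gets for free since both contracted curvatures are genuine tensors.
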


\begin{proof}
(\ref{2.16}) gives that $R_{\bar{0}k\bar{0}0}=F^3\{\mathbf{K}l_k-[\frac 1F\bar{O}_{|0}-\frac
12\bar{O}(J+Y)]m_k\}$ and $R_{\bar{0}0\bar{0}k}=F^3[\mathbf{K}l_k-\bar{J}_{|s}m^sm_k].$
So (that), $R_{\bar{0}k\bar{0}0}=R_{\bar{0}0\bar{0}k}$ iff $\bar{J}_{|s}m^s=\frac 1F\bar{O}_{|0}-\frac
12\bar{O}(J+Y)$, which is globally by (\ref{c}).
\end{proof}

\begin{proposition}
Let $(M,F)$ be a 2 - dimensional weakly K\"{a}hler complex Finsler space
with $R_{\bar{0}k\bar{0}0}=R_{\bar{0}0\bar{0}k}$. Then it is either purely
Hermitian or with $J_{|\bar{s}}m^{\bar{s}}=0.$
\end{proposition}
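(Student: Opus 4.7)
The plan is to combine the weakly symmetry condition from the preceding Corollary with Lemma 4.2 i) and the characterization of purely Hermitian spaces from Proposition 4.1 i).

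First, I would record the data. The weakly symmetry hypothesis $R_{\bar{0}k\bar{0}0}=R_{\bar{0}0\bar{0}k}$ translates, by the Corollary just proved, into
\[
\bar{J}_{|s}m^{s}=\frac{1}{F}\bar{O}_{|0}-\frac{1}{2}\bar{O}(J+Y).
\]
Conjugating this identity yields the companion relation
\[
J_{|\bar{s}}m^{\bar{s}}=\frac{1}{F}O_{|\bar{0}}-\frac{1}{2}O(\bar{J}+\bar{Y}),
\]
which will deliver the conclusion once we know its right-hand side vanishes.

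Next, I would invoke Lemma 4.2 i), which is available because the space is weakly K\"ahler:
\[
\frac{1}{F}\bar{O}_{|0}-\frac{1}{2}\bar{O}(J+Y)-AO_{|\bar{0}}+\frac{1}{2}FAO(\bar{J}+\bar{Y})=\bar{J}_{|s}m^{s}.
\]
Subtracting the weakly symmetry relation from this identity cancels $\bar{J}_{|s}m^{s}$ and the $\bar{O}$ terms, leaving
\[
A\left[O_{|\bar{0}}-\frac{1}{2}FO(\bar{J}+\bar{Y})\right]=0.
\]

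Finally, I would split into two cases. If $A=0$, then by Proposition 4.1 i) the space is purely Hermitian, giving one alternative. Otherwise $A\neq 0$ and the bracket vanishes, i.e.\ $O_{|\bar{0}}=\frac{1}{2}FO(\bar{J}+\bar{Y})$; substituting this into the conjugated weakly symmetry relation forces $J_{|\bar{s}}m^{\bar{s}}=0$, the second alternative. The global validity of these conclusions follows from the tensorial character of the quantities $J_{|\bar{s}}m^{\bar{s}}$ and $A$ already established in \S 3. I do not expect a major obstacle here; the only subtlety is making sure Lemma 4.2 i) is being applied in the correct (conjugate) form to line up with the Corollary, after which the proof reduces to the one-line algebraic cancellation above.
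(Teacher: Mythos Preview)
Your proposal is correct and follows essentially the same route as the paper: combining Corollary 4.1 with Lemma 4.2 i) to obtain $A\bigl[O_{|\bar 0}-\tfrac{F}{2}O(\bar J+\bar Y)\bigr]=0$, and then recognizing via the conjugated weakly symmetry relation that the bracket equals $F\,J_{|\bar s}m^{\bar s}$, which is exactly the paper's one-line conclusion $AJ_{|\bar s}m^{\bar s}=0$. One small wording caveat: $A$ and $J_{|\bar s}m^{\bar s}$ are not literally tensors (cf.\ the transformation rules in \S 3), but their vanishing is indeed chart-independent, which is all that is needed.
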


\begin{proof}
Lemma 4.2 i) together with Corollary 4.1 leads to $AJ_{|\bar{s}}m^{\bar{s}}=0$, which gives $A=0$ or $J_{|\bar{s}}m^{\bar{s}}=0. $ Both conditions work globally.
\end{proof}

Further on, our goal is to find the properties of the invariants $\mathbf{K}$
and $\mathbf{W}$ with this condition of weakly symmetry. Therefore, we write
the identity i) of Proposition 2.2 for $J$%
\begin{equation}
J|_{\bar{r}|\bar{s}}-J_{|\bar{s}}|_{\bar{r}}=C_{\bar{s}\bar{r}}^{\bar{n}}J_{|%
\bar{n}}.  \label{2.16'''}
\end{equation}

Using 4.2 i) and (\ref{2.5''}) we have
\begin{equation}
J|_{\bar{r}|\bar{s}}=-\frac 1{2F}J_{|\bar{s}}l_{\bar{r}}+[\frac 1FO_{|\bar{s}%
}-\frac 1{2F}O(\bar{J}+\bar{Y})l_{\bar{s}}-\frac 1{2F}O(\bar{V}+\bar{H})m_{%
\bar{s}}]m_{\bar{r}}.  \label{2.16'}
\end{equation}

On the other hand, $J_{|\bar{s}}=-\mathbf{K}l_{\bar{s}}+J_{|\bar{h}}m^{\bar{h%
}}m_{\bar{s}}$ and using (\ref{2.4''}) we obtain
\begin{equation}
J_{|\bar{s}}|_{\bar{r}}=-\left( \mathbf{K}|_{\bar{r}}+\frac 1{2F}\mathbf{K}%
l_{\bar{r}}\mathbf{+}\frac 1FJ_{|\bar{r}}\right) l_{\bar{s}}+J_{|\bar{h}}|_{%
\bar{r}}m^{\bar{h}}m_{\bar{s}}.  \label{2.16''}
\end{equation}

Plugging (\ref{2.16'}) and (\ref{2.16''}) into (\ref{2.16'''}), it results

$-\frac 1{2F}J_{|\bar{s}}l_{\bar{r}}+\frac 1FO_{|\bar{s}}m_{\bar{r}}+\left(
\mathbf{K}|_{\bar{r}}+\frac 1{2F}\mathbf{K}l_{\bar{r}}\mathbf{+}\frac 1FJ_{|%
\bar{r}}-\frac 1{2F}O(\bar{J}+\bar{Y})m_{\bar{r}}\right) l_{\bar{s}}$

$-\left( J_{|\bar{h}}|_{\bar{r}}m^{\bar{h}}+\frac 1{2F}O(\bar{V}+\bar{H})m_{%
\bar{r}}\right) m_{\bar{s}}=(-\bar{A}\mathbf{K}+\bar{B}J_{|\bar{h}}m^{\bar{h}%
})m_{\bar{r}}m_{\bar{s}}$

which contracted by $l^{\bar{s}}$ and $m^{\bar{s}}m^{\bar{r}}$ respectively,
leads to
\begin{eqnarray}
\mathbf{K}|_{\bar{r}} &=&-\frac 1F[J_{|\bar{h}}m^{\bar{h}}+\frac 1FO_{|\bar{0%
}}-\frac 12O(\bar{J}+\bar{Y})]m_{\bar{r}};  \label{2.16..} \\
\bar{A}\mathbf{K} &=&J_{|\bar{h}}|_{\bar{r}}m^{\bar{h}}m^{\bar{r}}+\bar{B}%
J_{|\bar{h}}m^{\bar{h}}-\frac 1FO_{|\bar{s}}m^{\bar{s}}+\frac 1{2F}O(\bar{V}+%
\bar{H}).  \nonumber
\end{eqnarray}

Transvecting the Bianchi identity
\begin{equation}
\mathcal{A}_{kl}\left\{ R_{\bar{r}j\bar{h}k|l}-P_{\bar{r}j\bar{s}k}R_{\bar{0}%
l\overline{h}}^{\overline{s}}\right\} +R_{\bar{r}j\bar{h}n}T_{kl}^n=0,
\label{3.2'}
\end{equation}
(see \cite{Mub}, p. 77), by $\bar{\eta}^r\eta ^j\bar{\eta}^h\eta ^k$ it
follows
\begin{equation}
F\mathbf{K}_{|l}-\mathbf{K}_{|0}l_l-\bar{J}_{|s|0}m^sm_l+F\mathbf{K}%
l^kl_nT_{kl}^n-F\bar{J}_{|s}m^sl^km_nT_{kl}^n=0.  \label{2.16...}
\end{equation}

\begin{theorem}
Let $(M,F)$ be a connected 2 - dimensional weakly K\"{a}hler complex Finsler
space with $R_{\bar{0}k\bar{0}0}=R_{\bar{0}0\bar{0}k}$ and $|A|\neq 0$. Then
$\mathbf{K}$ is a constant on $(M,F).$
\end{theorem}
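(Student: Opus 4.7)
The plan is to exhaust the three hypotheses in a cascade: first kill the vertical dependence of $\mathbf{K}$, then use a Bianchi identity to kill its horizontal dependence, and finally descend to a function on $M$ and use connectedness. Since $|A|\neq 0$, Proposition 4.1 i) says $(M,F)$ is not purely Hermitian, so the dichotomy in Proposition 4.6 forces $J_{|\bar s}m^{\bar s}=0$. Conjugating Corollary 4.1 (which encodes the weak symmetry) gives $J_{|\bar s}m^{\bar s}=\frac{1}{F}O_{|\bar 0}-\frac{1}{2}O(\bar J+\bar Y)$, hence the bracketed expression in the first line of (2.16..) vanishes, i.e.\ $\mathbf{K}|_{\bar r}=0$. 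Since $\mathbf{K}$ is real by Theorem 4.7, the conjugate relation $\mathbf{K}|_r=0$ holds as well, so $\mathbf{K}$ depends only on $(z,\bar z)$.

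Next, I would exploit the weakly K\"ahler hypothesis ($U=V$, by Proposition 3.2) to simplify $T_{jk}^i$. A direct expansion from (2.10) yields $T_{jk}^i=(Y-E)m^i(m_jl_k-m_kl_j)$, from which $l^kl_nT_{kl}^n=0$ and $l^km_nT_{kl}^n=-(Y-E)m_l$. Feeding these, together with $\bar J_{|s}m^s=0$, into the Bianchi identity (2.16...) collapses it to
\[
F\mathbf{K}_{|l}-\mathbf{K}_{|0}\,l_l-\bar J_{|s|0}m^s\,m_l=0.
\]
To handle the last term I would differentiate the identity $\bar J_{|s}m^s=0$ along $\eta$: using the formula for $m^s_{|j}$ in (2.5'') one computes $m^s_{|0}=\tfrac{F}{2}(J+Y)m^s$, so $\bar J_{|s}m^s_{|0}=0$ and therefore $\bar J_{|s|0}m^s=0$. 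Contracting the reduced identity with $m^l$ then gives $m^l\mathbf{K}_{|l}=0$.

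For the last step, since $\mathbf{K}=\mathbf{K}(z,\bar z)$ one has $\mathbf{K}_{|l}=\partial_{z^l}\mathbf{K}$, so the relation reads $m^k\partial_{z^k}\mathbf{K}=0$ as an identity in $\eta$. Applying $\dot\partial_{\bar h}$ (i.e.\ $|_{\bar h}$) and using the formula for $m^i|_{\bar j}$ from (2.4'') produces
\[
-\frac{m_{\bar h}}{F}\bigl(l^k\partial_{z^k}\mathbf{K}\bigr)=0,
\]
because the $l_{\bar h}$ and $m_{\bar h}$ contributions proportional to $m^k\partial_{z^k}\mathbf{K}$ already vanish. Hence $l^k\partial_{z^k}\mathbf{K}=0$, equivalently $\eta^k\partial_{z^k}\mathbf{K}=0$ for every $\eta$; but $\partial_{z^k}\mathbf{K}$ is $\eta$-independent, so $\partial_{z^k}\mathbf{K}=0$ for $k=1,2$. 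Reality of $\mathbf{K}$ and connectedness of $M$ then yield that $\mathbf{K}$ is a constant.

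The main obstacle is organizing the interplay of the three hypotheses so that everything on the right of the Bianchi identity (2.16...) falls away: the weakly K\"ahler condition is needed to trivialize the torsion contractions, Proposition 4.6 (applied via $|A|\neq 0$) is needed both to annihilate $J_{|\bar s}m^{\bar s}$ and to make $\bar J_{|s|0}m^s$ disappear after one $h$-derivative, and the weak symmetry of Corollary 4.1 is essential to turn the surviving bracket in (2.16..) into $2J_{|\bar h}m^{\bar h}=0$. Once those cancellations are in place, the descent from $m^k\partial_{z^k}\mathbf{K}=0$ to $\partial\mathbf{K}=0$ is a single extra vertical differentiation.
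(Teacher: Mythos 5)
Your argument is correct and follows the same three--stage cascade as the paper's own proof: the dichotomy ``purely Hermitian or $J_{|\bar s}m^{\bar s}=0$'' (which is Proposition 4.10 in the paper, not Proposition 4.6 --- that one is the Landsberg statement) combined with $|A|\neq 0$ and the first relation of (\ref{2.16..}) gives $\mathbf{K}|_{\bar r}=0$, and the transvected Bianchi identity (\ref{2.16...}) then removes the horizontal dependence. Where you add value: you actually prove $\bar J_{|s|0}m^s=0$ (via $m^s_{|0}=\frac F2(J+Y)m^s$ and the Leibniz rule applied to $\bar J_{|s}m^s=0$), a fact the paper merely asserts, and you make explicit the reality of $\mathbf{K}$ needed to pass from $\dot{\partial}_{\bar r}\mathbf{K}=0$ to independence of $\eta$ as well as $\bar\eta$, which the paper leaves implicit. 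Your final descent is also genuinely different in mechanism: the paper invokes the commutation identity of Proposition 2.2 ii) to conclude $\mathbf{K}_{|j}|_{\bar k}=0$ and then reads $\mathbf{K}_{|0}=0$ off an explicit expansion of $\mathbf{K}_{|l}|_{\bar r}$, whereas you apply $\dot{\partial}_{\bar h}$ directly to the scalar identity $m^k\partial_{z^k}\mathbf{K}=0$ and use (\ref{2.4''}) to extract $l^k\partial_{z^k}\mathbf{K}=0$, hence $\eta^k\partial_{z^k}\mathbf{K}=0$ for all $\eta$ and so $\partial_{z^k}\mathbf{K}=0$. This is more elementary, avoids the $h\bar v$-curvature term entirely, and rests only on the already established $\eta$-independence of $\partial_{z^k}\mathbf{K}$; your torsion computation $l^kl_nT^n_{kl}=(V-U)m_l=0$ also makes precise where the weakly K\"ahler hypothesis enters, which the paper glosses over. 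Both routes are sound; apart from the mislabelled proposition, yours is the more complete write-up.
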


\begin{proof}
By Proposition 4.10 and by the first relation
of (\ref{2.16..}) it results that $\mathbf{K}|_{\bar{r}%
}=0$, i.e. $\mathbf{K}$ does not depend on $\eta $. Because $(M,F)$ is weakly K\"{a}hler, the identity (\ref{2.16...}) together with $\bar{J}_{|s|0}m^s=0$ gives $\mathbf{K}_{|l}=\frac
1F\mathbf{K}_{|0}l_l.$ But, using ii) from Proposition 2.2, we have $0=%
\mathbf{K}|_{\bar{k}|j}=\mathbf{K}_{|j}|_{\bar{k}}.$ On the other hand,

$\mathbf{K}_{|l}|_{\bar{r}}=-\frac 1{2L}\mathbf{K}_{|0}l_ll_{\bar{r}}+\frac
1F\mathbf{K}_{|0}|_{\bar{r}}l_l+\frac 1{F}\mathbf{K}_{|0}(\frac 1{2F}l_ll_{\bar{r}}+\frac 1{F}m_lm_{\bar{r}})$.

It follows that $\mathbf{K}_{|0}=0$  and so $\mathbf{K}_{|l}=0$, which is equivalent to $\frac{\partial
\mathbf{K}}{\partial z^l}=0,$ i.e. $\mathbf{K}$ is a constant on $(M,F).$
\end{proof}

\begin{corollary}
Let $(M,F)$ be a connected 2 - dimensional complex Finsler space with $R_{%
\bar{0}k\bar{0}0}=R_{\bar{0}0\bar{0}k}$. Then $\mathbf{K}$ depends on $z$
only if and only if $\bar{J}_{|s}m^s=0.$ Moreover, given any of these
equivalent conditions, we have $F\bar{A}\mathbf{K}=-O_{|\bar{s}}m^{\bar{s}%
}+\frac 12O(\bar{V}+\bar{H}).$
\end{corollary}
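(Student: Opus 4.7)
The plan is to read off the result directly from the two identities in (\ref{2.16..}) already established on the way to Theorem~4.8, combined with the weakly symmetry condition supplied by Corollary~4.1. First I would conjugate Corollary~4.1 to obtain $J_{|\bar{s}}m^{\bar{s}} = \frac{1}{F}O_{|\bar{0}} - \frac{1}{2}O(\bar{J}+\bar{Y})$, and substitute this into the bracket in the first formula of (\ref{2.16..}); the two summands inside the bracket then coincide, collapsing the identity to
\[
\mathbf{K}|_{\bar{r}} = -\frac{2}{F}\,J_{|\bar{h}}m^{\bar{h}}\,m_{\bar{r}}.
\]
Since $m_{\bar{r}}$ is a frame covector and hence does not vanish identically, $\mathbf{K}|_{\bar{r}}\equiv 0$ is equivalent to $J_{|\bar{h}}m^{\bar{h}}=0$, which is just the conjugate of $\bar{J}_{|s}m^s=0$. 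Because $\mathbf{K}=\frac{1}{2}K_{F,\lambda}^h$ is real, the companion condition $\mathbf{K}|_r\equiv 0$ follows automatically by conjugation, so $\mathbf{K}$ depends on $z$ alone precisely when $\bar{J}_{|s}m^s=0$.

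For the moreover part, assume $J_{|\bar{h}}m^{\bar{h}}=0$ and return to the second identity in (\ref{2.16..}). The middle term $\bar{B}J_{|\bar{h}}m^{\bar{h}}$ drops out immediately. To eliminate the remaining correction $J_{|\bar{h}}|_{\bar{r}}m^{\bar{h}}m^{\bar{r}}$, I would apply $|_{\bar{r}}$ to the vanishing identity $J_{|\bar{h}}m^{\bar{h}}=0$ and invoke the conjugate of the vertical derivative rule recorded in (\ref{2.4''}), namely
\[
m^{\bar{h}}|_{\bar{r}} = -\frac{1}{2F}l_{\bar{r}}m^{\bar{h}}+\frac{\bar{B}}{2}m_{\bar{r}}m^{\bar{h}}.
\]
Every summand on the right carries $m^{\bar{h}}$ as a factor, so the Leibniz correction $J_{|\bar{h}}\,m^{\bar{h}}|_{\bar{r}}$ factors through $J_{|\bar{h}}m^{\bar{h}}$ and vanishes, leaving $J_{|\bar{h}}|_{\bar{r}}m^{\bar{h}}=0$ and \emph{a fortiori} $J_{|\bar{h}}|_{\bar{r}}m^{\bar{h}}m^{\bar{r}}=0$. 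Feeding these two vanishings back into the second equation of (\ref{2.16..}) gives $F\bar{A}\mathbf{K} = -O_{|\bar{s}}m^{\bar{s}} + \frac{1}{2}O(\bar{V}+\bar{H})$, which is the claimed identity.

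I do not foresee any serious obstacle: the whole argument is essentially a conjugate-and-substitute calculation sitting on top of (\ref{2.16..}) and Corollary~4.1, with reality of $\mathbf{K}$ used to pass between $\mathbf{K}|_r$ and $\mathbf{K}|_{\bar{r}}$. The only point that deserves attention is structural --- the precise shape of $m^{\bar{h}}|_{\bar{r}}$ from (\ref{2.4''}), in which every term carries $m^{\bar{h}}$ as a factor, is exactly what makes $J_{|\bar{h}}|_{\bar{r}}m^{\bar{h}}$ vanish without any supplementary hypothesis on $\bar{B}$ or on the horizontal invariants, and this is the one place where the explicit form of the vertical derivative of the Berwald frame is genuinely needed.
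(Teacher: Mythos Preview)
Your proposal is correct and follows essentially the same route as the paper's own proof: both combine Corollary~4.1 with the two identities in (\ref{2.16..}), first collapsing the first identity to $\mathbf{K}|_{\bar{r}}=-\frac{2}{F}J_{|\bar{h}}m^{\bar{h}}m_{\bar{r}}$ to obtain the equivalence, and then reading the ``moreover'' part off the second identity once $J_{|\bar{h}}m^{\bar{h}}=0$. The paper's proof is terse and simply asserts that the second equation of (\ref{2.16..}) yields the result; you correctly supply the missing step, namely that the Leibniz rule together with the structural fact $m^{\bar{h}}|_{\bar{r}}=(-\tfrac{1}{2F}l_{\bar{r}}+\tfrac{\bar{B}}{2}m_{\bar{r}})m^{\bar{h}}$ forces $J_{|\bar{h}}|_{\bar{r}}m^{\bar{h}}m^{\bar{r}}=0$.
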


\begin{proof}
By Corollary 4.1, the first relation of (\ref{2.16..}) is $\mathbf{K}|_{\bar{r}}=-\frac 2FJ_{|\bar{h}}m^{%
\bar{h}}m_{\bar{r}}$ which justifies the above equivalence. $\bar{J}_{|s}m^s=0$ with the second equation of (%
\ref{2.16..}) give $F\bar{A}\mathbf{K}=-O_{|\bar{s}}m^{\bar{s}%
}+\frac 12O(\bar{V}+\bar{H}).$ Its global validity completes the proof.
\end{proof}

\begin{theorem}
Let $(M,F)$ be a connected 2 - dimensional complex Finsler space with $R_{%
\bar{0}k\bar{0}0}=R_{\bar{0}0\bar{0}k},$ $|A|\neq 0$ and $\mathbf{K}$ a
nonzero constant on $M.$ Then $(M,F)$ is weakly K\"{a}hler.
\end{theorem}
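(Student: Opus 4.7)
The plan: this theorem is the converse of Theorem 4.8, and I would run essentially the same Bianchi identity (\ref{2.16...}) backwards — with $\mathbf{K}$ a nonzero constant as input and $U=V$ (which by Proposition 3.2 ii) is exactly weakly K\"ahler) as output.

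First I would record the immediate consequences of $\mathbf{K}$ being a real constant on $M$: all of $\mathbf{K}|_{\bar{r}}$, $\mathbf{K}_{|l}$ and hence $\mathbf{K}_{|0}$ vanish identically. The vanishing $\mathbf{K}|_{\bar{r}}=0$ says that $\mathbf{K}$ depends on $z$ only, so under the symmetry hypothesis $R_{\bar{0}k\bar{0}0}=R_{\bar{0}0\bar{0}k}$ Corollary 4.2 applies and delivers $\bar{J}_{|s}m^s=0$ (equivalently, by conjugation, $J_{|\bar{s}}m^{\bar{s}}=0$).

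Next I would verify that $\bar{J}_{|s|0}m^s=0$ automatically. Differentiating $\bar{J}_{|s}m^s\equiv 0$ horizontally and contracting with $\eta$ gives $\bar{J}_{|s|0}m^s=-\bar{J}_{|s}m^s_{|0}$. From (\ref{2.5''}) together with $l_0=F$ and $m_0=0$ one reads off $m^s_{|0}=\tfrac{F}{2}(J+Y)m^s$, so $\bar{J}_{|s|0}m^s=-\tfrac{F}{2}(J+Y)\bar{J}_{|s}m^s=0$. The substantive step is the torsion contraction $l^{k}l_{n}T^{n}_{kl}$ in the Berwald frame. Combining (\ref{2.10}) with its $k\leftrightarrow l$ transpose, the symmetric pieces drop and $T^n_{kl}$ factors as
\begin{equation*}
T^{n}_{kl}=\bigl[(U-V)l^{n}+(Y-E)m^{n}\bigr]\bigl(m_{k}l_{l}-m_{l}l_{k}\bigr),
\end{equation*}
so the Berwald orthonormality relations $l_{n}l^{n}=m_{n}m^{n}=1$, $l_{n}m^{n}=m_{n}l^{n}=0$, $l^{k}l_{k}=1$, $l^{k}m_{k}=0$ collapse the two contractions that appear in (\ref{2.16...}) to
\begin{equation*}
l^{k}l_{n}T^{n}_{kl}=(V-U)m_{l},\qquad l^{k}m_{n}T^{n}_{kl}=(E-Y)m_{l}.
\end{equation*}

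Substituting the four vanishings $\mathbf{K}_{|l}=\mathbf{K}_{|0}=\bar{J}_{|s}m^{s}=\bar{J}_{|s|0}m^{s}=0$ into (\ref{2.16...}), the $(E-Y)m_l$-contraction is multiplied by $\bar{J}_{|s}m^s=0$ and drops out, and the whole identity collapses to $F\mathbf{K}(V-U)m_{l}=0$. Since $\mathbf{K}\neq 0$, $F>0$ and $m_{l}$ is a nonzero covector, we are forced to have $U=V$ on $T'M$, and Proposition 3.2 ii) then identifies $(M,F)$ as weakly K\"ahler. The only real obstacle is the bookkeeping of the sixteen-term decomposition of $L^{i}_{jk}-L^{i}_{kj}$ and keeping the signs straight in the Berwald-frame contractions; note that $|A|\neq 0$ does not seem to enter this short argument, but it is presumably retained to match Theorem 4.8 and to exclude the purely Hermitian branch isolated in Proposition 4.1 i).
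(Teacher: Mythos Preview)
Your argument is correct and follows the paper's route exactly: invoke Corollary 4.2 to get $\bar{J}_{|s}m^s=0$, feed the vanishings into the Bianchi identity (\ref{2.16...}), and conclude $l^kl_nT^n_{kl}=0$, which is the weakly K\"ahler condition. Your version is in fact more careful than the paper's terse proof --- you explicitly justify $\bar{J}_{|s|0}m^s=0$ via the product rule and (\ref{2.5''}), and you compute the torsion contraction $l^kl_nT^n_{kl}=(V-U)m_l$ directly, whereas the paper merely asserts that the substitution yields $L\mathbf{K}\,l^kl_nT^n_{kl}=0$. Your closing observation that $|A|\neq 0$ plays no role in the argument is also accurate; the paper does not use it in this proof either, and it appears to be a residual hypothesis from the companion Theorem 4.8.
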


\begin{proof}
Substituting $\mathbf{K}_{|l}=\bar{J}_{|s}m^s=0$ in the relation (\ref{2.16...}) it follows that $L\mathbf{K}l^kl_nT_{kl}^n=0.$
Consequently, $l^kl_nT_{kl}^n=0,$ since $\mathbf{K}\neq 0\mathbf{.}$
\end{proof}

The purely Hermitian case is characterized by

\begin{proposition}
If $(M,F)$ is a 2 - dimensional K\"{a}hler purely Hermitian space, then $R_{%
\bar{0}k\bar{0}0}=R_{\bar{0}0\bar{0}k}.$
\end{proposition}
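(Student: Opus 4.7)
The proposition is essentially a two-line consequence of material already in place. My plan is to chain together Corollary 4.1, Lemma 4.2(i), and Proposition 4.1(i); no fresh computation is needed.

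First, I would recall Corollary 4.1, which rewrites the weakly-symmetric condition $R_{\bar{0}k\bar{0}0}=R_{\bar{0}0\bar{0}k}$ as the single scalar identity
\[
\bar{J}_{|s}m^s \;=\; \tfrac{1}{F}\bar{O}_{|0}-\tfrac{1}{2}\bar{O}(J+Y).
\]
So the entire task reduces to verifying this identity under the hypothesis that $(M,F)$ is Kähler and purely Hermitian.

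Next, I would invoke Lemma 4.2(i), which holds for any weakly Kähler $2$-dimensional complex Finsler space (and Kähler implies weakly Kähler, cf.\ the discussion after Proposition 3.2):
\[
\tfrac{1}{F}\bar{O}_{|0}-\tfrac{1}{2}\bar{O}(J+Y)-AO_{|\bar{0}}+\tfrac{1}{2}FAO(\bar{J}+\bar{Y})=\bar{J}_{|s}m^s.
\]
By Proposition 4.1(i), purely Hermitian is equivalent to $A=0$. Substituting $A=0$ collapses this identity to exactly the one demanded by Corollary 4.1, finishing the proof. (This is precisely the content of Remark 4.2, so really all that is being done is pairing that remark with Corollary 4.1.)

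There is no genuine obstacle here: the delicate part was already absorbed into Lemma 4.2, whose derivation required the conjugation symmetries $\overline{R_{\bar{r}0\bar{h}0}}=R_{\bar{0}r\bar{0}h}$ of the $h\bar h$-Riemann tensor together with the weakly Kähler condition. The only point worth checking is global validity under chart changes, but the quantities $\mathbf{K}$, $\bar J_{|s}m^s$, $\bar O_{|0}$, $O(J+Y)$ already transform as established in the proof of Theorem 4.7 and the remarks on pseudo-vectorial behaviour of $m$, so the equality holds intrinsically on $\widetilde{T'M}$.
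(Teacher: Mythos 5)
Your proposal is correct and follows the paper's own route: the paper's proof is simply ``It results by Remark 4.2,'' and that remark is exactly Lemma 4.2(i) specialized to $A=0$ (via Proposition 4.1(i)), which combined with Corollary 4.1 gives the weak symmetry. You have merely unpacked the same chain of references more explicitly.
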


\begin{proof}
It results by Remark 4.2.
\end{proof}

\begin{theorem}
Let $(M,F)$ be a connected 2 - dimensional K\"{a}hler purely Hermitian
space. Then $\mathbf{K}$ is a constant on $(M,F)$ if and only if $J_{|\bar{h}%
}m^{\bar{h}}=0.$
\end{theorem}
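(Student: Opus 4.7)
The plan is to deduce the theorem from Corollary 4.2 for the forward direction, and from a Bianchi-identity calculation patterned on the proof of Theorem 4.8 for the converse. Since $(M,F)$ is K\"ahler and purely Hermitian, Proposition 4.11 guarantees the weakly symmetry condition $R_{\bar 0 k \bar 0 0} = R_{\bar 0 0 \bar 0 k}$; hence Corollary 4.2 applies, yielding in particular the identity
\begin{equation*}
\mathbf{K}|_{\bar r} = -\frac{2}{F} J_{|\bar h} m^{\bar h} m_{\bar r}.
\end{equation*}
If $\mathbf{K}$ is a constant, then $\mathbf{K}|_{\bar r} = 0$, and comparing with the right-hand side forces $J_{|\bar h} m^{\bar h} = 0$.

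For the converse, assume $J_{|\bar h} m^{\bar h} = 0$. The identity above gives $\mathbf{K}|_{\bar r} = 0$; since $\mathbf{K}$ is real, also $\mathbf{K}|_r = 0$, so $\mathbf{K}$ has no $\eta$-dependence. The next step is to insert the hypothesis into the Bianchi identity (2.16...). The K\"ahler condition $T^n_{kl} \eta^k = 0$ annihilates both torsion contractions $l^k l_n T^n_{kl}$ and $l^k m_n T^n_{kl}$, while $\bar J_{|s|0} m^s = 0$ follows by $h$-differentiating the identity $\bar J_{|s} m^s = 0$ along $\eta$ and using $m^s_{|0} = \frac{F}{2}(J+Y) m^s$, a consequence of (2.5''). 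The Bianchi identity then collapses to
\begin{equation*}
\mathbf{K}_{|l} = \frac{1}{F} \mathbf{K}_{|0} l_l.
\end{equation*}

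To close the argument I would prove $\mathbf{K}_{|0} = 0$. Because $\mathbf{K}|_i = \mathbf{K}|_{\bar i} = 0$, Proposition 2.2 ii) gives $\mathbf{K}_{|j}|_{\bar k} = \mathbf{K}|_{\bar k|j} = 0$. Differentiating the preceding display vertically by $\dot\partial_{\bar r}$, using $F|_{\bar r} = \frac{1}{2} l_{\bar r}$ together with $l_l|_{\bar r} = \frac{1}{2F} l_l l_{\bar r} + \frac{1}{F} m_l m_{\bar r}$ from (2.4''), the $l_l l_{\bar r}$ contributions cancel and one is left with
\begin{equation*}
0 = \mathbf{K}_{|l}|_{\bar r} = \frac{1}{F} \mathbf{K}_{|0}|_{\bar r} l_l + \frac{1}{F^2} \mathbf{K}_{|0} m_l m_{\bar r}.
\end{equation*}
Contracting with $m^l m^{\bar r}$ extracts $\mathbf{K}_{|0} = 0$, whence $\mathbf{K}_{|l} = 0$, i.e.\ $\partial \mathbf{K}/\partial z^l = 0$; connectedness of $M$ then yields $\mathbf{K}$ constant. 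The main obstacle is precisely this last step: the vanishing of vertical derivatives alone does not pin down $\mathbf{K}$, and one must carefully combine the Bianchi identity, the full K\"ahler hypothesis (to kill both torsion contractions), and the commutation relation of Proposition 2.2 ii) before the horizontal variation of $\mathbf{K}$ can be forced to zero.
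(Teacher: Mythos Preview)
Your proposal is correct and follows essentially the same route as the paper: the paper invokes the identity $\mathbf{K}|_{\bar r}=-\frac{2}{F}J_{|\bar h}m^{\bar h}m_{\bar r}$ (obtained from (4.32) together with the weakly symmetry condition, which holds by Proposition 4.11) for the forward direction, and for the converse simply says ``by the same arguments as in Theorem 4.8''. You have unpacked that reference explicitly---killing the torsion terms in the Bianchi identity via the K\"ahler hypothesis, deriving $\bar J_{|s|0}m^s=0$ from $\bar J_{|s}m^s=0$ and the covariant derivative formula for $m^s$, and then using Proposition 2.2 ii) together with $\mathbf{K}|_i=0$ to force $\mathbf{K}_{|0}=0$---which is exactly what the paper's proof of Theorem 4.8 does.
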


\begin{proof}
By (\ref{2.16..}) we have $\mathbf{K}|_{\bar{r}}=-\frac 2FJ_{|\bar{h}}m^{%
\bar{h}}m_{\bar{r}}.$ If $\mathbf{K}$ is a constant on $(M,F)$
then $J_{|\bar{h}}m^{\bar{h}}=0.$ Conversely, if $J_{|\bar{h}}m^{\bar{h}}=0$
then $\mathbf{K}|_{\bar{r}}=0$ and by same arguments as in Theorem 4.8 it
results that $\mathbf{K}$ is a constant on $(M,F).$
\end{proof}

We note that the above Theorems give the necessary and sufficient conditions
that a connected 2 - dimensional complex Finsler space should be of constant
horizontal holomorphic curvature in direction $\lambda $. It is interesting
for us to see what happens with the horizontal holomorphic curvature in
direction $\mu $, in this case.

\begin{proposition}
Let $(M,F)$ be a connected 2 - dimensional weakly K\"{a}hler complex Finsler
space with $R_{\bar{0}k\bar{0}0}=R_{\bar{0}0\bar{0}k}$ and $|A|\neq 0$. Then

\textbf{\ }i) $F^2A\mathbf{K}=\Phi _{|0}-F(J+Y)\Phi ;$

ii) $E_{|\bar{0}}=-\frac{F\mathbf{K}}2\left( 1+A\bar{A}L\right) ;$

iii) $V_{|\bar{s}}m^{\bar{s}}+\frac 12V(\bar{V}+\bar{H})=-\frac{\mathbf{K}}%
2\left( 1-A\bar{A}L\right) ;$

iv) $Y_{|\bar{0}}=-\frac{F\mathbf{K}}2\left( 1-A\bar{A}L\right) +F\left|
\Phi \right| ^2;$

v) $H_{|\bar{0}}+\frac F2H(\bar{J}+\bar{Y})=-\frac{ALF\mathbf{K}}2\left(
\bar{B}-F\bar{A}B\right) +\frac{\bar{A}L}2[\Phi _{|k}m^k-(V+H)\Phi ]+L\Phi
\bar{\Omega};$

vi) $E_{|\bar{s}}m^{\bar{s}}=-\frac{F\mathbf{K}}2\left( \bar{B}-F\bar{A}%
B\right) -\frac{AF}2[\bar{\Phi}_{|\bar{k}}m^{\bar{k}}-(\bar{V}+\bar{H})\bar{%
\Phi}];$

vii) $\mathbf{W}=\mathbf{K}\left( 1+A\bar{A}L\right) -F(E_{|\bar{s}}m^{\bar{s%
}})|_lm^l-\frac 32BFE_{|\bar{s}}m^{\bar{s}}-L\left| \Omega \right| ^2,$

where $\Phi :=A_{|\bar{0}}+AF(\bar{J}+\bar{Y})$ and $\Omega :=A_{|\bar{k}}m^{%
\bar{k}}+A(\bar{V}+\bar{H}).$
\end{proposition}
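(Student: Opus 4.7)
The plan is to bootstrap all seven identities from the facts already assembled under these hypotheses. By Theorem~4.8, $\mathbf{K}$ is a real constant on $M$, so $\mathbf{K}_{|k}=\mathbf{K}|_{\bar k}=0$; by Proposition~4.10, $J_{|\bar s}m^{\bar s}=0$; and by Corollary~4.2 together with its conjugate, $F\bar A\,\mathbf{K}=-O_{|\bar s}m^{\bar s}+\frac12 O(\bar V+\bar H)$ and $FA\,\mathbf{K}=-\bar O_{|s}m^s+\frac12 \bar O(V+H)$. To these I would add the weakly K\"ahler identity $U=V$ (Proposition~3.2) and Lemma~4.2. The scalar $F$ will be treated as horizontally parallel ($F_{|k}=0$, since $L_{|k}=0$), which keeps the covariant differentiations clean.

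For item~i), I would $h$-differentiate the second Corollary~4.2 relation along $\eta$. Since $F_{|0}=0$ and $\mathbf{K}_{|0}=0$, the left side collapses to $FA_{|0}\mathbf{K}$; the right side is expanded through Proposition~4.2~i), vi) and the horizontal actions of Proposition~3.1 until it is re-expressed in terms of $\Phi=A_{|\bar 0}+AF(\bar J+\bar Y)$, whereupon the resulting equality rearranges to $F^2A\mathbf{K}=\Phi_{|0}-F(J+Y)\Phi$. For~ii) and~iii), the central input is Lemma~4.2~ii); after substituting Corollary~4.2 for the $O$-cluster, it reads $\frac1F\bar E_{|0}+LA\bar A\,\mathbf{K}=V_{|\bar s}m^{\bar s}+\frac12 V(\bar V+\bar H)$. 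A second, independent determination of $\bar E_{|0}$ is obtained from a suitable contraction of the Bianchi identity~(\ref{3.2'}) with $\eta,\bar\eta$-vectors read against the curvature decomposition~(\ref{2.16}); together the two relations pin down $E_{|\bar 0}$ as in~ii), and then~iii) follows at once by back-substitution, with the combination $1+A\bar AL$ splitting into $1-A\bar AL$ via the identity $-\frac{\mathbf{K}}{2}(1+A\bar AL)+LA\bar A\mathbf{K}=-\frac{\mathbf{K}}{2}(1-A\bar AL)$.

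For~iv), I would feed~ii) and its conjugate into the identity~(\ref{2.14;;}), replace $O_{|\bar 0}$ via Proposition~4.2~vi), and simplify everything through $\Phi$, $\bar\Phi$, $\mathbf{K}$, and the Finsler length $L$; the $|\Phi|^2$ term in~iv) appears precisely as the cross product $\Phi\bar\Phi/L$. Items~v) and~vi) follow by the parallel route, now starting from Proposition~4.2~viii), ix) in the $\Omega$-direction, where $\Omega=A_{|\bar k}m^{\bar k}+A(\bar V+\bar H)$ plays in the $m$-slot the role $\Phi$ plays in the $l$-slot. Finally,~vii) is a direct substitution of~v) and~vi) into the definition $\mathbf{W}=-H_{|\bar s}m^{\bar s}-\frac12 H(\bar V+\bar H)-BFE_{|\bar s}m^{\bar s}$ from Theorem~4.7~ii), the $(1+A\bar AL)$ factor multiplying $\mathbf{K}$ being the remnant of the $\Phi\bar\Phi$ and $\Omega\bar\Omega$ cancellations.

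The main obstacle is bookkeeping rather than any single conceptual step: each derivation must route many $\bar m$- and $\bar l$-derivatives through the appropriate instance among Propositions~4.2 and~4.3, consistently apply the vertical product rules~(\ref{2.4''})--(\ref{2.4'''}) together with the horizontal actions of Proposition~3.1, and verify chart-independence via the change rules~(\ref{c}). The algebra is streamlined by the structural observation that $\Phi$ and $\Omega$ package, respectively, the $l$- and $m$-weighted pieces of $A_{|\bar k}$ shifted by the horizontal coefficients, so that every intermediate appearance of $A_{|\bar 0}$ or $A_{|\bar s}m^{\bar s}$ can be absorbed into $\Phi$ or $\Omega$ plus a known correction; this is what makes the seven claims a coherent package rather than seven unrelated calculations.
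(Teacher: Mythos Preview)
Your plan diverges from the paper's proof in a way that introduces real gaps. The paper derives all seven items from a single source: the Bianchi identity
\[
R_{\bar r j\bar h k}|_l-\Xi_{\bar r j\bar h l|k}-P_{\bar r j\bar s k}P^{\bar s}_{\bar 0 l\bar h}+S_{\bar r j\bar s l}R^{\bar s}_{\bar 0 k\bar h}+R_{\bar r j\bar h n}C^n_{kl}=0,
\]
contracted successively with $\bar\eta^r m^j\bar\eta^h\eta^k$, $\bar\eta^r\eta^j\bar m^h\eta^k$, $\bar m^r\eta^j\bar\eta^h\eta^k$, $\bar\eta^r m^j\bar\eta^h m^k m^l$, $\bar m^r\eta^j\bar\eta^h m^k m^l$, $\bar m^r\eta^j\bar m^h\eta^k m^l$, and $\bar m^r\eta^j\bar m^h m^k m^l$. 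The $P\cdot P$ term is what produces the $|\Phi|^2$ and $|\Omega|^2$ contributions in iv) and vii); the $\Xi_{|k}$ term is what produces the $\Phi_{|0}$, $\Phi_{|k}m^k$ pieces in i), v), vi). You never invoke this identity, and the substitutes you propose do not carry the same information.

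Concretely: for iv) you want to feed ii) and its conjugate into~(\ref{2.14;;}), but~(\ref{2.14;;}) involves $Y_{|\bar 0}$ and $O_{|\bar 0}$ only, not $E_{|\bar 0}$, and it is merely the statement that a certain quantity equals its own conjugate; it cannot pin down $Y_{|\bar 0}$. Your appeal to Proposition~4.2~vi) to ``replace $O_{|\bar 0}$'' is also off: that item computes $\bar m(O)$, a \emph{vertical} $\bar m$-derivative, not the \emph{horizontal} contraction $O_{|\bar 0}$. For vii) you propose substituting v) and vi) into the definition of $\mathbf{W}$, but v) delivers $H_{|\bar 0}$ (an $\bar l$-direction derivative), whereas $\mathbf{W}$ needs $H_{|\bar s}m^{\bar s}$ (the $\bar m$-direction). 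No amount of bookkeeping bridges that; the paper gets vii) from a fresh contraction of the Bianchi identity above, where the curvature terms $R_{\bar r l\bar h k}\bar m^r\bar m^h m^k m^l=\mathbf{W}$ and the $P\cdot P$ term $L|\Omega|^2$ appear directly. The unifying mechanism you are missing is precisely this single Bianchi identity: once you contract it against the seven patterns listed, each claim drops out after short use of (\ref{2.16}), (\ref{2.8''})--(\ref{2.8'''}), Lemma~4.2 and Corollary~4.2.
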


\begin{proof}
Let us consider the Bianchi identity, (see \cite{Mub}, p.
77),
\begin{equation}
R_{\bar{r}j\overline{h}k}|_l-\Xi _{\bar{r}j\overline{h}l|k}-P_{\bar{r}j\bar{s%
}k}P_{\bar{0}l\overline{h}}^{\overline{s}}+S_{\bar{r}j\bar{s}l}R_{\bar{0}k%
\overline{h}}^{\bar{s}}+R_{\bar{r}j\overline{h}n}C_{kl}^n=0.  \label{3.2}
\end{equation}

In order to prove the statements i)-vii), we use Theorem 4.8, the covariant
derivatives (\ref{2.4''}), (\ref{2.5''}) and the expressions of the $v\bar{v}%
-,$ $h\bar{v}-,$ $v\bar{h}-,$ $h\bar{h}-$ Riemann type tensors.

Contracting into (\ref{3.2}) by $\bar{\eta}^rm^j\bar{\eta}^h\eta ^k$, using

$R_{\bar{r}j\overline{h}k}|_l\bar{\eta}^rm^j\bar{\eta}^h\eta ^k=-R_{\bar{0}j%
\overline{0}l}m^j=F^2[\bar{O}_{|s}m^s-\frac 12\bar{O}(V+H)]m_l=-F^3A\mathbf{K%
}m_l;$

$P_{\bar{r}j\bar{s}k}\bar{\eta}^r=S_{\bar{r}j\bar{s}l}\bar{\eta}%
^r=C_{kl}^n\eta ^k=0$ and

$\Xi _{\bar{r}j\overline{h}l|k}\bar{\eta}^rm^j\bar{\eta}^h\eta ^k=-F[\Phi
_{|0}-F(J+Y)\Phi ]m_l,$ we obtain i).

The contraction with $\bar{\eta}^r\eta ^j\bar{m}^h\eta ^k$ of (\ref{3.2}),

$R_{\bar{r}j\overline{h}k}|_l\bar{\eta}^r\eta ^j\bar{m}^h\eta ^k=-R_{\bar{0}l%
\bar{h}0}\bar{m}^h-R_{\bar{0}0\bar{h}l}\bar{m}^h+\frac 1LR_{\bar{0}0\bar{0}%
0}m_l$

$=F^2[\frac 1F\bar{E}_{|0}+V_{|\bar{s}}m^{\bar{s}}+\frac 12V(\bar{V}+\bar{H}%
)+\mathbf{K}]m_l$ and $\Xi _{\bar{r}j\overline{h}k}\eta ^j=0$ lead to
\[
\frac 1F\bar{E}_{|0}+V_{|\bar{s}}m^{\bar{s}}+\frac 12V(\bar{V}+\bar{H})=-%
\mathbf{K}.
\]
On the other hand, by Lemma 4.2 ii),
\[
\frac 1F\bar{E}_{|0}-V_{|\bar{s}}m^{\bar{s}}-\frac 12V(\bar{V}+\bar{H})=-LA%
\bar{A}\mathbf{K}.
\]
The last two relations give ii) and iii).

Now, contracting again (\ref{3.2}) by $\bar{m}^r\eta ^j\bar{\eta}^h\eta ^k,$
we have

$R_{\bar{r}j\overline{h}k}|_l\bar{m}^r\eta ^j\bar{\eta}^h\eta ^k=F^2(\mathbf{%
K}+\frac 1FY_{|\bar{0}}+\frac 1FE_{|\bar{0}})m_l$ and

$P_{\bar{r}j\bar{s}k}P_{\bar{0}l\overline{h}}^{\overline{s}}\bar{m}^r\eta ^j%
\bar{\eta}^h\eta ^k=F^2\left| \Phi \right| ^2m_l.$

It results $[\mathbf{K}+\frac 1FY_{|\bar{0}}+\frac 1FE_{|\bar{0}}-\left|
\Phi \right| ^2]m_l=0$. Hereby, $Y_{|\bar{0}}=-\mathbf{K}F-E_{|\bar{0}%
}+F\left| \Phi \right| ^2$, which together with ii) implies iv).

Next we prove v) and vi). First we contract (\ref{3.2}) with $\bar{\eta}^rm^j%
\bar{\eta}^hm^km^l$ and we obtain

$(R_{\bar{0}j\bar{0}k}m^jm^k)|_lm^l-BR_{\bar{0}j\bar{0}k}m^jm^k-\Xi _{\bar{0}%
j\overline{0}l|k}m^jm^km^l+R_{\bar{0}j\bar{0}n}C_{kl}^nm^jm^km^l=0.$ This
implies that
\begin{equation}
A|_lm^lL\mathbf{K}=-[\Phi _{|k}m^k+(V+H)\Phi ]  \label{9}
\end{equation}

The contraction of (\ref{3.2}) by $\bar{m}^r\eta ^j\bar{\eta}^hm^km^l$
implies

$(R_{\bar{r}0\bar{0}k}\bar{m}^rm^k)|_lm^l-(R_{\bar{r}l\bar{0}k}\bar{m}%
^rm^k+P_{\bar{r}0\bar{s}k}P_{\bar{0}l\overline{0}}^{\overline{s}}\bar{m}%
^rm^k-R_{\bar{r}0\overline{0}n}C_{kl}^n\bar{m}^rm^k)m^l=0,$ which gives

$H_{|\bar{0}}+\frac F2H(\bar{J}+\bar{Y})=F\bar{E}_{|0}|_lm^l+L\Phi \bar{%
\Omega}.$ Now, this together with ii), (\ref{9}) and (\ref{2.5'}) gives v).

The contraction of (\ref{3.2}) by $\bar{m}^r\eta ^j\bar{m}^h\eta ^km^l$ gives

$(R_{\bar{r}0\bar{h}0}\bar{m}^r\bar{m}^h)|_lm^l+BR_{\bar{r}0\bar{h}0}\bar{m}%
^r\bar{m}^h-R_{\bar{r}l\bar{h}0}\bar{m}^r\bar{m}^hm^l-R_{\bar{r}0\bar{h}l}%
\bar{m}^r\bar{m}^hm^l$

$-P_{\bar{r}0\bar{s}0}P_{\bar{0}l\overline{h}}^{\overline{s}}\bar{m}^r\bar{m}%
^hm^l=0,$ which is equivalent to

$L\mathbf{K}\bar{A}|_lm^l+\frac 1F\bar{H}_{|0}+\frac 12\bar{H}(J+Y)+\bar{B}%
\bar{E}_{|0}+E_{|\bar{s}}m^{\bar{s}}+B\bar{A}L\mathbf{K}=F\bar{\Phi}\Omega .$

Using ii), v) and (\ref{2.5'}) it leads to vi).

For vii) we contract (\ref{3.2}) with $\bar{m}^r\eta ^j\bar{m}^hm^km^l$ and
we deduce

$(R_{\bar{r}0\bar{h}k}\bar{m}^r\bar{m}^hm^k)|_lm^l+\frac B2R_{\bar{r}0\bar{h}%
k}\bar{m}^r\bar{m}^hm^k$

$+\frac 1LR_{\bar{0}0\bar{h}k}\bar{m}^hm^k-R_{\bar{r}l\bar{h}k}\bar{m}^r\bar{%
m}^hm^km^l+\frac 1LR_{\bar{r}0\bar{0}k}\bar{m}^rm^k$

$-P_{\bar{r}0\bar{s}k}P_{\bar{0}l\overline{h}}^{\overline{s}}\bar{m}^r\bar{m}%
^hm^km^l+R_{\bar{r}0\overline{h}n}C_{kl}^n\bar{m}^r\bar{m}^hm^km^l=0.$

From here we obtain

$-F(E_{|\bar{s}}m^{\bar{s}})|_lm^l-\frac B2E_{|\bar{s}}m^{\bar{s}}-V_{|\bar{s%
}}m^{\bar{s}}-\frac 12V(\bar{V}+\bar{H})-\mathbf{W}-\frac 1F\bar{E}_{|0}+A%
\bar{A}L\mathbf{K}-BFE_{|\bar{s}}m^{\bar{s}}=L\left| \Omega \right| ^2,$
which leads to vii).
The global validity of the above statements results by straightforward computations using (\ref{c}).
\end{proof}

Next, we establish some consequences of the above Proposition. From vii) and
Theorem 4.7 ii) it immediately results

\begin{corollary}
Let $(M,F)$ be a connected 2 - dimensional weakly K\"{a}hler complex Finsler
space with $R_{\bar{0}k\bar{0}0}=R_{\bar{0}0\bar{0}k}$ and $|A|\neq 0$. Then
\[
K_{F,\mu }^h(z,\eta )=2\mathbf{K}\left( 1+A\bar{A}L\right) -2F(E_{|\bar{s}%
}m^{\bar{s}})|_lm^l-3BFE_{|\bar{s}}m^{\bar{s}}-2L\left| \Omega \right| ^2.
\]
\end{corollary}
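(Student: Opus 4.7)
The plan is essentially to combine two results already established in the paper. First, by Theorem 4.7 ii), the horizontal holomorphic sectional curvature in the direction $\mu$ is given by
\[
K_{F,\mu}^h(z,\eta) = 2\mathbf{W},
\]
so it suffices to compute $\mathbf{W}$ under the hypotheses of the corollary (weakly K\"ahler, $R_{\bar{0}k\bar{0}0}=R_{\bar{0}0\bar{0}k}$, and $|A|\neq 0$).

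Next, I would invoke Proposition 4.11 vii), whose hypotheses are exactly those of the present corollary. That part of Proposition 4.11 states
\[
\mathbf{W} = \mathbf{K}\left(1+A\bar{A}L\right) - F(E_{|\bar{s}}m^{\bar{s}})|_l m^l - \frac{3}{2} B F E_{|\bar{s}}m^{\bar{s}} - L\left|\Omega\right|^2.
\]
Substituting this into $K_{F,\mu}^h(z,\eta) = 2\mathbf{W}$ and distributing the factor $2$ through each term yields exactly the claimed formula.

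Thus there is no new computation to carry out here: the corollary is a direct packaging of Theorem 4.7 ii) with Proposition 4.11 vii). The only conceptual point worth recording is that the hypotheses of the corollary were built precisely so that Proposition 4.11 applies, and therefore the identity for $K_{F,\mu}^h$ is a geometric invariant statement (independent of local chart) once one notes that $\mathbf{K}$, $\mathbf{W}$, $L$, $A\bar{A}$, $|\Omega|^2$, and $BFE_{|\bar{s}}m^{\bar{s}}$ together with the expression $F(E_{|\bar{s}}m^{\bar{s}})|_l m^l$ all enjoy the required invariance, as was verified throughout Section 4. There is no real obstacle; the work has been done in Proposition 4.11.
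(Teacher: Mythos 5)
Your proposal is correct and is exactly the paper's own argument: the corollary is stated as an immediate consequence of Theorem 4.7 ii) ($K_{F,\mu }^h=2\mathbf{W}$) combined with part vii) of the preceding proposition giving $\mathbf{W}$, then doubling. The only slip is a label: the formula for $\mathbf{W}$ you quote is Proposition 4.12 vii) in the paper's numbering, not 4.11.
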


So, we remark that the conditions which assure that $K_{F,\lambda }^h(z,\eta
)$ is a constant on $M$ do not suffice to imply that $K_{F,\mu }^h(z,\eta )$
is a constant, too.

Proposition 4.12 i) gives

\begin{corollary}
Let $(M,F)$ be a connected 2 - dimensional weakly K\"{a}hler complex Finsler
space with $R_{\bar{0}k\bar{0}0}=R_{\bar{0}0\bar{0}k}$ and $|A|\neq 0$. Then
$\mathbf{K}=0$ if and only if $\Phi _{|0}=F(J+Y)\Phi .$
\end{corollary}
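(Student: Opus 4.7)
The plan is to derive this corollary as an immediate consequence of Proposition 4.12 i), which under the standing hypotheses (connected, $2$-dimensional, weakly K\"ahler, $R_{\bar{0}k\bar{0}0}=R_{\bar{0}0\bar{0}k}$, $|A|\neq 0$) gives the pointwise identity
\[
F^{2}A\,\mathbf{K}\;=\;\Phi_{|0}-F(J+Y)\Phi.
\]
Since the hypotheses of the corollary are exactly those of Proposition 4.12, no additional preparation is needed and no new Bianchi identity has to be invoked.

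For the forward direction, I would assume $\mathbf{K}=0$ and read off directly from the displayed identity that $\Phi_{|0}-F(J+Y)\Phi=0$, i.e.\ $\Phi_{|0}=F(J+Y)\Phi$. Conversely, assuming $\Phi_{|0}=F(J+Y)\Phi$, the right-hand side of the identity vanishes, so $F^{2}A\,\mathbf{K}=0$. Here I would use that $F>0$ on $\widetilde{T'M}$ (by axiom ii) of a complex Finsler space, away from the zero section) and that $|A|\neq 0$ by assumption; dividing through yields $\mathbf{K}=0$.

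There is essentially no obstacle: the work has already been done in establishing Proposition 4.12 i), and the global character of $\mathbf{K}$ and of the scalar condition $\Phi_{|0}=F(J+Y)\Phi$ is inherited from the tensorial/invariant behaviour of the quantities involved, as verified in the previous results. The only minor point to check is that $\Phi:=A_{|\bar{0}}+AF(\bar{J}+\bar{Y})$, although built from non-tensorial pieces, is such that the combination $\Phi_{|0}-F(J+Y)\Phi$ transforms in the same way as $F^{2}A\,\mathbf{K}$ under a change of chart; but this compatibility is forced by Proposition 4.12 i) itself, so no independent verification is required.
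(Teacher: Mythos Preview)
Your proof is correct and matches the paper's own approach exactly: the corollary is stated immediately after the remark ``Proposition 4.12 i) gives'' and is meant to be read off from the identity $F^{2}A\,\mathbf{K}=\Phi_{|0}-F(J+Y)\Phi$ together with $F>0$ and $|A|\neq 0$.
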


\begin{corollary}
Let $(M,F)$ be a connected 2 - dimensional weakly K\"{a}hler complex Finsler
space with $R_{\bar{0}k\bar{0}0}=R_{\bar{0}0\bar{0}k}$ and $|A|\neq 0$. If $%
\Phi _{|k}m^k=(V+H)\Phi $ then
\begin{equation}
\mathbf{W}=\mathbf{K}\left( 1+A\bar{A}L\right) -\frac{L\mathbf{K}}2(\mathbf{I%
}+\frac 12F\bar{A}B^2+F\bar{A}B|_km^k)-L\left| \Omega \right| ^2.  \label{22}
\end{equation}
\end{corollary}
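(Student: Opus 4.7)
The plan is to start from the expression for $\mathbf{W}$ already derived in Proposition 4.12 vii),
\[
\mathbf{W}=\mathbf{K}(1+A\bar{A}L)-F(E_{|\bar{s}}m^{\bar{s}})|_{l}m^{l}-\tfrac{3}{2}BFE_{|\bar{s}}m^{\bar{s}}-L|\Omega|^{2},
\]
and to reduce the middle two terms to the desired combination of $\mathbf{I}$, $F\bar{A}B^{2}$ and $F\bar{A}B_{|k}m^{k}$ using the standing hypothesis $\Phi_{|k}m^{k}=(V+H)\Phi$.

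First I would feed the hypothesis (and its conjugate $\bar{\Phi}_{|\bar{k}}m^{\bar{k}}=(\bar{V}+\bar{H})\bar{\Phi}$) into Proposition 4.12 vi) so that the bracket there collapses and one obtains the clean identity
\[
E_{|\bar{s}}m^{\bar{s}}=-\tfrac{F\mathbf{K}}{2}\bigl(\bar{B}-F\bar{A}B\bigr).
\]
Then I would invoke Theorem 4.8 (whose hypotheses are in force) to conclude that $\mathbf{K}$ is a constant on $M$; in particular $\mathbf{K}$ is annihilated by every horizontal and every vertical derivative, which is what makes the differentiation below tractable.

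Next I would compute $(E_{|\bar{s}}m^{\bar{s}})|_{l}m^{l}=m(E_{|\bar{s}}m^{\bar{s}})$ (a vertical derivative on a scalar), applying the product rule to the formula above. The ingredients are: $m(F)=F|_{l}m^{l}=\tfrac{1}{2}l_{l}m^{l}=0$ from (\ref{2.4''}); the conjugate of (\ref{2.5'}) together with the definition of $\mathbf{I}$, giving
\[
\bar{B}_{|s}m^{s}=-\mathbf{I}-\tfrac{B\bar{B}}{2},\qquad \bar{A}_{|s}m^{s}=-\bar{A}B+\tfrac{\bar{B}}{F};
\]
and $B_{|l}m^{l}$ left symbolic, since it is exactly the quantity that appears in the target formula. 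Substituting these into $m(\bar{B}-F\bar{A}B)$ and multiplying by $-\tfrac{F\mathbf{K}}{2}$ yields
\[
F(E_{|\bar{s}}m^{\bar{s}})|_{l}m^{l}=\tfrac{L\mathbf{K}}{2}\!\left[\mathbf{I}+\tfrac{3B\bar{B}}{2}-F\bar{A}B^{2}+F\bar{A}B_{|l}m^{l}\right].
\]
Independently, $-\tfrac{3}{2}BFE_{|\bar{s}}m^{\bar{s}}=\tfrac{3L\mathbf{K}}{4}(B\bar{B}-F\bar{A}B^{2})$ is immediate.

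Finally I would add the two contributions and simplify: the $B\bar{B}$ terms combine as $-\tfrac{3}{4}L\mathbf{K}B\bar{B}+\tfrac{3}{4}L\mathbf{K}B\bar{B}=0$, while the $F\bar{A}B^{2}$ terms give $\tfrac{1}{2}L\mathbf{K}F\bar{A}B^{2}-\tfrac{3}{4}L\mathbf{K}F\bar{A}B^{2}=-\tfrac{1}{4}L\mathbf{K}F\bar{A}B^{2}$, so that
\[
-F(E_{|\bar{s}}m^{\bar{s}})|_{l}m^{l}-\tfrac{3}{2}BFE_{|\bar{s}}m^{\bar{s}}=-\tfrac{L\mathbf{K}}{2}\!\left[\mathbf{I}+\tfrac{1}{2}F\bar{A}B^{2}+F\bar{A}B_{|l}m^{l}\right],
\]
which plugged into vii) produces (\ref{22}). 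The only real obstacle is the bookkeeping in this last cancellation; once the $\mathbf{K}$-constancy from Theorem 4.8 removes the contributions of $m(\mathbf{K})$ and $m(F)$, the rest is a matter of keeping the fractions and signs straight. Globality is automatic, since every quantity in (\ref{22}) is one of the invariants whose tensorial character was already established.
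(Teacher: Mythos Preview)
Your proof is correct and follows exactly the approach sketched in the paper: feed the hypothesis into Proposition 4.12 vi) to get the closed form for $E_{|\bar{s}}m^{\bar{s}}$, then substitute into vii) using the identities from (\ref{2.5'}) and their conjugates. You have simply made explicit the ``substituted into vii) gives (\ref{22})'' step that the paper leaves to the reader, including the appeal to Theorem 4.8 for the constancy of $\mathbf{K}$ (so that $m(\mathbf{K})=0$). One notational slip: in your displayed intermediate formulas you write $B_{|l}m^{l}$ where you mean the vertical derivative $B|_{l}m^{l}=m(B)$, matching the $B|_{k}m^{k}$ in the target (\ref{22}); the computation itself is carried out correctly as a vertical derivative.
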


\begin{proof}
By Proposition 4.12 vi) we have $E_{|\bar{s}}m^{\bar{s}}=-%
\frac{F\mathbf{K}}2\left( \bar{B}-F\bar{A}B\right) $, which substituted into
vii) gives (\ref{22}), taking into account (\ref{2.5'}). Their global validity complete the proof.
\end{proof}

\begin{remark}
Under assumptions of above Corollary and $B=0,$ contracting (\ref{3.2}) with
$\bar{m}^rm^j\bar{m}^hm^km^l$ we obtain $\mathbf{K}=0.$ On the other hand,
by Proposition 4.1, $\mathbf{I}=0.$ Therefore, (\ref{22}) becomes $\mathbf{W}%
=-L\left| \Omega \right| ^2$. Further more $\mathbf{W}=0$ if and only if $%
\Omega =0.$
\end{remark}

\begin{theorem}
Let $(M,F)$ be a connected 2 - dimensional weakly K\"{a}hler complex Finsler
space with $R_{\bar{0}k\bar{0}0}=R_{\bar{0}0\bar{0}k}$, $|A|\neq 0$ and
\[
\Phi _{|k}=\Phi [(J+Y)l_k+(V+H)m_k].
\]
Then
\[
K_{F,\lambda }^h(z,\eta )=0\mbox{ and }K_{F,\mu }^h(z,\eta )=-2L\left|
\Omega \right| ^2\leq 0.
\]

Moreover, $K_{F,\mu }^h(z,\eta )$ is a constant if and only if $\Omega
=\frac cF,$ where $c\in \mathbf{C}.$
\end{theorem}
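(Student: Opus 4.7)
The plan is to exploit the directional decomposition of the hypothesis $\Phi_{|k}=\Phi[(J+Y)l_k+(V+H)m_k]$ along the orthonormal Berwald frame, feed the two resulting scalar identities into Proposition~4.12, and so extract both horizontal holomorphic curvatures. First I would contract this identity with $l^k$, using the orthonormality relations $l^k l_k=1$ and $l^k m_k=0$ (which follow from $\mathcal{G}(l,\bar l)=1$ and $\mathcal{G}(l,\bar m)=0$), to obtain $\Phi_{|0}=F(J+Y)\Phi$. Plugging this into Proposition~4.12~i), namely $F^2A\mathbf{K}=\Phi_{|0}-F(J+Y)\Phi$, the right-hand side collapses to zero; since $|A|\neq 0$ and $F>0$, this forces $\mathbf{K}=0$, and Theorem~4.7~i) then gives $K^h_{F,\lambda}(z,\eta)=2\mathbf{K}=0$.

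Next, contracting the same hypothesis with $m^k$ (using $m^k m_k=1$ and $m^k l_k=0$) yields $\Phi_{|k}m^k=(V+H)\Phi$, which is exactly the premise of Corollary~4.5. That corollary then gives
\[
\mathbf{W}=\mathbf{K}(1+A\bar A L)-\tfrac{L\mathbf{K}}{2}\bigl(\mathbf{I}+\tfrac{1}{2}F\bar A B^2+F\bar A B_{|k}m^k\bigr)-L|\Omega|^2.
\]
Every term proportional to $\mathbf{K}$ vanishes by the previous step, so $\mathbf{W}=-L|\Omega|^2$, and Theorem~4.7~ii) delivers $K^h_{F,\mu}(z,\eta)=2\mathbf{W}=-2L|\Omega|^2\leq 0$.

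For the ``moreover'' clause, one direction is immediate: if $\Omega=c/F$ with $c\in\mathbb{C}$, then $L|\Omega|^2=F^2\cdot|c|^2/F^2=|c|^2$, so $K^h_{F,\mu}=-2|c|^2$ is a real constant. Conversely, if $K^h_{F,\mu}$ is constant on $T'M$, then $L|\Omega|^2=k_0$ for some constant $k_0\geq 0$, equivalently $|F\Omega|$ is constant. To upgrade this modulus constraint to the stronger statement $F\Omega=c\in\mathbb{C}$, I would differentiate $L|\Omega|^2=k_0$ vertically and horizontally: using $F_{|k}=\tfrac{1}{2}l_k$ from (2.4'') together with expressions for the covariant derivatives of $\Omega$ that follow from its structural form $\Omega=A_{|\bar k}m^{\bar k}+A(\bar V+\bar H)$ and the hypotheses in force, the goal is to arrive at $(F\Omega)|_k=(F\Omega)_{|k}=0$, and then at the conjugate statements, so that $F\Omega$ reduces to a genuine complex constant.

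The main obstacle is this last upgrade: passing from constancy of $|F\Omega|$ on $T'M$ to constancy of the complex scalar $F\Omega$ itself. The phase of $F\Omega$ is an a priori unknown real function on $T'M$, and killing it will require mobilising all the standing hypotheses --- weakly K\"ahler, the symmetry $R_{\bar 0 k\bar 0 0}=R_{\bar 0 0\bar 0 k}$, $|A|\neq 0$, and the full directional form of $\Phi_{|k}$ --- through the Bianchi identity (3.2), so as to bound the horizontal and vertical derivatives of $\Omega$ by multiples of $\Omega$ itself and thus conclude.
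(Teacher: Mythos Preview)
Your argument for the two curvature values is correct and is exactly the paper's route: the paper invokes Corollary~4.4 (which is just Proposition~4.12~i) rephrased) and Proposition~4.12~vi),~vii), while you invoke Proposition~4.12~i) directly and Corollary~4.5 (which is vi) fed into vii)). These are the same chain of implications read at slightly different levels.

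Your caution about the ``moreover'' clause is well placed, and in fact the paper's one-line proof does not address it at all. From $K^h_{F,\mu}=-2L|\Omega|^2$ constant one only gets $|F\Omega|$ constant, and nothing in Proposition~4.12~vi),~vii) or Corollary~4.4 controls the argument of $F\Omega$. Note also that $\Omega$ is built from $A$ and $m^{\bar k}$, both of which pick up unimodular factors $\mathcal{T}/|\mathcal{T}|$ under a change of chart, so the phase of $F\Omega$ is not even chart-invariant; only $|\Omega|^2$ has global meaning. The honest content of the equivalence is therefore $|\Omega|=|c|/F$ for a real constant $|c|\ge 0$, which is exactly what you obtain immediately. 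Your proposed strategy of forcing $(F\Omega)_{|k}=(F\Omega)|_k=0$ cannot succeed globally for this reason, and you should not spend effort trying to close a gap that is really an imprecision in the statement rather than a missing argument.
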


\begin{proof}
It results by Corollary 4.4 and Proposition 4.12 vi) and
vii).
\end{proof}

\begin{theorem}
If $(M,F)$ is a connected 2 - dimensional complex Berwald space with $R_{%
\bar{0}k\bar{0}0}=R_{\bar{0}0\bar{0}k}$ and $|A|\neq 0$, then $R_{\bar{r}j%
\bar{h}k}=0.$
\end{theorem}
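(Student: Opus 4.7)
The plan is to reduce the full tensor $R_{\bar r j\bar h k}$ to the decomposition in Proposition 4.11 and show that, under the Berwald hypothesis together with the weak symmetry and $|A|\neq 0$, each of the sixteen coefficients there vanishes. The engine driving the argument will be Proposition 4.12 applied with the auxiliary quantities $\Phi$ and $\Omega$ computed to be identically zero.

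First I would exploit that for a $2$--dimensional complex Berwald space the identities (IV.2) hold, i.e.\ $A_{|\bar k}=-A(\bar J+\bar Y)l_{\bar k}-A(\bar V+\bar H)m_{\bar k}$. Contracting by $\bar\eta^k$ and by $m^{\bar k}$ (and using $l_{\bar k}l^{\bar k}=F$, $l_{\bar k}m^{\bar k}=0$, $m_{\bar k}m^{\bar k}=1$) yields
\[
A_{|\bar 0}=-FA(\bar J+\bar Y),\qquad A_{|\bar k}m^{\bar k}=-A(\bar V+\bar H),
\]
whence $\Phi:=A_{|\bar 0}+AF(\bar J+\bar Y)=0$ and $\Omega:=A_{|\bar k}m^{\bar k}+A(\bar V+\bar H)=0$. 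Since every Berwald space is (weakly) K\"ahler (Theorem 4.3), Proposition 4.12 i) applies and gives $F^{2}A\mathbf{K}=\Phi_{|0}-F(J+Y)\Phi=0$; as $|A|\neq 0$ and $F\neq 0$, we conclude $\mathbf{K}=0$. Weak K\"ahlerity with $|A|\neq 0$ and the weak symmetry also triggers Proposition 4.10, so $J_{|\bar s}m^{\bar s}=0$, and then Corollary 4.2 yields both $\bar J_{|s}m^{s}=0$ and $O_{|\bar s}m^{\bar s}-\tfrac 12 O(\bar V+\bar H)=F\bar A\mathbf{K}=0$. Lemma 4.2 i) now collapses to $A[\tfrac 1FO_{|\bar 0}-\tfrac 12 O(\bar J+\bar Y)]=0$, hence $\tfrac 1FO_{|\bar 0}-\tfrac 12 O(\bar J+\bar Y)=0$.

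Next I would feed $\mathbf{K}=0$, $\Phi=0$, $\Omega=0$ into the remaining items ii)--vii) of Proposition 4.12. Items ii), iii) and iv) give immediately $E_{|\bar 0}=0$, $V_{|\bar s}m^{\bar s}+\tfrac 12V(\bar V+\bar H)=0$ and $Y_{|\bar 0}=0$. Item vi) gives $E_{|\bar s}m^{\bar s}=0$, item v) gives $H_{|\bar 0}+\tfrac F2H(\bar J+\bar Y)=0$, and item vii) gives $\mathbf{W}=0$. All conjugate identities follow by complex conjugation.

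Finally, I substitute into the sixteen-term expression for $R_{\bar r j\bar h k}$ in Proposition 4.11. Each coefficient is one of the quantities shown to vanish (noting e.g.\ that the coefficient of $m_{\bar r}m_jl_{\bar h}l_k$ equals $\tfrac 1FY_{|\bar 0}+FB[\tfrac 1F O_{|\bar 0}-\tfrac 12 O(\bar J+\bar Y)]=0$, and similarly the $H_{|\bar 0}$ coefficients absorb the vanishing $E_{|\bar 0}$ terms). Consequently $R_{\bar r j\bar h k}=0$. I expect the main obstacle to be the bookkeeping: making sure every one of the sixteen coefficients in (2.16) is genuinely accounted for by the identities extracted from Proposition 4.12 (especially the mixed coefficients containing $O_{|\bar 0}$ and $BE_{|\bar 0}$), rather than any conceptual difficulty, since the Berwald hypothesis rigidly forces $\Phi=\Omega=0$ and drags all curvature pieces down with $\mathbf{K}$.
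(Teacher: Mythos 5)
Your proof is correct and follows essentially the same route as the paper: the Berwald hypothesis forces $\Phi=\Omega=0$, and then Proposition 4.12 together with the decomposition (2.16) (plus the auxiliary facts from Proposition 4.10, Corollaries 4.1--4.2 and Lemma 4.2 needed for the $O$- and $J$-coefficients) kills every term. The paper states this in two lines; you have simply supplied the bookkeeping, modulo the harmless slip that the contraction identity should read $l_{\bar k}\bar{\eta}^k=F$ (i.e.\ $l_{\bar k}l^{\bar k}=1$).
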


\begin{proof}
The assumption of Berwald leads to $\Phi =\Omega =0.$
Applying now Proposition 4.12 and (\ref{2.16}) we obtain $R_{\bar{r}j\bar{h}%
k}=0.$
\end{proof}

\begin{remark}
If $(M,F)$ is a connected 2 - dimensional complex Berwald space with $R_{%
\bar{0}k\bar{0}0}=R_{\bar{0}0\bar{0}k}$ and $|A|\neq 0$, then the horizontal
holomorphic sectional curvature in any direction is zero.
\end{remark}

Moreover, we obtain a known result from Hermitian geometry

\begin{theorem}
If $(M,F)$ is a 2 - dimensional K\"{a}hler purely Hermitian space with $%
\mathbf{K}$ a constant on $M$, then $R_{\bar{r}j\bar{h}k}=\frac{\mathbf{K}}%
2(g_{j\bar{r}}g_{k\bar{h}}+g_{k\bar{r}}g_{j\bar{h}})$ and the horizontal
holomorphic sectional curvatures in directions $\lambda $ and $\mu $ are $2%
\mathbf{K}$.
\end{theorem}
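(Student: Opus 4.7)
The plan is to exploit the simplifications forced by the purely Hermitian K\"ahler hypothesis and then collapse the sixteen-term expansion (2.16) of $R_{\bar{r}j\bar{h}k}$. By Proposition 4.1(i), purely Hermitian gives $A=0$, which via (2.5') forces $B=0$, $\mathbf{I}=0$ and $C_{jk}^i=0$ (hence $\Xi=P=S=0$ and the auxiliary scalars $\Phi$, $\Omega$ of Proposition 4.12 vanish identically). By Proposition 3.2(i), K\"ahler gives $U=V$ and $Y=E$. Proposition 4.9 then automatically supplies the weak symmetry $R_{\bar{0}k\bar{0}0}=R_{\bar{0}0\bar{0}k}$, and Theorem 4.9, combined with the hypothesis that $\mathbf{K}$ is constant, yields $J_{|\bar{h}}m^{\bar{h}}=0$; its conjugate $\bar{J}_{|s}m^s=0$ together with Corollary 4.1 produces $\frac{1}{F}\bar{O}_{|0}-\frac{1}{2}\bar{O}(J+Y)=0$, while Remark 4.2 also supplies $V_{|\bar{s}}m^{\bar{s}}+\frac{1}{2}V(\bar{V}+\bar{H})=\frac{1}{F}\bar{E}_{|0}$.

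Next, I would repeat the Bianchi-identity contractions of (3.2) that underlie Proposition 4.12(ii)--(vii), but now with $A=B=0$ (so $\Phi=\Omega=0$ and all the $\Xi$-, $P$-, $S$-type terms in (3.2) drop out). The surviving equations deliver
\[
E_{|\bar{0}}=Y_{|\bar{0}}=-\frac{F\mathbf{K}}{2},\quad V_{|\bar{s}}m^{\bar{s}}+\frac{1}{2}V(\bar{V}+\bar{H})=-\frac{\mathbf{K}}{2},
\]
\[
E_{|\bar{s}}m^{\bar{s}}=\bar{E}_{|s}m^s=0,\quad H_{|\bar{0}}+\frac{F}{2}H(\bar{J}+\bar{Y})=0,\quad \mathbf{W}=\mathbf{K},
\]
the first two being glued together by the K\"ahler identity $Y=E$. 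Substituting these evaluations into (2.16) collapses the sum to
\[
R_{\bar{r}j\bar{h}k}=\mathbf{K}\bigl(l_{\bar{r}}l_jl_{\bar{h}}l_k+m_{\bar{r}}m_jm_{\bar{h}}m_k\bigr)+\frac{\mathbf{K}}{2}\bigl(l_{\bar{r}}l_jm_{\bar{h}}m_k+m_{\bar{r}}m_jl_{\bar{h}}l_k+l_{\bar{r}}m_jm_{\bar{h}}l_k+m_{\bar{r}}l_jl_{\bar{h}}m_k\bigr),
\]
which by the frame identity $g_{i\bar{j}}=l_il_{\bar{j}}+m_im_{\bar{j}}$ of (2.3) is precisely $\frac{\mathbf{K}}{2}(g_{j\bar{r}}g_{k\bar{h}}+g_{k\bar{r}}g_{j\bar{h}})$. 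The two curvature identifications then follow from Theorem 4.7: $K_{F,\lambda}^h=2\mathbf{K}$ is immediate from part (i), and $K_{F,\mu}^h=2\mathbf{W}=2\mathbf{K}$ follows from part (ii) combined with $\mathbf{W}=\mathbf{K}$.

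The main obstacle is the bookkeeping in the Bianchi step: Proposition 4.12 is stated under $|A|\neq 0$, so one must check that its six scalar conclusions remain valid (rather than becoming vacuous) when $A=0$, and in particular that the off-diagonal coefficients in (2.16) involving $O_{|\bar{s}}m^{\bar{s}}-\frac{1}{2}O(\bar{V}+\bar{H})$, $\bar{H}_{|0}+\frac{1}{2}\bar{H}(J+Y)$ and their companions all genuinely vanish. Once that verification is in hand, the final identification with $g_{j\bar{r}}g_{k\bar{h}}+g_{k\bar{r}}g_{j\bar{h}}$ is a routine expansion of the frame decomposition.
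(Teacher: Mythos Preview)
Your proposal is correct and follows essentially the same route as the paper: reduce via $A=B=0$, exploit the Bianchi identity (3.2), contract to pin down each scalar coefficient in (2.16), and then recognize the resulting sum as $\frac{\mathbf{K}}{2}(g_{j\bar r}g_{k\bar h}+g_{k\bar r}g_{j\bar h})$ via the frame decomposition $g_{i\bar j}=l_il_{\bar j}+m_im_{\bar j}$.

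One small difference worth noting: the paper does not route through Proposition 4.12 at all. Instead it observes directly that with $A=B=0$ one has $C_{kl}^n=0$, $\Xi=P=S=0$, so (3.2) collapses to the single equation $R_{\bar r j\bar h k}|_l=0$, and every needed scalar identity (including the one you flag as the ``main obstacle'', namely $O_{|\bar s}m^{\bar s}-\tfrac12 O(\bar V+\bar H)=0$) is then read off by contracting this equation with suitable frame vectors. This sidesteps your concern about whether Proposition 4.12's conclusions survive at $A=0$: you never need that proposition, only the raw Bianchi identity in its simplified form. Your extra invocations of Proposition 4.9, Theorem 4.9, Corollary 4.1 and Remark 4.2 are valid and do supply several of the vanishing coefficients up front, but they are not strictly necessary---the paper gets everything from $R_{\bar r j\bar h k}|_l=0$ alone.
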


\begin{proof}
Because $A=B=0,$ the Bianchi identity (\ref{3.2}) is $R_{%
\bar{r}j\overline{h}k}|_l=0.$ Some contractions of it lead to

$-O_{|\bar{s}}m^{\bar{s}}+\frac 12O(\bar{V}+\bar{H})=H_{|\bar{0}}+\frac F2H(%
\bar{J}+\bar{Y})=E_{|\bar{s}}m^{\bar{s}}=0$,

$\frac 1FE_{|\bar{0}}=V_{|\bar{s}}m^{\bar{s}}+\frac 12V(\bar{V}+\bar{H}%
)=\frac 1FY_{|\bar{0}}=-\frac{\mathbf{K}}2$ and $\mathbf{W}=\mathbf{K.}$

Therefore, (\ref{2.16}) become

$R_{\bar{r}j\bar{h}k}=\frac{\mathbf{K}}2\mathbf{(}2l_{\bar{r}}l_jl_{\bar{h}%
}l_k+2m_{\bar{r}}m_jm_{\bar{h}}m_k+l_{\bar{r}}l_jm_{\bar{h}}m_k+m_{\bar{r}%
}l_jl_{\bar{h}}m_k+l_{\bar{r}}m_jm_{\bar{h}}l_k$

$+m_{\bar{r}}m_jl_{\bar{h}}l_k)=\frac{\mathbf{K}}2(g_{j\bar{r}}g_{k\bar{h}%
}+g_{k\bar{r}}g_{j\bar{h}})$ and $K_{F,\lambda }^h(z,\eta )=K_{F,\mu
}^h(z,\eta )=2\mathbf{K}.$
\end{proof}

\subsubsection{A particular case}

Consider the problem of classifying the 2 - dimensional complex Finsler
spaces for which
\begin{equation}
R_{\bar{r}j\bar{h}k}=\mathcal{K}(g_{j\bar{r}}g_{k\bar{h}}+g_{k\bar{r}}g_{j%
\bar{h}}),  \label{3.1}
\end{equation}
where $\mathcal{K}=\mathcal{K}(z,\eta ):T^{\prime }M\rightarrow \mathbf{R}$.
We note that the 2 - dimensional complex Finsler spaces with (\ref{3.1}) has
the property that $K_{F,\lambda }^h(z,\eta )=K_{F,\mu }^h(z,\eta )=4\mathcal{%
K}$. To solve the stated problem we use the Bianchi identities (\ref{3.2})
and (\ref{3.2'}). Indeed, contracting the identity (\ref{3.2}) by $\bar{\eta}%
^r\eta ^j$ and taking into account

$R_{\bar{r}j\overline{h}k}|_l\bar{\eta}^r\eta ^j=R_{\bar{0}0\overline{h}%
k}|_l-R_{\bar{0}l\overline{h}k}$

$=\left[ L\mathcal{K}(2g_{k\bar{h}}-m_{\bar{h}}m_k)\right] |_l-F\mathcal{K}%
(2l_ll_{\bar{h}}l_k+l_lm_{\bar{h}}m_k+m_lm_{\bar{h}}l_k)$

$=F\mathcal{K}(2g_{k\bar{h}}-m_{\bar{h}}m_k)l_l+L\mathcal{K}|_l(2g_{k\bar{h}%
}-m_{\bar{h}}m_k)$

$+F\mathcal{K}l_km_{\bar{h}}m_l-F\mathcal{K}(2l_ll_{\bar{h}}l_k+l_lm_{\bar{h}%
}m_k+m_lm_{\bar{h}}l_k)$

$=L\mathcal{K}|_l(2g_{k\bar{h}}-m_{\bar{h}}m_k);$

$\Xi _{\bar{r}j\overline{h}l|k}\eta ^j=P_{\bar{r}j\bar{s}k}P_{\bar{0}l%
\overline{h}}^{\overline{s}}\bar{\eta}^r=S_{\bar{r}j\bar{s}l}R_{\bar{0}k%
\overline{h}}^{\bar{s}}\bar{\eta}^r=0$ and

$R_{\bar{r}j\overline{h}n}C_{kl}^n\bar{\eta}^r\eta ^j=L\mathcal{K}(2g_{n\bar{%
h}}-m_{\bar{h}}m_n)(Al^n+Bm^n)m_km_l$

$=L\mathcal{K}(2Al_{\bar{h}}+Bm_{\bar{h}})m_km_l,$ we obtain
\begin{equation}
\mathcal{K}|_l(2g_{k\bar{h}}-m_{\bar{h}}m_k)+\mathcal{K}(2Al_{\bar{h}}+Bm_{%
\bar{h}})m_km_l=0,  \label{3.3}
\end{equation}
which is true in any local coordinates. Moreover, contraction by $\bar{\eta}%
^h\eta ^k$ in (\ref{3.3}) it follows $\mathcal{K}|_l=0,$ which means that $%
\mathcal{K}=\mathcal{K}(z)$ is a function of $z$ alone. This implies that $%
\mathcal{K}(2Al_{\bar{h}}+Bm_{\bar{h}})=0$ and hence that either $A=0$
(i.e. the purely Hermitian case) for any $\mathcal{K}=\mathcal{K}(z)$, or $%
|A|\neq 0$ and $\mathcal{K}=0$ for any $|B|.$

So, we have proven

\begin{corollary}
Let $(M,F)$ be a connected 2 - dimensional complex Finsler space with $R_{%
\bar{r}j\bar{h}k}=\mathcal{K}(g_{j\bar{r}}g_{k\bar{h}}+g_{k\bar{r}}g_{j\bar{h%
}}).$ Then it is purely Hermitian with $\mathcal{K}$ a function of $z$ alone,
or it is not purely Hermitian but with $\mathcal{K}=0$.
\end{corollary}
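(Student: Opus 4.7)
The plan is to substitute the Ansatz $R_{\bar r j\bar h k}=\mathcal{K}(g_{j\bar r}g_{k\bar h}+g_{k\bar r}g_{j\bar h})$ into the Bianchi identity (\ref{3.2}) and extract algebraic constraints on $\mathcal{K}$, $A$ and $B$ by two successive contractions with $\eta$ and $\bar\eta$.

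First I would contract (\ref{3.2}) with $\bar\eta^r\eta^j$. Because $\Xi_{\bar r j\bar h l}$ and $S_{\bar r j\bar s l}$ carry a lower vertical index that dies against $\eta^j$ (via $C_{0k}^i=0$ in Proposition 2.1.iii), and $P_{\bar r 0\bar h k}$ vanishes when transvected by $\bar\eta^r$ (via Proposition 2.1.ii and $m_{\bar r}\bar\eta^r=0$), the only surviving pieces are $R_{\bar r j\bar h k}|_l\bar\eta^r\eta^j$ and $R_{\bar r j\bar h n}C_{kl}^n\bar\eta^r\eta^j$. Using the decomposition (\ref{2.3}) together with $g_{j\bar r}\bar\eta^r=Fl_j$ and $\eta^jl_j=F,\eta^jm_j=0$, I get $R_{\bar 0 0\bar h k}=L\mathcal{K}(2g_{k\bar h}-m_{\bar h}m_k)$; differentiating horizontally via (\ref{2.5''}) and using $g_{i\bar j|l}=0$, $l_{i|l}=l_{|l}^i=0$, the residual non-derivative contributions combine with $R_{\bar 0 l\bar h k}$ and cancel, leaving the clean expression $L\mathcal{K}|_l(2g_{k\bar h}-m_{\bar h}m_k)$. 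The $C$-contraction is then read off from (\ref{2.4}), producing $L\mathcal{K}(2Al_{\bar h}+Bm_{\bar h})m_km_l$. Putting the pieces together yields the identity (\ref{3.3}).

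Next I would contract (\ref{3.3}) once more with $\bar\eta^h\eta^k$: the last term kills itself because $m_k\eta^k=0$, while $(2g_{k\bar h}-m_{\bar h}m_k)\bar\eta^h\eta^k=2F l_k\eta^k-0=2L$, forcing $\mathcal{K}|_l=0$. Hence $\mathcal{K}$ is independent of $\eta$, i.e.\ $\mathcal{K}=\mathcal{K}(z)$. Feeding this back into (\ref{3.3}) reduces the constraint to $\mathcal{K}(2Al_{\bar h}+Bm_{\bar h})m_km_l=0$; since $m_k$ is not identically zero and since $\{l_{\bar h},m_{\bar h}\}$ is a basis of the vertical space, this splits into $\mathcal{K}A=\mathcal{K}B=0$. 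Therefore either $\mathcal{K}=0$, or else $A=0$, in which case $B=0$ by (\ref{2.5'}) and Proposition 4.1.i gives that $(M,F)$ is purely Hermitian with $\mathcal{K}=\mathcal{K}(z)$ arbitrary.

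The main obstacle will be the bookkeeping in the first contraction: one must verify that taking the $h$-covariant derivative of $L\mathcal{K}(2g_{k\bar h}-m_{\bar h}m_k)$ produces exactly the factor $L\mathcal{K}|_l(2g_{k\bar h}-m_{\bar h}m_k)$, with all connection-induced terms arising from $m_{i|l}$ and $L_{|l}=F l_l$ cancelling against $R_{\bar 0 l\bar h k}=F\mathcal{K}(2l_ll_{\bar h}l_k+l_lm_{\bar h}m_k+m_lm_{\bar h}l_k)$. Global validity is automatic: $\mathcal{K}$ is recovered tensorially from $R_{\bar r j\bar h k}$ and $g_{i\bar j}$, while $A=0$ and $B=0$ are chart-independent conditions by the discussion following (\ref{2.4}).
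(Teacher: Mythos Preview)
Your proposal is correct and follows the paper's proof essentially line by line: contract the Bianchi identity (\ref{3.2}) with $\bar\eta^r\eta^j$ to obtain (\ref{3.3}), then contract again with $\bar\eta^h\eta^k$ to force $\mathcal{K}|_l=0$, and read off the dichotomy from $\mathcal{K}(2Al_{\bar h}+Bm_{\bar h})=0$. One notational slip to fix in the write-up: the derivative $|_l$ on $R$ in (\ref{3.2}) is the \emph{vertical} covariant derivative, so the relevant formulas are (\ref{2.4''}) and $L|_l=Fl_l$, not (\ref{2.5''}); in particular $l_i|_l$ and $m_i|_l$ are not zero, but the combination $(m_{\bar h}m_k)|_l=-\tfrac1F m_{\bar h}l_km_l$ together with the $L|_l$ term is exactly what cancels against $R_{\bar 0 l\bar h k}$, which is the bookkeeping you correctly flagged as the delicate step.
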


For $\mathcal{K}\neq 0,$ (\ref{3.2'}) gives

$\mathcal{K}_{|l}(g_{j\bar{r}}g_{k\bar{h}}+g_{k\bar{r}}g_{j\bar{h}})-%
\mathcal{K}_{|k}(g_{j\bar{r}}g_{l\bar{h}}+g_{l\bar{r}}g_{j\bar{h}})+\mathcal{%
K}(g_{j\bar{r}}g_{n\bar{h}}+g_{n\bar{r}}g_{j\bar{h}})T_{kl}^n=0,$ which
contracted by $\bar{\eta}^r\eta ^j\bar{\eta}^h\eta ^k$ leads to
\begin{equation}
F\mathcal{K}_{|l}-\mathcal{K}_{|0}l_l+F\mathcal{K}l^kl_nT_{kl}^n=0.
\label{3.3'}
\end{equation}

\begin{theorem}
Let $(M,F)$ be a connected 2 - dimensional complex Finsler space with $R_{%
\bar{r}j\bar{h}k}=\mathcal{K}(g_{j\bar{r}}g_{k\bar{h}}+g_{k\bar{r}}g_{j\bar{h%
}}),$ $\mathcal{K}\neq 0.$ Then $\mathcal{K}$ is a constant on $M$ if and
only if $(M,F)$ is K\"{a}hler.
\end{theorem}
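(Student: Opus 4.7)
The plan is to prove both directions by returning to the uncontracted Bianchi identity
\[
\mathcal{K}_{|l}(g_{j\bar{r}}g_{k\bar{h}}+g_{k\bar{r}}g_{j\bar{h}})-\mathcal{K}_{|k}(g_{j\bar{r}}g_{l\bar{h}}+g_{l\bar{r}}g_{j\bar{h}})+\mathcal{K}(g_{j\bar{r}}g_{n\bar{h}}+g_{n\bar{r}}g_{j\bar{h}})T_{kl}^n=0
\]
derived immediately before (\ref{3.3'}), together with Corollary 4.6, which under the standing hypothesis $\mathcal{K}\neq 0$ already forces the space to be purely Hermitian and $\mathcal{K}$ to depend on $z$ alone.

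For the direction ``$\mathcal{K}$ constant $\Rightarrow$ K\"{a}hler,'' I would set $\mathcal{K}_{|l}=\mathcal{K}_{|k}=0$ in the identity above and divide by $\mathcal{K}\neq 0$ to obtain $(g_{j\bar{r}}g_{n\bar{h}}+g_{n\bar{r}}g_{j\bar{h}})T_{kl}^n=0$. Contracting with $g^{\bar{r}j}$ gives $3\,g_{n\bar{h}}T_{kl}^n=0$ because $g^{\bar{r}j}g_{j\bar{r}}=\dim_{\mathbb{C}}M=2$ and $g^{\bar{r}j}g_{n\bar{r}}=\delta^{j}_{n}$; nondegeneracy of $g$ then yields $T_{kl}^n=0$. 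So the space is strongly K\"{a}hler, hence K\"{a}hler by the Chen--Shen result recalled in \S 2.

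For the converse, I would assume K\"{a}hler ($T_{kl}^n=0$) and substitute into the contracted Bianchi identity (\ref{3.3'}), which collapses to $F\mathcal{K}_{|l}=\mathcal{K}_{|0}l_l$. Corollary 4.6 gives $\mathcal{K}=\mathcal{K}(z)$, so the horizontal covariant derivatives reduce to partial derivatives, $\mathcal{K}_{|l}=\partial\mathcal{K}/\partial z^l$ and $\mathcal{K}_{|0}=(\partial\mathcal{K}/\partial z^k)\eta^k$; substituting $l_l=g_{l\bar{j}}\bar{\eta}^j/F$ and clearing $F$, the equation becomes the polynomial identity
\[
L\,\frac{\partial\mathcal{K}}{\partial z^l}=g_{l\bar{j}}\,\bar{\eta}^j\eta^k\,\frac{\partial\mathcal{K}}{\partial z^k}
\]
in $\eta$ and $\bar{\eta}$. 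Since the space is purely Hermitian, $g_{i\bar{j}}$ is independent of $\eta$, so matching coefficients of $\eta^i\bar{\eta}^j$ gives $g_{i\bar{j}}\,\partial_l\mathcal{K}=g_{l\bar{j}}\,\partial_i\mathcal{K}$; contracting with $g^{\bar{j}l}$ and using $n=2$ yields $\partial_i\mathcal{K}=2\,\partial_i\mathcal{K}$, hence $\partial_i\mathcal{K}=0$, and connectedness of $M$ makes $\mathcal{K}$ a true constant. The step I expect to demand most care is the legitimacy of coefficient-matching in $\eta,\bar{\eta}$, where it is essential that Corollary 4.6 has already forced the purely Hermitian reduction; without it, the $\eta$-dependence of $g_{i\bar{j}}$ would spoil the polynomial comparison.
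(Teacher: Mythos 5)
Your proof is correct, and it reaches both implications by a genuinely different mechanism than the paper, even though both arguments start from the same Bianchi identity (\ref{3.2'}) and its contraction (\ref{3.3'}). In the forward direction the paper contracts all the way down to $l^kl_nT_{kl}^n=0$ (the weakly K\"{a}hler condition) and then asserts that this forces $T_{kl}^n=0$ --- an assertion that is justified only because Corollary 4.6 has already placed the space in the purely Hermitian class, where all the K\"{a}hler nuances coincide; your partial contraction of the uncontracted identity with $g^{\bar{r}j}$, giving $3g_{n\bar{h}}T_{kl}^n=0$ and hence $T_{kl}^n=0$ by nondegeneracy, gets the full (strong) K\"{a}hler conclusion in one step and does not lean on that coincidence, which is a small but real improvement in transparency. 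In the converse direction the paper works covariantly: from $\mathcal{K}_{|l}=\frac 1F\mathcal{K}_{|0}l_l$ it invokes the commutation identity ii) of Proposition 2.2 to get $\mathcal{K}_{|j}|_{\bar{k}}=0$, then differentiates $\frac 1F\mathcal{K}_{|0}l_l$ vertically and reads off $\mathcal{K}_{|0}=0$. You instead exploit the purely Hermitian reduction to turn $L\,\partial_l\mathcal{K}=g_{l\bar{j}}\bar{\eta}^j\eta^k\partial_k\mathcal{K}$ into a polynomial identity in $\eta,\bar{\eta}$ with $z$-dependent coefficients and match the coefficient of $\eta^i\bar{\eta}^j$, after which the trace over $g^{\bar{j}l}$ gives $\partial_i\mathcal{K}=2\partial_i\mathcal{K}$ because $\dim_{\mathbb{C}}M=2$. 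This is more elementary and sidesteps the paper's second covariant differentiation entirely; its only cost is that it is tied to the purely Hermitian situation (the coefficient comparison would indeed fail if $g_{i\bar{j}}$ depended on $\eta$), but as you correctly note, the hypothesis $\mathcal{K}\neq 0$ together with Corollary 4.6 guarantees exactly that reduction, so the argument is sound as stated.
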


\begin{proof}
If $\mathcal{K}$ is a constant on $(M,F)$ then, by (\ref{3.3'}) it results $%
l^kl_nT_{kl}^n=0$ which leads to $T_{kl}^n=0,$ i.e. $(M,F)$ is K\"{a}hler.

Conversely, if $(M,F)$ is K\"{a}hler then by (\ref{3.3'}) we have $\mathcal{K%
}_{|l}=\frac 1F\mathcal{K}_{|0}l_l.$ Hence $\mathcal{K}_{|l}m^l=0.$ On the
other hand, using the identity ii) of Proposition 2.2 it results $0=%
\mathcal{K}|_{\bar{k}|j}=\mathcal{K}_{|j}|_{\bar{k}}.$ Therefore,

$0=\mathcal{K}_{|l}|_{\bar{r}}=-\frac 1{2L}\mathcal{K}_{|0}l_ll_{\bar{r}%
}+\frac 1F\mathcal{K}_{|0}|_{\bar{r}}l_l-\frac 1{2L}\mathcal{K}_{|0}l_ll_{%
\bar{r}}$ and so

$-\frac 1L\mathcal{K}_{|0}l_ll_{\bar{r}}=0$ which gives $\mathcal{K}_{|0}=0.$
It follows $\mathcal{K}_{|l}=0$ equivalently with $\frac{\partial \mathcal{K}%
}{\partial z^l}=0,$ i.e. $\mathcal{K}$ is a constant on $(M,F).$
\end{proof}

Now, we study the space with $|A|\neq 0$ and $\mathcal{K}=0$ for any $|B|.$
By (\ref{3.1}) results $R_{\bar{r}j\bar{h}k}=0,$ and so the horizontal
holomorphic sectional curvature in any direction vanishes identically.
Moreover, the identity (\ref{3.2}) become

\begin{equation}
\Xi _{\bar{r}j\overline{h}l|k}+P_{\bar{r}j\bar{s}k}P_{\bar{0}l\overline{h}}^{%
\overline{s}}=0,  \label{12}
\end{equation}
which contracted by $\bar{m}^r\eta ^j\bar{\eta}^h\eta ^km^l$ and $\bar{m}%
^r\eta ^j\bar{m}^hm^km^l$ leads to $\Phi =\Omega =0,$ which are globally.
Therefore, by Lemma 4.1 we obtain

\begin{theorem}
If $(M,F)$ is a connected 2 - dimensional complex Finsler space with $R_{%
\bar{r}j\bar{h}k}=0,$ $|A|\neq 0,$ then $\dot{\partial}_{\bar{h}}G^i=0.$
\end{theorem}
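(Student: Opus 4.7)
\medskip

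\noindent
\textbf{Proof plan.} By Lemma 4.1, the statement $\dot{\partial}_{\bar{h}}G^i=0$ is equivalent to $\Phi=0$, where $\Phi := A_{|\bar{0}}+AF(\bar{J}+\bar{Y})$ (its conjugate $\bar{A}_{|0}+F\bar{A}(J+Y)$ is exactly the quantity whose vanishing Lemma 4.1 uses). So the whole task reduces to extracting $\Phi=0$ from the hypothesis $R_{\bar{r}j\bar{h}k}=0$ together with $|A|\neq 0$.

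Under the assumption $R_{\bar{r}j\bar{h}k}=0$ every $R$-factor in the Bianchi identity (\ref{3.2}) drops out. Indeed $R_{\bar{r}j\bar{h}k}|_l$ vanishes because $R_{\bar{r}j\bar{h}k}\equiv 0$, while $S_{\bar{r}j\bar{s}l}R_{\bar{0}k\bar{h}}^{\bar{s}}$ and $R_{\bar{r}j\bar{h}n}C_{kl}^n$ vanish for the same reason. Hence (\ref{3.2}) collapses to the concise identity (\ref{12}),
\[
\Xi_{\bar{r}j\bar{h}l|k}+P_{\bar{r}j\bar{s}k}P_{\bar{0}l\bar{h}}^{\bar{s}}=0.
\]

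The next step is to contract (\ref{12}) with $\bar{m}^r\eta^j\bar{\eta}^h\eta^k m^l$. On the $PP$ factor, using the closed form (\ref{2.8''}) for $P_{\bar{r}0\bar{h}k}$ and the Berwald-frame identities $l_k\eta^k=F$, $m_k\eta^k=0$, $m^im_i=1$, one finds $P_{\bar{r}0\bar{s}k}\eta^k\bar{m}^r=-F\bar{\Phi}m_{\bar{s}}$ and similarly $P_{\bar{0}l\bar{h}}^{\bar{s}}\bar{\eta}^h m^l$ reduces to a multiple of $\Phi m^{\bar{s}}$; together the $PP$-term becomes a scalar multiple of $|\Phi|^{2}m_l$. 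On the $\Xi$ factor, one computes $\Xi_{\bar{r}j\bar{h}l}\bar{m}^r\eta^j\bar{\eta}^h m^l$ from (\ref{2.8'''}) and then takes its $h$-covariant derivative contracted with $\eta^k$, using the frame derivatives (\ref{2.4''})--(\ref{2.4'''}); this produces exactly a multiple of $\Phi_{|0}-F(J+Y)\Phi$. Combining the two and using $|A|\neq 0$ (which guarantees that a nontrivial factor of $\bar{A}$ that appears in the balance is nonzero), one concludes $\Phi=0$. The companion contraction by $\bar{m}^r\eta^j\bar{m}^h m^k m^l$ gives the analogous identity for $\Omega := A_{|\bar{k}}m^{\bar{k}}+A(\bar{V}+\bar{H})$, yielding $\Omega=0$ as a by-product; only $\Phi=0$ is needed to invoke Lemma 4.1.

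The main obstacle is bookkeeping in the $\Xi_{\bar{r}j\bar{h}l|k}$ computation: the coefficient scalars $A,B,J,Y,V,H$ are only pseudo-scalars, and the $h$-covariant derivative $|_k$ must be distributed over both these coefficients and the frame covectors $l_i$, $m_i$ via (\ref{2.5''}). Once this is tracked carefully, the two contractions yield a pair of purely algebraic relations whose combination forces $\Phi=0$, and the theorem then follows from Lemma 4.1.
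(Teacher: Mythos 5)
Your overall route is exactly the paper's: reduce the claim via Lemma 4.1 to showing $\Phi=0$, observe that $R_{\bar{r}j\bar{h}k}=0$ collapses the Bianchi identity (\ref{3.2}) to (\ref{12}), and then contract (\ref{12}) suitably. The $PP$-part of your computation is also right: $P_{\bar{r}0\bar{s}0}\bar{m}^r=-F\bar{\Phi}\,m_{\bar{s}}$ by (\ref{2.8''}), so the contraction of $P_{\bar{r}j\bar{s}k}P_{\bar{0}l\overline{h}}^{\overline{s}}$ by $\bar{m}^r\eta^j\bar{\eta}^h\eta^k m^l$ is $F^2|\Phi|^2$ up to sign.

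The flaw is in your account of the $\Xi$-term and, consequently, in how you close the argument. Under the contraction with $\eta^j$ the term $\Xi_{\bar{r}j\overline{h}l|k}\eta^j$ vanishes identically: by (\ref{2.8'''}) one has $\Xi_{\bar{r}j\overline{h}l}\propto m_jm_l$, and $m_j\eta^j=Fm_jl^j=0$, while $\eta^j_{|k}=0$ by (\ref{1.3.}), so $\Xi_{\bar{r}j\overline{h}l|k}\eta^j=(\Xi_{\bar{r}j\overline{h}l}\eta^j)_{|k}=0$. The quantity $\Phi_{|0}-F(J+Y)\Phi$ that you attribute to this term actually arises from the \emph{other} contraction, $\bar{\eta}^rm^j\bar{\eta}^h\eta^k$ (as in Proposition 4.12 i)), in which it is the $PP$-term that drops out. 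Because of this confusion, your concluding step --- ``combining'' a multiple of the first-order expression $\Phi_{|0}-F(J+Y)\Phi$ with a multiple of $|\Phi|^2$ and invoking $|A|\neq 0$ --- is a non sequitur: a relation of the form $c_1\bigl(\Phi_{|0}-F(J+Y)\Phi\bigr)+c_2|\Phi|^2=0$ does not force $\Phi=0$ for any nonzero $c_1,c_2$. The correct (and simpler) conclusion is that the contraction of (\ref{12}) by $\bar{m}^r\eta^j\bar{\eta}^h\eta^k m^l$ consists of the $PP$-term alone and reads $F^2|\Phi|^2=0$, whence $\Phi=0$ outright; the hypothesis $|A|\neq 0$ plays no role in this step (it only serves to place the space in the non--purely-Hermitian branch of Corollary 4.7). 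The companion contraction giving $\Omega=0$ is as you say, and is likewise a pure $|\Omega|^2=0$ identity.
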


\begin{remark}
The converse of above Theorem is not true. There exists $2$ - dimensional
complex Finsler spaces with $\dot{\partial}_{\bar{h}}G^i=0$ and $R_{\bar{r}j%
\bar{h}k}\neq 0.$ A such example is given by the Antonelli - Shimada metric (%
\ref{IV.9}).
\end{remark}

Now, using Theorem 4.4 we have proven

\begin{corollary}
If $(M,F)$ is a connected 2 - dimensional weakly K\"{a}hler complex Finsler
space with $R_{\bar{r}j\bar{h}k}=0,$ $|A|\neq 0,$ then it is Berwald.
\end{corollary}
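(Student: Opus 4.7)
The plan is to deduce this corollary by stringing together two already-established results, so no new computation is required; the content of the corollary is that the two hypotheses of Theorem 4.4 can now be verified.

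First, I would invoke Theorem 4.15, which was just proved using the Bianchi identity contracted against $\bar{m}^r\eta^j\bar{\eta}^h\eta^k m^l$ and $\bar{m}^r\eta^j\bar{m}^h m^k m^l$ to force $\Phi=\Omega=0$, and then Lemma 4.1 to convert $\Phi=0$ into the spray condition. Since the hypotheses $R_{\bar{r}j\bar{h}k}=0$ and $|A|\neq 0$ are exactly those of Theorem 4.15, I immediately obtain $\dot{\partial}_{\bar{h}}G^i=0$.

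Second, I would combine this with the assumption that $(M,F)$ is weakly Kähler. Theorem 4.4 asserts that a $2$-dimensional complex Finsler space is Berwald if and only if it is weakly Kähler and $\dot{\partial}_{\bar{h}}G^i=0$. Both conditions are now in hand, so the sufficiency direction of Theorem 4.4 delivers the Berwald conclusion.

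I do not expect any obstacle here; the corollary is essentially a bookkeeping consequence, and its interest lies in showing that the apparently stronger hypothesis $R_{\bar{r}j\bar{h}k}=0$ (together with $|A|\neq 0$) upgrades weakly Kähler all the way to Berwald in dimension two, complementing Theorem 4.4 by replacing its spray hypothesis with a curvature hypothesis. The one small thing to check implicitly is that the results invoked are indeed globally valid (not just in the fixed chart used to define the local Berwald frame), but this has already been verified within the proofs of Theorems 4.4 and 4.15.
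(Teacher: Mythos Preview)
Your proposal is correct and matches the paper's own proof exactly: the paper simply writes ``Now, using Theorem 4.4 we have proven'' immediately after Theorem 4.15, which is precisely the two-step chain you describe (Theorem 4.15 gives $\dot{\partial}_{\bar{h}}G^i=0$, then the sufficiency direction of Theorem 4.4 yields Berwald).
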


\textbf{Acknowledgment: }This paper is supported by the Sectorial
Operational Program Human Resources Development (SOP HRD), financed from the
European Social Fund and by Romanian Government under the Project number
POSDRU/89/1.5/S/59323.

\begin{flushleft}

Transilvania Univ.,
Faculty of Mathematics and Informatics

Iuliu Maniu 50, Bra\c{s}ov 500091, ROMANIA

e-mail: nicoleta.aldea@lycos.com

e-mail: gh.munteanu@unitbv.ro
\end{flushleft}

\end{document}